\documentclass[11pt]{article}
\usepackage{amssymb}
\usepackage{fullpage,subfigure}
\usepackage{hyperref}
\usepackage{graphicx,amsmath,amsfonts,amscd,amssymb,bm,cite,epsfig,epsf,url,color}
\usepackage{amsthm}
\usepackage{array} 
\usepackage{fullpage} 
\usepackage[small,bf]{caption}
\usepackage{bbm}
\usepackage{pgfplots}
\usetikzlibrary{spy}
\setlength{\captionmargin}{30pt}

\newtheorem{theorem}{Theorem}[section]
\newtheorem{corollary}[theorem]{Corollary}
\newtheorem{lemma}[theorem]{Lemma}
\newtheorem{proposition}[theorem]{Proposition}
\newtheorem{definition}[theorem]{Definition}


%

\newcommand{\R}{\mathbb{R}}
\newcommand{\C}{\mathbb{C}}

\newcommand{\Z}{\mathbb{Z}}
\newcommand{\<}{\langle}
\renewcommand{\>}{\rangle}

\newcommand{\norm}[1]{{\left\lVert{#1}\right\rVert}}



\newcommand{\Id}{\text{\em I}}

\numberwithin{equation}{section}

\newcommand{\normInf}[1]{\left|\left| #1 \right|\right| _{\infty}}
\newcommand{\normTwo}[1]{\left|\left| #1 \right|\right| _{2}}
\newcommand{\normOneTwo}[1]{\left|\left| #1 \right|\right| _{\ell_{1}/\ell_{2}}}
\newcommand{\normOne}[1]{\left|\left| #1 \right|\right| _{1}}

\newcommand{\normInfInf}[1]{\left|\left| #1 \right|\right| _{\infty}}

\newcommand{\normTV}[1]{\left|\left| #1 \right|\right| _{\text{TV}}}

\newcommand{\normgTV}[1]{\left|\left| #1 \right|\right| _{\text{gTV}}}

\newcommand{\abs}[1]{\left| #1 \right|}
\newcommand{\keys}[1]{\left\{ #1 \right\}}
\newcommand{\sqbr}[1]{\left[ #1 \right]}
\newcommand{\brac}[1]{\left( #1 \right) }
\newcommand{\MAT}[1]{\begin{bmatrix} #1 \end{bmatrix}}

\newcommand{\PROD}[2]{\left \langle #1, #2\right \rangle}

\newcommand{\derTwo}[2]{\frac{\text{d}^2#2}{\text{d}#1^2}}

\newcommand{\Real}[1]{\text{Re}\brac{ #1 }}

\pagestyle{plain}

\parindent = 0 pt
\parskip = 6 pt

\newcommand{\Set}{S}
\newcommand{\regpar}{\eta}
\newcommand{\fc}{f_c}
\newcommand{\fct}{f_c  \, t}
\newcommand{\fmin}{f_\text{\text{min}}}
\newcommand{\fmineq}{f_\text{\text{\em min}}}

\newcommand{\Deltamin}{\Delta_{\text{min}}}
\newcommand{\Deltaminth}{\Delta_{\text{\em min}}}
\newcommand{\taumin}{\tau_{\text{min}}}
\newcommand{\tauminth}{\tau_{\text{\em min}}}
\newcommand{\optvalue}{1.26}

\newcommand{\minfc}{10^{3}}
\newcommand{\taumiddle}{0.288316}
\newcommand{\taumiddleSecondDer}{0.110497}

\newcommand{\jnear}{20}
\newcommand{\jnearpone}{21}
\newcommand{\jzero}{400}

\newcommand{\gammaOne}{0.247}
\newcommand{\gammaTwo}{0.339}
\newcommand{\gammaThree}{0.414}
\newcommand{\xLR}{x_{\text{LR}}}

\author{Carlos
  Fernandez-Granda\thanks{Courant Institute of Mathematical Sciences and Center for Data Science,
    NYU, New York City NY}}

\title{Super-Resolution of Point Sources via Convex Programming}

\date{July 2015; Revised December 2015}

\begin{document}

\maketitle


\begin{abstract}
We consider the problem of recovering a signal consisting of a superposition of point sources from low-resolution data with a cut-off frequency $\fc$. If the distance between the sources is under $1/\fc$, this problem is not well posed in the sense that the low-pass data corresponding to two different signals may be practically the same. We show that minimizing a continuous version of the $\ell_1$ norm achieves exact recovery as long as the sources are separated by at least $ \optvalue / \fc$. The proof is based on the construction of a \emph{dual certificate} for the optimization problem, which can be used to establish that the procedure is stable to noise. Finally, we illustrate the flexibility of our optimization-based framework by describing extensions to the demixing of sines and spikes and to the estimation of point sources that share a common support. 


\end{abstract}

{\bf Keywords.} Super-resolution, line-spectra estimation, convex optimization, dual certificates, sparse recovery, overcomplete dictionaries, group sparsity, multiple measurements.

\section{Introduction}
\label{sec:intro}
Extracting fine-scale information from low-resolution data is a major challenge in many areas of the applied sciences. In microscopy, astronomy and any other application employing an optical device, spatial resolution is fundamentally limited by diffraction~\cite{superres_survey}. 
Figure~\ref{fig:superres_diagram} illustrates a popular model for the data-acquisition process in such cases: the object of interest is convolved with a point-spread function that blurs the fine-scale details, acting essentially as a low-pass filter. The problem of super-resolution is that of reconstructing the original image from the blurred measurements. 
An analogous challenge often arises in signal-processing: estimating the spectrum of a signal from a finite number of samples. Truncating the signal in the time domain limits the spectral resolution, as shown in the lower half of Figure~\ref{fig:superres_diagram}. \emph{Spectral super-resolution}, or equivalently line-spectra estimation, is the problem of recovering the spectrum of the original signal from the truncated data. 

\begin{figure}
\hspace{1.5cm}
\begin{tabular}{
>{\centering\arraybackslash}m{0.06\linewidth} >{\centering\arraybackslash}m{0.28\linewidth}   >{\centering\arraybackslash}m{0.28\linewidth} }
  && Spectrum (real part) \vspace{0.2cm}\\
 Signal & \begin{tikzpicture}[scale=0.7]
\begin{axis}[ticks=none]
\addplot+[ycomb,mark=none, very thick,blue]
file {data_x.dat};
\addplot[black] coordinates {(-8,0) (10,0)};
\end{axis}
\end{tikzpicture}  
&
\begin{tikzpicture}[scale=0.7]
\begin{axis}[ticks=none]
\addplot[darkgray,thick]
file {data_x_spect_real.dat};
\addplot[black] coordinates{(-2,0) (2,0)};
\end{axis}
\end{tikzpicture}
\\
Data & \begin{tikzpicture}[scale=0.7]
\begin{axis}[ticks=none]
\addplot[blue, thick] file {data_x_lowres.dat};
\addplot[black] coordinates
{(-8,0) (10,0)};
\end{axis}
\end{tikzpicture}
&
\begin{tikzpicture}[scale=0.7]
\begin{axis}[ticks=none]
\addplot[ darkgray, thick]
 file {data_x_spect_lowres_real.dat};
\addplot[black] coordinates{(-2,0) (2,0)};
\end{axis}
\end{tikzpicture}
\end{tabular}
\begin{center}
\textbf{Spatial super-resolution}
\end{center}
\hspace{1.5cm}
\begin{tabular}{
>{\centering\arraybackslash}m{0.06\linewidth} >{\centering\arraybackslash}m{0.28\linewidth}   >{\centering\arraybackslash}m{0.28\linewidth} }
  && Spectrum (magnitude) \vspace{0.2cm}\\
 Signal & 
\begin{tikzpicture}[scale=0.7]
\begin{axis}[ticks=none]
\addplot[blue, thick]
 file {data_long.dat};
\addplot[black] coordinates{(-60,0) (60,0)};
\end{axis}
\end{tikzpicture}
&
\begin{tikzpicture}[scale=0.7]
\begin{axis}[ticks=none]
\addplot+[ycomb,mark=none, very thick,darkgray]
 file {data_spectrum.dat};
\addplot[black] coordinates
{(-0.5,0) (0.5,0)};
\end{axis}
\end{tikzpicture}  
\\
Data &
\begin{tikzpicture}[scale=0.7]
\begin{axis}[ticks=none]
\addplot[blue, thick]
 file {data_trunc.dat};
\addplot[black] coordinates{(-60,0) (60,0)};
\end{axis}
\end{tikzpicture}
&
 \begin{tikzpicture}[scale=0.7]
\begin{axis}[ticks=none]
\addplot[darkgray, thick] file {data_spectrum_trunc.dat};
\addplot[black] coordinates
{(-0.5,0) (0.5,0)};
\end{axis}
\end{tikzpicture}
\end{tabular}
\begin{center}
\textbf{Spectral super-resolution}
\end{center}
\caption{Schematic illustration of spatial and spectral super-resolution.}
\label{fig:superres_diagram}
\end{figure}

By \emph{super-resolution} we mean the inverse problem of estimating a signal from low-resolution measurements, but the term may have other meanings in different contexts. In optics, it often refers to the problem of overcoming the diffraction limit by modifying the data-acquisition mechanism~\cite{superres_survey}. In image-processing, it denotes the problem of upsampling an image onto a finer grid while preserving its edge structure and hallucinating high-frequency textures in a reasonable way~\cite{book_milanfar}. To be clear, in this work we focus on \emph{recovering} the lost fine-scale features \emph{without altering} the low-pass sensing process. 

In order to super-resolve a signal it is necessary to leverage some prior knowledge about its structure. Otherwise the problem is hopelessly ill posed, since the missing spectrum can be filled in arbitrarily to produce estimates that correspond to the data. Here, we consider signals that may be represented as superpositions of point sources, such as celestial bodies in astronomy~\cite{ghez_astronomy}, neuron spikes in neuroscience~\cite{rieke_spikes} or line spectra in signal processing and spectroscopy~\cite{lajunen_spectroscopy, linespectra_astronomy}. In addition, locating pointwise fluorescent probes is a crucial step in some optical super-resolution procedures capable of handling more complicated objects, such as photoactivated localization microscopy (PALM)~\cite{palm,fpalm} or stochastic optical reconstruction microscopy (STORM)~\cite{storm}.

At an abstract level, the deconvolution of point sources or spikes from bandlimited data is an instance of a central question in modern data processing: how to recover a low-dimensional object embedded in a high-dimensional space from incomplete linear measurements. Nonparametric techniques based on convex optimization have had great success in tackling problems of this flavor. Notable examples include sparse regression in high-dimensional settings~\cite{lasso}, compressed sensing~\cite{candesRandProj,cs_donoho} and matrix completion~\cite{mc_candes}. 
The interest of developing optimization-based methods for super-resolution lies in their robustness to noise and in their flexibility to account for different structural assumptions on the signal, noise and measurement model. In recent work, convex programming has been shown to recover a superposition of point sources exactly from bandlimited data, as long as the sources are separated by a minimum distance of $2/\fc$, where $\fc$ is the cut-off frequency of the sensing process~\cite{superres}. Subsequent publications~\cite{robust_sr, support_detection, azais2015spike, tang_minimax, venia_positive} have established that the method is robust to noise in non-asymptotic regimes. 

The goal of the present paper is to further develop this line of research through two main contributions: 
\begin{itemize}
\item We establish that it is possible to super-resolve signals with minimum separations above $ \optvalue /\fc$ via convex programming. Section~\ref{sec:superres_point_sources} provides the context for this result by describing the basic super-resolution problem and our optimization-based approach. The proof, which  is based on the construction of a novel \emph{dual certificate} that also allows to extend previous stability results, is presented in Section~\ref{proof:noiseless}.
\item In Section~\ref{sec:convex_framework} we illustrate the flexibility of our approach by adapting it to two related problems: demixing of sines and spikes and super-resolution of multiple signals that share a common support. In each case we propose a optimization program tailored to the problem, discuss how to solve it, analyze its optimality conditions and provide some numerical simulations.
\end{itemize}

\section{Super-resolution of point sources}
\label{sec:superres_point_sources}
\subsection{Basic model}
\label{sec:noiseless}
We model a superposition of point sources as a sum of weighted Dirac measures supported on a subset $T$ of the unit interval
\begin{equation}
\label{eq:model}
  x := \sum_{t_j \in T} a_j \delta_{t_j}, 
\end{equation}
where $\delta_{\tau}$ is a Dirac measure at $\tau$ and the amplitudes $a_j$ may be complex valued. We study the problem of estimating such a signal from low-resolution measurements which correspond to the convolution between the signal and a low-pass point spread function (PSF) $\phi$,
\begin{align}
\label{eq:xLR}
\xLR \brac{t} := \phi \ast x \brac{t} = \sum_{t_j \in T} a_j \phi\brac{t-t_j},
\end{align}
as illustrated at the top of Figure~\ref{fig:superres_diagram}. If the cut-off frequency of the PSF is equal to $\fc$, in the frequency domain the measurements are of the form
\begin{align*}
\mathcal{F} \xLR = \mathcal{F}  \phi \; \mathcal{F} x  = \widehat{\phi} \; \Pi_{\sqbr{-\fc,\fc}} \, \mathcal{F}  x  ,
\end{align*}
where $\mathcal{F} f $ denotes the Fourier transform of a function or measure $f$ and $\Pi_{\sqbr{-\fc,\fc}}$ is an indicator function that is zero out of the interval $\sqbr{-\fc,\fc}$. For ease of exposition, we assume that the Fourier transform of the PSF is constant over $\sqbr{-\fc,\fc}$, i.e. the PSF is a periodized sinc or Dirichlet kernel, but our results hold for any PSF with a known low-pass spectrum. Since the support of $\xLR$ is restricted to the unit interval, it follows from the sampling theorem that its spectrum is completely determined by the discrete samples
\begin{equation} 
 \label{eq:fourier}
  y(k) = \mathcal{F}  \xLR (k)= \int_0^1 e^{-i2\pi kt} x(\text{d}t)  = \sum_j a_j e^{-i2\pi k
    t_j}, \quad k \in \Z, \, \abs{k}\leq \fc, 
\end{equation}
where we assume for simplicity that $\fc$ is an integer. In a more compact form, the sensing process can be represented as 
\begin{equation}
\label{eq:model_matrix_form}
y = \mathcal{F}_{n} \, x
\end{equation}
where $y\in \C^n$ and $\mathcal{F}_{n}$ is the linear operator that maps a measure or function to its lowest $n := 2\fc + 1$ Fourier coefficients.

If the signal $x$ is used to model a superposition of line spectra, equation~\eqref{eq:model_matrix_form} has a very natural interpretation: the data $y$ correspond to a finite number of samples of the signal in the time domain. As sketched in the lower half of Figure~\ref{fig:superres_diagram}, truncating the signal in the time domain is equivalent to convolving its spectrum with a sinc function. Our model can consequently be applied directly to \emph{spectral super-resolution}, where the aim is to estimate sparse line spectra from time-domain samples.

\subsection{Minimum separation}
\label{sec:minimum_separation}
In contrast to compressed sensing, where randomized measurements preserve the energy of arbitrary sparse signals with high probability (this is commonly known as the \emph{restricted isometry property}~\cite{candes2005decoding}), sparsity is not a strong enough prior to ensure that the super-resolution problem is well posed. Indeed, low-pass filtering may suppress sparse signals almost entirely if their support is too clustered together. As a result, in order to derive meaningful guarantees for super-resolution it is necessary to impose conditions on the signal support. To this end, we define the \emph{minimum separation} of the support of a signal, as introduced in~\cite{superres}.

\begin{definition}[Minimum separation] Let $\mathbb{T}$ be the circle obtained by identifying the endpoints on $[0,1]$. For a family of points $T \subset  \mathbb{T}$, the minimum separation (or minimum distance) is defined as the closest distance between any two elements from $T$,
  \begin{equation}
    \label{eq:min_distance}
    \Delta(T) = \inf_{(t, t') \in T \, : \, t \neq t'} \, \, |t - t'|. 
  \end{equation} 
where $|t - t'|$ is the $\ell_\infty$ distance (maximum deviation in any coordinate). To be clear, this is the wrap-around distance so that the distance between $t = 0$ and $t' = 3/4$ is equal to $1/4$.
\end{definition}

If the minimum distance is too small with respect to the cut-off frequency of the data, it may become impossible to estimate the signal even under very small perturbations to the data. A fundamental limit in this sense is $\lambda_c := 1 / \fc$, the inverse of the cut-off frequency, which also corresponds to the width of the main lobe of the point-spread function $\phi$. The reason is that for minimum separations just below $\lambda_c/ 2$ there exist signals that lie \emph{almost} in the null space of the low-pass operator defined by~\eqref{eq:model_matrix_form}\footnote{$\lambda_c / 2$ is the notorious Rayleigh resolution limit~\cite{dekker_survey}, below which it is challenging to even distinguish two neighboring sources.}. If such a signal $d$ corresponds to the difference between two different signals $s_1$ and $s_2$ so that $s_1 - s_2 = d$, it will be very challenging to distinguish $s_1$ and $s_2$ from low-resolution data. Figure~\ref{fig:superres_illposed} illustrates this: the measurements corresponding to two signals with disjoint supports and a minimum distance of $0.9 \lambda_c$ for $\fc = 10^{3}$ are indeed almost indistinguishable. The phenomenon can be characterized theoretically in an asymptotic setting using Slepian's prolate-spheroidal sequences~\cite{slepian} (see also Section 3.2 in \cite{superres}). More recently, Theorem 1.3 of~\cite{moitra_superres} provides a non-asymptotic analysis.  Finally, other works have obtained lower bounds on the minimum separation necessary for convex-programming approaches to succeed~\cite{tang_resolution,peyreduval}.


\begin{figure}
\hspace{1.5cm}
\begin{tabular}{
>{\centering\arraybackslash}m{0.09\linewidth} >{\centering\arraybackslash}m{0.28\linewidth}   >{\centering\arraybackslash}m{0.28\linewidth} }
  && $\: \quad$ Spectrum (magnitude) \vspace{0.2cm}\\
 Signals & 
\includegraphics{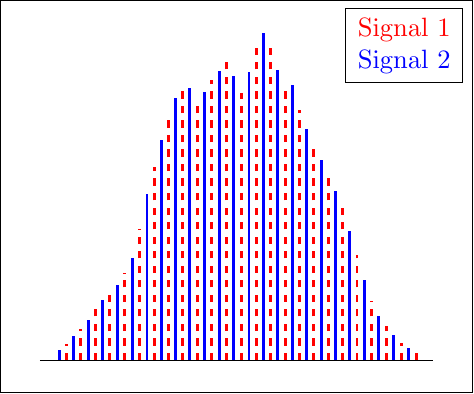}
&
\includegraphics{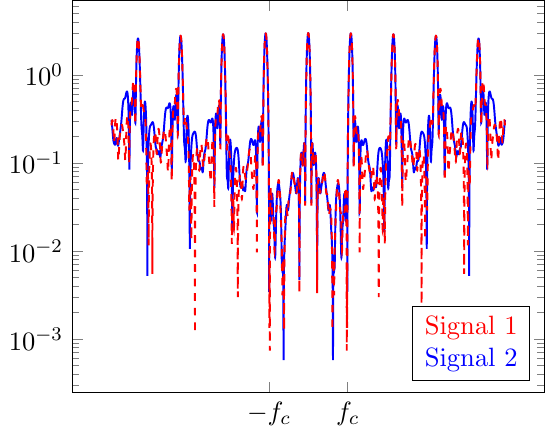}
\\
Difference & 
\includegraphics{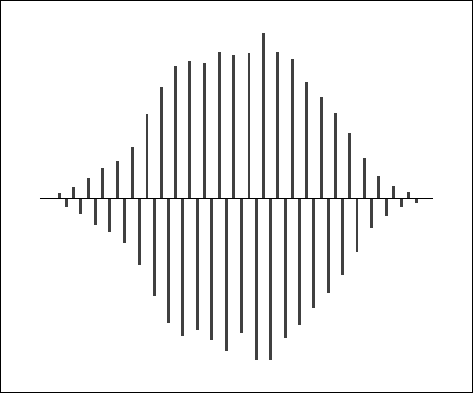}
&
\includegraphics{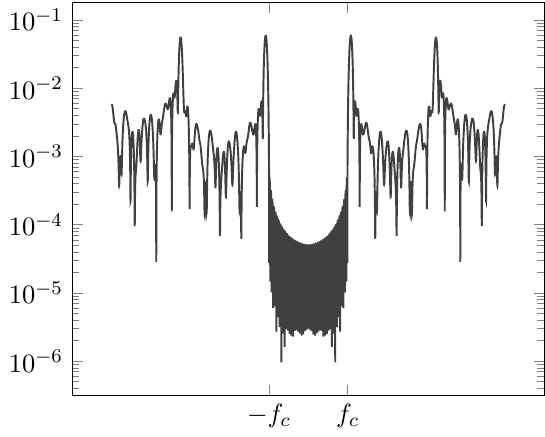}
\\
Data & 
\includegraphics{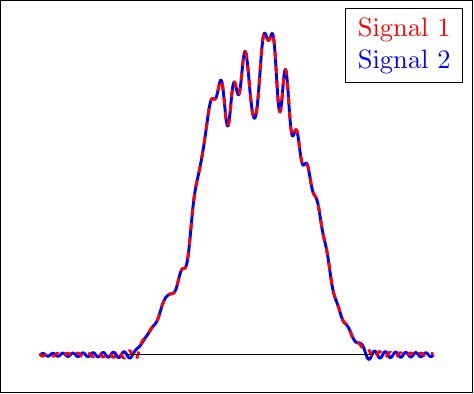}
&
\includegraphics{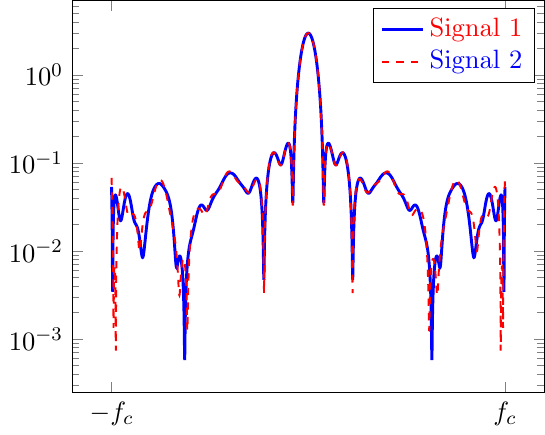}
\end{tabular}
\caption{Two signals with disjoint supports that satisfy the minimum-separation condition for $\Delta \brac{T} = 0.9 \fc$ when $\fc = 10^3$ (top left) and their spectrum (top right). Their difference (center left) has a spectrum that is concentrated away for the low-pass band between $-\fc$ and $\fc$ (center right). As a result, it is very difficult to the data corresponding to the two signals (bottom).}
\label{fig:superres_illposed}
\end{figure}

\subsection{Optimization-based super-resolution}
\label{sec:optimization_based}
Our approach to super-resolution is based on optimization: we estimate the signal by minimizing a sparsity-inducing norm. Since we are interested in point sources that may be supported at arbitrary locations within a continuous interval, we resort to a continuous counterpart of the $\ell_1$ norm known as the total-variation norm\footnote{The term \emph{total variation} may also refer to the $\ell_1$ norm of the discontinuities of a piecewise-constant function, which is a popular regularizer in image processing and other applications~\cite{tv}.}. If we consider the space of measures supported on the unit interval, this norm is dual to the infinity norm (see Section A in the appendix of~\cite{superres} for a different definition), so that for a measure $x$ we have
\begin{align*}
\normTV{x} = \sup_{\normInf{f}\leq 1,f \in C\brac{\mathbb{T}}} \operatorname{Re} \sqbr{  \int_{\mathbb{T}}\overline{f \brac{t}} x\brac{\text{d}t}}.
\end{align*}
For a superposition of Dirac deltas $\sum_{j}a_j\delta_{t_j}$, the total-variation norm is equal to the $\ell_1$ norm of the coefficients, i.e. $\normTV{x}=\sum_{j}\abs{a_j}$. Our super-resolution method consists in minimizing the total-variation norm of the estimate subject to data constraints, as proposed in~\cite{superres},
\begin{align}
\label{eq:TVnormMin}
\min_{\tilde x}\normTV{\tilde x} \quad \text{subject to} \quad
\mathcal{F}_{n} \, \tilde x = y, 
\end{align}
where the minimization is carried out over the set of all finite complex measures $\tilde x$ supported on $[0,1]$. Section~\ref{sec:implementation} discusses how to solve this optimization problem. 

\begin{figure}
\begin{tabular}{
>{\centering\arraybackslash}m{0.3\linewidth} >{\centering\arraybackslash}m{0.3\linewidth}   >{\centering\arraybackslash}m{0.3\linewidth} } 
$\fc = 30$  &  $\fc = 40$ & $ \fc = 50$ \\ 
\includegraphics{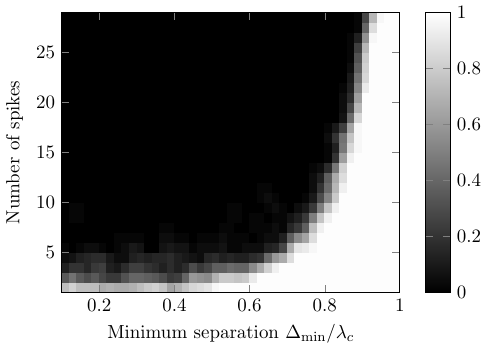} & \includegraphics{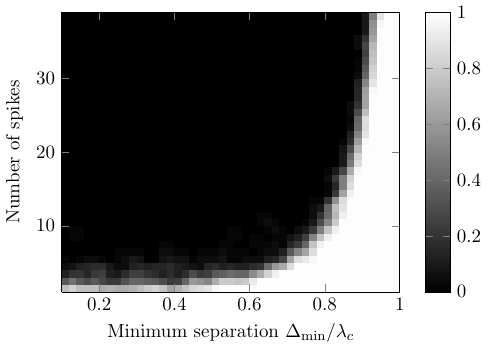} 
& \includegraphics{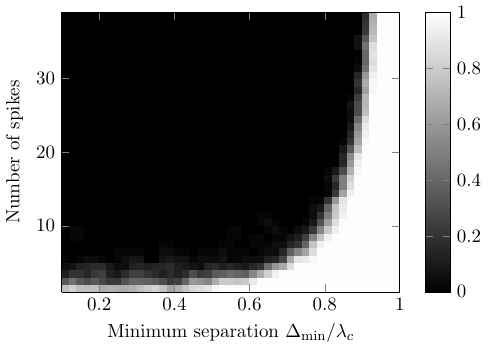} 
\end{tabular}
\caption{Graphs showing the fraction of times Problem~\eqref{eq:TVnormMin} achieves exact recovery over 10 trials with random signs and supports. A phase transition occurs near $\lambda_c$. The simulations are carried out using the high-precision semidefinite-programming solver CVX~\cite{cvx}.}
\label{fig:limits}
\end{figure}

In~\cite{superres} it was established that if the minimum separation of the support of a signal $\Delta(T)$ is greater or equal to $2\lambda_c$, TV-norm minimization achieves exact recovery. However, numerical experiments indicate that the actual limit at which super-resolution via TV-norm regularization may fail is $\lambda_c$. This is shown in Figure~\ref{fig:limits}; see also Section~5 of~\cite{superres}. As explained in Section~\ref{sec:minimum_separation}, $\lambda_c$ is a natural limit in the sense that below that minimum separation the problem may become ill posed. Our main result is that the guarantee for exact recovery can be extended to a minimum separation of just $\optvalue \lambda_c$. 
\begin{theorem}
  \label{theorem:noiseless} 
Let $T = \{t_j\}$ be the support of $x$. If the minimum separation obeys 
\begin{equation}
\label{eq:min-dist}
\Delta(T)  \geq {\optvalue}\, /{\fc} = {\optvalue} {\lambda_c}, 
\end{equation}
then $x$ is the unique solution to~\eqref{eq:TVnormMin}. This holds as long as $f_c \geq \minfc$. 
\end{theorem}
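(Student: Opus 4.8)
The plan is to prove the theorem by exhibiting a \emph{dual certificate}. By standard convex duality for the total-variation norm (cf.~\cite{superres}), it suffices to construct a low-pass trigonometric polynomial
\[
q(t) = \sum_{\abs{k}\leq \fc} c_k\, e^{i2\pi k t},
\]
equivalently a $q$ in the range of the adjoint $\mathcal{F}_{n}^{\ast}$, that interpolates the sign pattern on the support, $q(t_j) = \sign{a_j}$ for every $t_j\in T$, and is strictly bounded off it, $\abs{q(t)} < 1$ for all $t\in\mathbb{T}\setminus T$. Once such a $q$ exists, any optimal $\tilde x$ of~\eqref{eq:TVnormMin} satisfies $\Real{\int_{\mathbb{T}}\overline{q}\,\tilde x(\text{d}t)} = \normTV{\tilde x}$, since $\tilde x$ is feasible, $q = \mathcal{F}_{n}^{\ast} c$, and $\sum_j \overline{\sign{a_j}}\,a_j = \sum_j\abs{a_j}$; because $\normInf{q}\leq 1$ this forces $\tilde x$ to be supported on $\{\abs{q}=1\} = T$ with the prescribed phases, and a finite-dimensional (Vandermonde) argument on $T$ — whose cardinality is below $n$ by~\eqref{eq:min-dist} — then gives $\tilde x = x$. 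So the whole proof reduces to building the certificate under the separation hypothesis~\eqref{eq:min-dist}, and the amplitudes being complex only means the $\sign{a_j}$ are unit-modulus complex numbers and $q$ is complex-valued.

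To build $q$ I would interpolate with a concentrated low-pass kernel. Fix a real, even trigonometric polynomial $K$ of degree $\fc$, normalized with $K(0)=1$, with a sharp peak at the origin and rapidly decaying tails (and correspondingly controlled $K'$, $K''$, $K'''$); the precise choice of $K$ is made to optimize the resulting separation constant, which is where the non-round value $\optvalue$ and the threshold $\fc\geq\minfc$ enter. Then set
\[
q(t) = \sum_{t_j\in T}\alpha_j\, K(t-t_j) \;+\; \sum_{t_j\in T}\beta_j\,\frac{K'(t-t_j)}{\fc},
\]
and impose the $2\abs{T}$ conditions $q(t_j) = \sign{a_j}$ and $q'(t_j) = 0$ for all $j$; the derivative conditions make each $t_j$ a critical point of $\abs{q}$ with value $1$, which is what will let us certify $\abs{q}<1$ nearby. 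These conditions form a linear system $\sMAT{D_0 & D_1 \\ D_1^{\ast} & D_2}\sMAT{\alpha\\\beta} = \sMAT{\sign{a}\\0}$ whose blocks have entries built from $K, K', K''$ evaluated at the pairwise differences $t_j - t_k$. Using $\Delta(T)\geq\optvalue\lambdac$ together with the tail bounds on $K,K',K''$, the off-diagonal mass of each block is small, so every block is a small perturbation of a scaled identity; a Schur-complement / Neumann-series argument then yields invertibility and the quantitative estimates $\alpha_j = \sign{a_j}\brac{1 + O(\varepsilon)}$, $\abs{\beta_j} = O(\varepsilon)$ with an explicit, small $\varepsilon$.

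With the coefficients controlled, the core of the argument is verifying $\abs{q(t)} < 1$ for $t\notin T$, split into a far and a near regime. In the \emph{far} regime (every $t_j$ at distance at least a fixed fraction of $\lambdac$) I would use $\abs{q(t)}\leq \sum_j\abs{\alpha_j}\abs{K(t-t_j)} + \fc^{-1}\sum_j\abs{\beta_j}\abs{K'(t-t_j)}$ and control each sum via the minimum-separation hypothesis and the decay of $K,K'$ (the summands behave essentially geometrically), getting a bound strictly below $1$. In the \emph{near} regime ($t$ within that fraction of some $t_j$) pointwise summation is too lossy; instead, after a global phase rotation making $q(t_j)$ real and positive, one works with $\Real{q}$ and $\Imag{q}$ and argues by Taylor expansion around $t_j$, using $q(t_j)=1$, $q'(t_j)=0$, a quantitative \emph{negative} upper bound on the relevant second derivative, and uniform bounds on the third derivative (hence on $K'', K'''$) over the near region, to conclude $\abs{q}$ stays strictly below $1$ away from $t_j$. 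I expect this near-region estimate — in particular making the second-derivative bound negative with the tight constant $\optvalue$ — to be the main obstacle: it is where the tension between wanting $K$ maximally concentrated and being forced to keep $\deg K\leq\fc$ is sharpest, and it forces a careful, essentially computer-assisted verification of a handful of explicit scalar inequalities on $[0,1/2]$.

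Finally, the hypothesis $\fc\geq\minfc$ would be used to absorb the $O(1/\fc)$ discrepancies between the finite sums defining $K$ and the interpolation system and their integral/limiting counterparts — against which the scalar inequalities of the previous step are actually checked — so that the clean asymptotic estimates dominate the finite-$\fc$ corrections. Combining the far-region bound, the near-region bound, and the interpolation identities then produces a valid certificate $q$, which by the reduction in the first paragraph shows that $x$ is the unique minimizer of~\eqref{eq:TVnormMin}; the quantitative slack in $\abs{q}<1$ is also what underlies the extension of the stability guarantees mentioned in the introduction.
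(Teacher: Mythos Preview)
Your proposal is correct and follows essentially the same route as the paper: reduce to a dual certificate, build $q$ by interpolating with a carefully chosen low-pass kernel and its derivative, invert the $2\abs{T}\times 2\abs{T}$ system via a Schur-complement/Neumann argument under the separation hypothesis, and then verify $\abs{q}<1$ off $T$ by a near/far split with computer-assisted scalar inequalities. The one substantive detail you leave unspecified is the kernel itself: the paper's improvement from $2\lambdac$ to $\optvalue\lambdac$ comes specifically from replacing the squared Fej\'er (Jackson) kernel of~\cite{superres} by a product of three Dirichlet kernels $K_{\gamma}(t)=\prod_{j=1}^{3}K(\gamma_j\fc,t)$ with optimized bandwidths $\gamma=(\gammaOne,\gammaTwo,\gammaThree)$, which gives a better trade-off between concentration at the origin and tail decay; and the near-region bound is obtained not quite by a third-derivative Taylor remainder but by bounding $\derTwo{t}{\abs{q}^2}$ on a fine $\epsilon$-grid and integrating twice (Lemma~\ref{lemma:bound_2der}).
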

Since the signal is assumed to be supported on the unit interval, the result implies that it is possible to recover a number of point sources that is directly proportional to the cut-off frequency.
\subsection{Dual certificate}
\label{sec:dual_cert}
Theorem~\ref{theorem:noiseless} is a direct consequence of Proposition~\ref{prop:dualcert} below, which establishes the existence of a certain subgradient of the TV norm that is orthogonal to the null space of the measurement operator. Such an object is often referred to as a \emph{dual certificate} in the compressed-sensing literature~\cite{candesFreq} because it certifies that exact recovery occurs and its coefficients are a solution to the dual of Problem~\eqref{eq:TVnormMin}. 
\begin{proposition}
\label{prop:dualcert}
Under the conditions of Theorem~\ref{theorem:noiseless} for any sign pattern $v \in \C^{\abs{T}}$, such that $\abs{v_j}=1$ for all $j$, there exists a low-pass trigonometric polynomial
\begin{equation}
\label{eq:q_cond_lowfreq}
q(t) = \sum_{k = -\fc}^{\fc} c_k e^{i2\pi k t} 
\end{equation}
obeying 
\begin{align}
q(t_j) & = v_j, \qquad  t_j \in T, \label{eq:q_cond_interp}\\
|q(t)| & <1, \qquad  t_j \notin T ,\label{eq:q_far}
\end{align}
In addition, there exist numerical constants $C_0 \in \brac{0,1}$, $C_1$ and $C_2$ such that
\begin{align}
 1-C_1 \fc^2 \brac{t-t_j}^2 \leq  |q(t)| & \leq 1-C_2 \fc^2 \brac{t-t_j}^2 , \qquad \abs{t -t_j} \leq C_0 \lambda_c, \quad  t_j \in T. \label{eq:q_quadratic}
\end{align}

\end{proposition}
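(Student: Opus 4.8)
The plan is to adapt the interpolation construction of~\cite{superres}, but with a more favorable low-pass kernel in place of the squared Fej\'er kernel. First I would fix a real, even, low-pass trigonometric polynomial $K(t)=\sum_{|k|\le f_c} g_k\, e^{i2\pi k t}$, normalized so that $K(0)=1$, whose coefficients are chosen via an auxiliary finite-dimensional optimization designed to make $K$ — and its first few derivatives — as concentrated around the origin as possible (equivalently, to decay as fast as possible away from $0$). I would then seek the certificate in the form
\[
q(t) \;=\; \sum_{t_j \in T} \alpha_j\, K(t-t_j) \;+\; \beta_j\, K'(t-t_j),
\]
imposing the interpolation conditions $q(t_j)=v_j$ and $q'(t_j)=0$ for every $t_j\in T$. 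These $2|T|$ equations form a linear system in $(\alpha_j,\beta_j)$; since $K,K',K'',K'''$ decay rapidly and the points of $T$ are separated by at least $1.26\,\lambda_c$, the system matrix is a small perturbation of the block-diagonal matrix with $2\times 2$ blocks $\mathrm{diag}\bigl(K(0),K''(0)\bigr)$ (up to scaling). I would make this quantitative by bounding the off-diagonal contributions through the kernel's tail summed over the well-separated points, then invert via a Neumann series to produce $q$ together with the estimates $\alpha_j = v_j + O(\epsilon)$, $|\beta_j| = O(\epsilon\lambda_c)$, and control on $\sum_j|\alpha_j|$ and $\sum_j|\beta_j|$.

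The core step is then verifying $|q(t)|<1$ for $t\notin T$, together with the two-sided quadratic bound~\eqref{eq:q_quadratic}. I would split the circle into a ``near'' region — the union of short intervals of radius $\approx 0.3\,\lambda_c$ about each $t_j$ — and the complementary ``far'' region. On the near interval around a fixed $t_j$, after replacing $q$ by $\overline{v_j}\,q$ so that the value there is $1$, I would Taylor-expand $|q(t)|^2$ about $t_j$: the constant term is $1$, the linear term vanishes because $q(t_j)=1$ and $q'(t_j)=0$, and the quadratic term equals $-c\, f_c^2(t-t_j)^2$ with an explicit $c>0$ coming from $K''(0)$ and the perturbation bounds, while the cubic remainder is dominated by a uniform bound on $|q'''|$ supplied by the kernel estimates. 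This simultaneously gives $|q(t)|<1$ on the punctured near interval and~\eqref{eq:q_quadratic} for suitable $C_0\in(0,1)$, $C_1$, $C_2$ (taking $C_0\lambda_c$ no larger than the near-interval radius). On the far region I would use $|q(t)| \le \sum_j |\alpha_j|\,|K(t-t_j)| + |\beta_j|\,|K'(t-t_j)|$, bounding the nearest-spike terms explicitly and the remaining sum as a rapidly decaying tail in the ordering of the $t_j$ by distance to $t$, so that the total stays strictly below $1$.

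The main obstacle is that at a separation of only $1.26\,\lambda_c$ there is essentially no margin: the neighbouring lobe of $q$ comes close to grazing the unit circle, so the crude estimates that suffice at $2\,\lambda_c$ fail. Two things must be handled with care. First, the kernel $K$ must be chosen nearly optimally — this is why the construction passes through an auxiliary optimization and why the statement carries explicit numerical constants; in effect the proof reduces $|q(t)|<1$ to verifying a finite list of one-variable polynomial inequalities, on finitely many sub-intervals (classified by how many spikes lie close to $t$), each checked with explicit, rigorously controlled error terms. Second, passing from the exact trigonometric sums defining $q,q',q'',q'''$ to these tractable polynomial bounds requires replacing sums over $|k|\le f_c$ by integrals, which incurs an $O(1/f_c)$ error; this is exactly where the hypothesis $f_c\ge 10^3$ enters, making every such discretization error small enough that the finitely many verified inequalities still close with the stated constants. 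I expect essentially all the difficulty to lie in organizing these near/transition/far estimates so that the accumulated slack is provably positive.
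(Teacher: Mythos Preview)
Your proposal is correct and follows essentially the same approach as the paper: kernel-plus-derivative interpolation, Neumann-series inversion of the $2|T|\times 2|T|$ system via Schur complements, and a near/far decomposition of $\mathbb{T}$ with the near region handled through second-derivative control of $|q|^2$ and the far region through tail sums. The paper's specific kernel is not obtained by a black-box optimization but is the explicit product of three Dirichlet kernels $K_\gamma(t)=\prod_{j=1}^{3}K(\gamma_j f_c,t)$ with $\gamma=(0.247,0.339,0.414)$, all estimates are derived from sharp non-asymptotic bounds on the Dirichlet kernel (rather than sum-to-integral approximations), and the near-region verification bounds $\tfrac{d^2}{dt^2}|q|^2$ on a fine $\epsilon$-grid and integrates twice rather than using a single Taylor step with a cubic remainder --- but these are tactical refinements within exactly the framework you describe.
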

By~\eqref{eq:q_cond_interp} and~\eqref{eq:q_far}, the polynomial is a subgradient of the total-variation norm at the original signal $x$. By~\eqref{eq:q_cond_lowfreq} it is also low pass, which means that it is orthogonal to the null space of the measurement operator, as any signal in the null space is high pass. This immediately implies that for any signal $h$ in the null space 
\begin{align*}
\normTV{x+h} \geq \normTV{x} +\PROD{q}{h} = \normTV{x}.
\end{align*}
The bound on the off-support~\eqref{eq:q_far} actually implies that this inequality is strict, so that $x$ is the unique solution to Problem~\ref{eq:TVnormMin}. A complete proof of the fact that Theorem~\ref{theorem:noiseless} follows from Proposition~\ref{eq:q_cond_lowfreq} is provided in Section A of the appendix of~\cite{superres}. The quadratic bound~\eqref{eq:q_quadratic} is key in establishing robustness guarantees, see~\cite{robust_sr, support_detection}. Section~\ref{proof:noiseless} is devoted to proving Proposition~\ref{prop:dualcert}.

In the case of super-resolution, the dual certificate is a trigonometric polynomial with cut-off frequency $\fc$ that interpolates the sign of the original signal on its support and has magnitude strictly bounded by one on the off-support (see Proposition~\ref{prop:dualcert}). In~\cite{superres}, such a dual certificate is constructed using interpolation with a low-pass kernel and its derivative. The proof of Proposition~\ref{prop:dualcert} generalizes this approach, allowing to optimize the choice of the interpolation kernel by using sharp non-asymptotic bounds on the Dirichlet kernel; we defer the details to Section~\ref{proof:noiseless}.

Finally, we would like to emphasize that the structure of the dual polynomial reveals what signals will be more challenging for the optimization-based procedure. If the support of the signal is cluttered together and the sign of its coefficients varies rapidly, it may not be possible to achieve the interpolation with a bounded low-pass polynomial. In Sections~\ref{sec:dualcert_sinesspikes} and~\ref{sec:dual_cert_commonsupport} we show that similar insights arise when we derive dual certificates for other optimization programs designed to tackle extensions of the basic super-resolution problem.

\subsection{Robustness to noise}
\label{sec:robustness}
In any problem involving real data, it is necessary to account for perturbations and model imperfections. In the case of super-resolution, we can adapt Problem~\eqref{eq:TVnormMin} by using an inequality constraint to quantify the uncertainty,
\begin{align}
\label{eq:TVproblem_relaxed}
\min_{\tilde x} \,  \normTV{\tilde x} 
\quad \text{subject to} \quad \normTwo{\mathcal{F}_{n} \tilde x - y}^2 \leq \delta,
\end{align}
where $\delta$ is an estimate of the noise level. Alternatively, we could also consider a Lagrangian formulation of the form
\begin{align}
\label{eq:TVproblem_lag}
\min_{\tilde x} \,  \normTV{\tilde x} +  \gamma \normTwo{\mathcal{F}_{n} \tilde x - y}^2,
\end{align}
where the regularization parameter $\gamma>0$ governs the tradeoff between data fidelity and the sparsity of the estimate. 

Recent works~\cite{robust_sr, support_detection, azais2015spike, tang_minimax} derive non-asymptotic guarantees on the estimation error achieved when solving these problems to perform super-resolution from noisy data. The proofs of these stability guarantees rely in part on the dual certificate constructed in~\cite{superres} and on generalizations of this construction. As a result, the guarantees only hold under the proviso that the minimum separation is greater or equal to $2 \lambda_c$.

The techniques developed to prove Theorem~\ref{theorem:noiseless}, which are presented in Section~\ref{proof:noiseless}, allow to extend these results to minimum separations of just $\optvalue \lambda_c$. In more detail, Lemma~2.7 in~\cite{robust_sr} constructs a low-pass polynomial $q$ such that $q\brac{t_j}=0$ and $\abs{q'\brac{t_j}}=1$ for all $t_j$ belonging to the support of the original signal (i.e. the polynomial is locally linear). The polynomial is built through interpolation, using the same low-pass kernel as in~\cite{superres}. The construction can consequently be adapted by using the interpolation kernel described in Section~\ref{proof:noiseless}, along with the bounds provided in Section~\ref{sec:kernel}. The same holds for the results in~\cite{support_detection}. In this case, a low-pass polynomial that is equal to one on a certain element of the support and to zero on the rest is used to obtain support-detection guarantees. The polynomial is constructed in Lemma~2.2 of~\cite{support_detection}, again using the same interpolation kernel as in~\cite{superres}. The kernel and bounds described in the present work can be leveraged to build such a polynomial for supports with minimum separations above $\optvalue \lambda_c$. Finally, our techniques can also be used to sharpen the analysis in~\cite{venia_positive}, which studies the super-resolution of point sources with positive amplitudes.

\subsection{Extensions}
\label{sec:extensions}
Apart from the extensions discussed in Section~\ref{sec:convex_framework}, there are several interesting extensions to the basic super-resolution problem. In \textit{compressed sensing off the grid} the aim is to super-resolve a sparse signal from a random subset of its low-pass Fourier coefficients. Exact recovery via convex programming has been shown to occur with high probability for a number of measurements that is proportional to the sparsity level (up to logarithmic factors) as long as the support of the signal has a minimum separation of $2 \lambda_c$~\cite{cs_offgrid}.  This bound can be lowered to $\optvalue \lambda_c$ using the dual-certificate construction in Section~\ref{proof:noiseless}. Our construction also allows to sharpen results pertaining to the recovery of non-uniform splines from their projection onto spaces of algebraic polynomials~\cite{bendory_splines, decastro_splines}. Finally, it is straightforward to apply our results to the recovery of piecewise-constant or piecewise-smooth functions from low-pass data. The details can be found in Section~1.10 of~\cite{superres}. 

\subsection{Implementation}
\label{sec:implementation}
Solving Problem~\eqref{eq:TVnormMin} may seem challenging at first because its primal variable is infinite dimensional. This may be overcome by discretizing the unit interval into a grid and solving an $\ell_1$-norm minimization problem. However, it is also possible to solve the problem exactly without resorting to discretization. A strategy to achieve this is to recast the dual problem as a tractable semidefinite program and then decode the support of the primal variable from the dual solution. The dual problem is
\begin{align}
\label{eq:TV_normMin_dual}
\max_{c\in \C^{n}} \; \<y, c\>  \quad \text{subject to}
\quad \normInf{\mathcal{F}_{n}^{\ast} \, c} \leq 1,
\end{align}
where the inner product is defined as $\<y, c\> : = \operatorname{Re}\brac{y^{\ast}c}$. The dual variable $c$ is finite-dimensional but the constraint is infinite dimensional; the magnitude of the trigonometric polynomial $(\mathcal{F}_{n}^{\ast} \, c)(t) = \sum_{|k| \le \fc} c_k e^{i2\pi k t}$ must be bounded by $1$. Fortunately, this condition has a semidefinite representation provided by the following proposition, which is a consequence of the Fej\'er-Riesz Theorem (see Section~\ref{proof:sdp-charact} of the appendix for the proof and Theorem 4.24 in \cite{dumitrescu} for a more general result).
\begin{proposition}
\label{prop:sdp-charact}
Let $C \in \C^{n \times m}$ and let $C_k$ denote the $k$th column of $C$,
\begin{align*}
\sum_{k=1}^{m}\abs{ (\mathcal{F}_{n}^{\ast} \, C_k)(t) }^2 & \leq 1 \quad \text{for all } t \in \mathbb{T}
\end{align*}
if and only if there exists a Hermitian matrix $\Lambda \in \C^{n \times n}$ obeying
 \begin{equation}
\label{eq:sdp-charact}
   \MAT{\Lambda & C \\ C^{\ast} & \Id } \succeq 0, \qquad \mathcal{T}^{\ast}\brac{\Lambda}= e_1,
\end{equation}
where $e_1$ is the first vector of the canonical basis of $\R^{n}$.
\end{proposition}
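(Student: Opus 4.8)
The plan is to translate both sides of the equivalence into statements about the Hermitian quadratic form $t \mapsto u(t)^{\ast} M u(t)$, where $u(t) \in \C^n$ denotes the exponential vector with entries $u(t)_p = e^{-i 2\pi (p-1) t}$. Up to the unimodular factor $e^{-i 2\pi \fc t}$ (which washes out in the modulus) one has $(\mathcal{F}_n^{\ast} C_k)(t) = e^{-i 2\pi \fc t}\, u(t)^{\ast} C_k$, so that $\sum_{k=1}^m |(\mathcal{F}_n^{\ast} C_k)(t)|^2 = u(t)^{\ast} C C^{\ast} u(t)$. The bookkeeping identity that drives the proof is that, for any Hermitian $M \in \C^{n \times n}$,
\[
u(t)^{\ast} M u(t) \;=\; \sum_{d = -(n-1)}^{n-1} \Big( \sum_{i - j = d} M_{ij} \Big) e^{i 2\pi d t},
\]
i.e. the Fourier coefficients of this trigonometric polynomial are exactly the sums of $M$ along its diagonals. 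Since $\mathcal{T}^{\ast}$ is by definition the adjoint of the map sending a vector to its associated Hermitian Toeplitz matrix, $\mathcal{T}^{\ast}(M) = e_1$ says precisely that the main diagonal of $M$ sums to $1$ while every other diagonal sums to $0$; by Hermitian symmetry the negative-$d$ conditions are redundant, so $\mathcal{T}^{\ast}(M) = e_1$ is equivalent to the scalar identity $u(t)^{\ast} M u(t) \equiv 1$ on $\mathbb{T}$.

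Given this dictionary, the direction ($\Leftarrow$) is immediate. Applying the Schur complement to the block matrix in~\eqref{eq:sdp-charact} (with the identity block as pivot) yields $\Lambda \succeq C C^{\ast}$, hence for every $t$,
\[
\sum_{k=1}^m |(\mathcal{F}_n^{\ast} C_k)(t)|^2 = u(t)^{\ast} C C^{\ast} u(t) \;\leq\; u(t)^{\ast} \Lambda u(t) \;=\; 1,
\]
where the last equality uses $\mathcal{T}^{\ast}(\Lambda) = e_1$.

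For ($\Rightarrow$), set $R(t) := 1 - \sum_{k=1}^m |(\mathcal{F}_n^{\ast} C_k)(t)|^2$. By hypothesis $R$ is a trigonometric polynomial of degree at most $n-1$ that is nonnegative on $\mathbb{T}$, so the Fej\'er--Riesz theorem produces $g(t) = \sum_{l=0}^{n-1} g_l e^{i 2\pi l t}$ with $R(t) = |g(t)|^2$; collecting the coefficients into $g \in \C^n$ gives $|g(t)|^2 = |u(t)^{\ast} g|^2 = u(t)^{\ast} g g^{\ast} u(t)$. Now take $\Lambda := C C^{\ast} + g g^{\ast}$, which is Hermitian and $n \times n$. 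The block matrix in~\eqref{eq:sdp-charact} is positive semidefinite since its Schur complement $\Lambda - C C^{\ast} = g g^{\ast}$ is; and
\[
u(t)^{\ast} \Lambda u(t) = u(t)^{\ast} C C^{\ast} u(t) + |g(t)|^2 = \big(1 - R(t)\big) + R(t) = 1
\]
for all $t$, which by the dictionary above is exactly the constraint $\mathcal{T}^{\ast}(\Lambda) = e_1$.

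The only substantial input is the Fej\'er--Riesz spectral factorization of nonnegative trigonometric polynomials, which I would invoke as a black box (its matrix-valued generalization is Theorem~4.24 in~\cite{dumitrescu}); the remainder is linear algebra. The places that need care are keeping the degree count tight so that the factor $g$ lands in $\C^n$ and not a larger space, discarding the unimodular factor relating $(\mathcal{F}_n^{\ast} C_k)(t)$ to $u(t)^{\ast} C_k$, and pinning down the identification of $\mathcal{T}^{\ast}$ with the diagonal-sum functional so that the scalar normalization $u(t)^{\ast} \Lambda u(t) \equiv 1$ is genuinely the same as the linear constraint $\mathcal{T}^{\ast}(\Lambda) = e_1$.
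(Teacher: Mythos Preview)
Your proposal is correct and follows essentially the same route as the paper: both directions hinge on the Schur complement $\Lambda \succeq CC^{\ast}$, the identification of $\mathcal{T}^{\ast}(\Lambda)=e_1$ with the scalar identity $u(t)^{\ast}\Lambda u(t)\equiv 1$, and the Fej\'er--Riesz factorization of the nonnegative remainder $1-u(t)^{\ast}CC^{\ast}u(t)$ to produce the rank-one correction $gg^{\ast}$. The paper's proof phrases the ``dictionary'' via the adjoint pairing $\langle \Lambda,\mathcal{T}(z_1(t))\rangle=\langle \mathcal{T}^{\ast}(\Lambda),z_1(t)\rangle$ rather than your explicit diagonal-sum computation, but the content is identical.
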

For any vector $u$ such that $u_1$ is positive and real, $\mathcal{T}\brac{u}$ is a Hermitian Toeplitz matrix whose first row is equal to $u$. The adjoint of $\mathcal{T}$ with respect to the usual matrix inner product $\PROD{M_1}{M_2}=\text{Tr}\brac{M_1^{\ast}M_2}$, extracts the sums of the diagonal and off-diagonal elements of a matrix 
\begin{align*}
\mathcal{T}^{\ast}\brac{M}_j = \sum_{i=1}^{n-j+1}M_{i,i+j-1}.
\end{align*}
Setting $m=1$ in Proposition~\ref{prop:sdp-charact}, Problem~\eqref{eq:TV_normMin_dual} is equivalent to 
\begin{align}
\label{eq:TV_normMin_sdp}
\max_{c \in \C^n, \, \Lambda\in
\C^{n\times n}} \;  \<y, c \> \qquad
\text{subject to} \quad \MAT{\Lambda & c \\ c^{\ast} & 1} \succeq 0, \quad \mathcal{T}^{\ast}\brac{\Lambda}= e_1.
\end{align}
In order to extract an estimate of the support of the primal variable, we construct a \emph{support-locating} polynomial from the solution to~\eqref{eq:TV_normMin_sdp} $\hat{c}$,
\begin{align}
  P_{\hat{c}} \brac{ t } =  (\mathcal{F}_{n}^{\ast} \, \hat{c})(t). \label{polynomial_P}
\end{align}
By strong duality, which holds because the interior of the feasible set of Problem~\eqref{eq:TV_normMin_dual} contains the origin and is
consequently non-empty~\cite{rockafellar1974conjugate}, any solution $\hat{x}$ to \eqref{eq:TVnormMin} obeys
\begin{align*}
 \operatorname{Re} \sqbr{\int_0^1 \overline{ P_{\hat{c}}  \brac{t} }\, \hat{x}(\text{d}t)} & =\<\mathcal{F}_{n}^{\ast} \, \hat{c},x\> =\<\hat{c},\mathcal{F}_{n} \, x\>  =  \<y, \hat{c}\>  = \normTV{\hat{x}}, 
\end{align*}
which implies that $ P_{\hat{c}} $ is equal to the sign of the primal solution $\hat{x}$ at any point where the latter is non-zero. This suggests super-resolving the signal by applying the following scheme:
\begin{enumerate}
\item Solve the finite-dimensional semidefinite program~\eqref{eq:TV_normMin_sdp}.
\item Construct the support-locating polynomial and determine a set of points where its magnitude is equal to one to produce an estimate of the signal support.
\item Estimate the amplitude of the signal by solving the corresponding system of equations.
\end{enumerate}
Figure~\ref{fig:support_locating_pol} shows an example. This approach was proposed in~\cite{superres} and extended to a noisy setting in~\cite{robust_sr}. We refer the reader to Section~4 in~\cite{superres} and Section~3 in~\cite{robust_sr} for numerical simulations and a more detailed discussion (see also~\cite{atomic_norm_denoising} where a related semidefinite program is applied to the denoising of line spectra). Sections~\ref{sec:implementation_sinesspikes}
 and~\ref{sec:implementation_commonsupport} describe similar algorithms for demixing sines and spikes and super-resolving point sources with a common support. Finally, we would like to mention that other strategies to overcome discretization have been proposed~\cite{sparse_inverse_ben,radon_measures}.

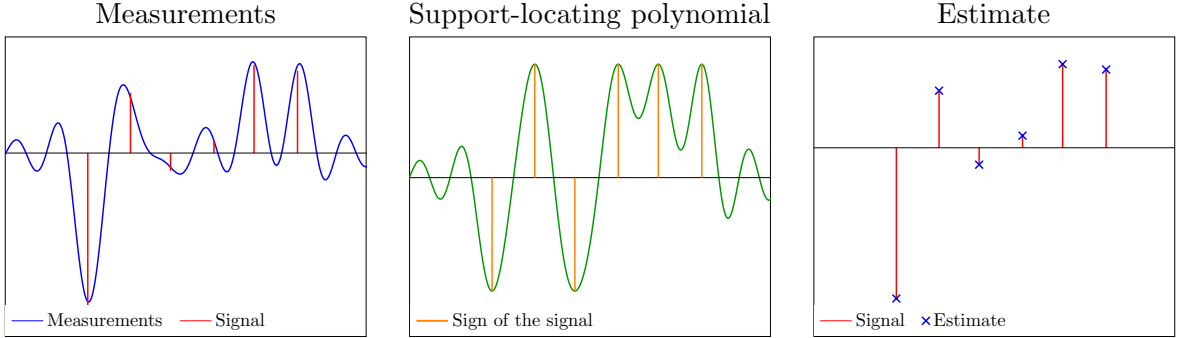
\begin{figure}[t]
\centering
\begin{tabular}{  >{\centering\arraybackslash}m{0.3\linewidth} >{\centering\arraybackslash}m{0.3\linewidth}  >{\centering\arraybackslash}m{0.3\linewidth}  }
Measurements & Support-locating polynomial & Estimate\\
\begin{tikzpicture}[scale=0.7]
\begin{axis}[xmin=0.2,xmax=0.5,ticks=none,legend entries={Measurements,Signal},legend style={font=\small,at={(0,0)},
anchor=south west,legend columns=-1,/tikz/every even column/.append style={column sep=0.25cm},draw=none}]
\addlegendimage{blue};
\addlegendimage{red};
\addplot[blue, thick] file {data_lowpass.dat};
\addplot+[ycomb,red,no marks,thick] file {data_signal.dat};
\addplot[black] coordinates
{(0.2,0) (0.9,0)};
\end{axis}
\end{tikzpicture}  
&
\begin{tikzpicture}[scale=0.7]
\begin{axis}[xmin=0.2,xmax=0.5,ticks=none,legend style={font=\small,at={(0,0)},anchor=south west,legend columns=-1,/tikz/every even column/.append style={column sep=0.25cm},draw=none},ymin=-1.4]
\addplot[black!40!green, thick,forget plot] file {data_dualpoly.dat};
\addplot[black,forget plot] coordinates {(0.2,0) (0.5,0)};
\addplot+[ycomb,orange,no marks,thick,forget plot] file {data_sign.dat};
\addlegendimage{orange,line width=1pt}
\addlegendentry{Sign of the signal}
\end{axis}
\end{tikzpicture}
&
\begin{tikzpicture}[scale=0.7]
\begin{axis}[xmin=0.2,xmax=0.5,ticks=none,
legend entries={Signal,Estimate},legend style={font=\small,at={(0,0)},
anchor=south west,legend columns=-1,/tikz/every even column/.append style={column sep=0.25cm},draw=none},ymin=-38]
\addlegendimage{red}
\addlegendimage{only marks,mark=x,blue,mark size=3pt, thick}
\addplot+[ycomb,red,no marks,thick] file {data_signal.dat};
\addplot[black] coordinates {(0.2,0) (0.5,0)};
\addplot+[ycomb,blue,only marks,mark=x,mark size=3pt, thick] file {data_est.dat};
\end{axis}
\end{tikzpicture}
\end{tabular}
\caption{On the left we have the low-pass measurements (blue) corresponding to a superposition of spikes (red). At the center, we see how the support-locating polynomial obtained from solving Problem~\eqref{eq:TV_normMin_sdp} interpolates the sign of the signal very precisely. This allows to obtain an extremely accurate estimate of the signal, as shown on the right.}
\label{fig:support_locating_pol}
\end{figure}

\subsection{Related work}
\label{sec:related_work}
A popular approach to the estimation of point sources from bandlimited data is to fit an estimate of the point-spread function of the sensing device to each source individually; see~\cite{fpalm} for an example in fluorescence microscopy. This is essentially equivalent to matched-filtering techniques used in communications and radar, as well as linear nonparametric spectral-analysis algorithms such as the periodogram~\cite{stoica_book}. These simple methods perform well as long as the dynamic range of the signal is not very high and the sources are well separated. However, if this is not the case, their performance can be degraded due to aliasing, even in the absence of noise. 

Prony's method is a classical technique that exploits the algebraic structure of the data to tackle the problem of spectral super-resolution~\cite{prony}. The method is able to super-resolve the support of the sparse spectrum of a signal from measurements following~\eqref{eq:fourier} by building a polynomial whose zeros are located exactly on the support. This motivates the use of the term \emph{annihilating filter} in the finite rate of innovation (FRI) framework~\cite{fri,blu_fri,fri_moments}. The FRI framework allows to super-resolve signals with a parametric representation that has a finite number of degrees of freedom. The superpositions of Dirac measures that we consider in spectral super-resolution belong to this class. For this type of signal, FRI reduces exactly to Prony's method (see Section~3 in~\cite{fri} or Section~1 in~\cite{blu_fri}). We would like to mention that algebraic techniques have also been applied to the location of singularities in the reconstruction of piecewise polynomial
functions from a finite number of Fourier coefficients (see
\cite{banerjee_algebraic,batenkov_algebraic,eckhoff_algebraic} and
references therein). The theoretical analysis of these methods proves
their accuracy up to a certain limit related to the number of
measurements. 

In the absence of noise, Prony's method is capable of super-resolving $s$ spikes from $2s$ measurements without any minimum-separation condition. However this is no longer the case when the data are noisy, as small perturbations may produce substantial changes in the position of the zeros of the annihilating polynomial. A popular approach to overcome this issue is to estimate the location of the sources from the eigendecomposition of the empirical covariance matrix of the data. Such techniques, which include MUSIC~\cite{music1,music2} and ESPRIT~\cite{esprit}, have widespread popularity in the signal-processing community. We refer the reader to~\cite{stoica_book} for a detailed description. A variant that is often used by FRI methods fits a low-rank Toeplitz model to a matrix constructed from the data, see Section 2 of~\cite{blu_fri}. This technique is due to Tufts and Kumaresan~\cite{tufts_kumaresan} and is also known as Cadzow denoising~\cite{cadzow}.

Most theoretical work analyzing the robustness of covariance-based methods for spectral super-resolution is based on the asymptotic characterization of the sample covariance matrix under Gaussian noise~\cite{stoica_statistical,clergeot_music}. However, some recent stability analysis has shown that these methods are robust for signals with a minimum separation above $\lambda_c$~\cite{moitra_superres}, see also~\cite{demanet_superres, music_liao}. In addition, \cite{music_liao} shows that numerically the noise level tolerated by MUSIC for small signal separations obeys a power law. In practice, the performance of classical spectral methods such as MUSIC, Cadzow's method or ESPRIT seems to be comparable to the optimization-based approach which is the focus of the present paper, see~\cite{atomic_norm_denoising} for some numerical comparisons.

The idea of applying $\ell_1$-norm minimization and related optimization programs to the super-resolution problem dates back to Beurling~\cite{beurling_1, beurling_2} and researchers in geophysics from the 1970s~\cite{claerbout,levy,santosa}. Theoretical guarantees for positive signals in the absence of noise were obtained in~\cite{fuchs_positive,donoho_positive,supportPursuit} (see also~\cite{pos_gaussian_sr} for an interesting recent result). As explained previously in this section, \cite{superres} proved that exact recovery via convex optimization for measures with arbitrary complex amplitudes satisfying a minimum separation of $2 \lambda_c$ and provided stability guarantees for a discretized version of the problem. Subsequent works then established stability guarantees in a continuous setting~\cite{robust_sr, support_detection, azais2015spike, tang_minimax,peyreduval} and analyzed related problems using similar techniques~\cite{sr_radar, spherical_harmonics_sr,stft_superres}. Finally, we would like to mention an application of super-resolution via convex programming to fluorescence microscopy~~\cite{storm_superres}, another optimization-based approach to super-resolve point sources on a continuous domain~\cite{cbp} and an alternative proof of exact recovery via TV-norm minimization~\cite{thesis_mahdi} that allows to sharpen the result in~\cite{superres} to $1.56 \lambda_c$.
\section{A general framework for super-resolution}
\label{sec:convex_framework}
In practical applications it is often desirable to adapt the signal, noise or measurement model to incorporate prior information. Optimization-based methods are well suited for this purpose: tailoring the cost function and the constraints of the optimization program allows to account for application-specific structure. We illustrate this by deriving optimization-based algorithms to demix sines and spikes and to super-resolve multiple signals with a common support. In both cases we provide an implementation based on semidefinite programming, derive the corresponding dual certificate and report some experimental results.

\subsection{Super-resolution from corrupted data: Demixing of sines and spikes}
\label{sec:sines_spikes}

\begin{figure}[t]
\centering
\begin{tabular}{  >{\centering\arraybackslash}m{0.08\linewidth} >{\centering\arraybackslash}m{0.35\linewidth} >{\centering\arraybackslash}m{0.35\linewidth}  }
 \vspace{0.1cm}& \vspace{0.1cm}& Spectrum\vspace{0.1cm} \\
Sines &
\includegraphics{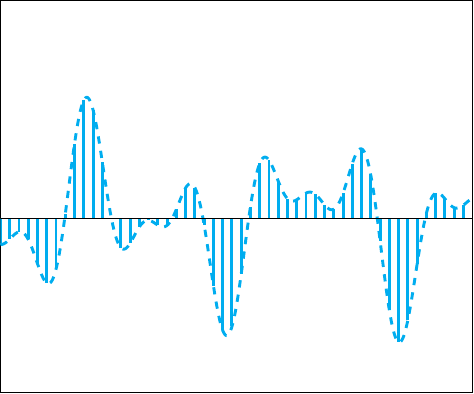}
& 
\includegraphics{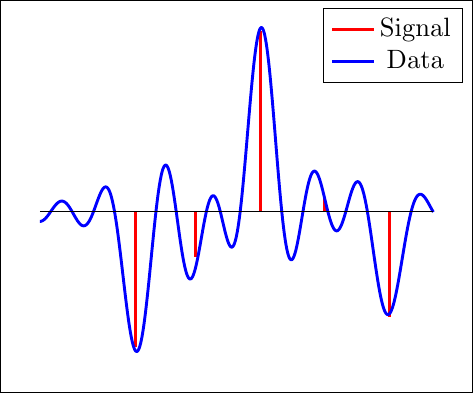}
\\
Spikes &
\includegraphics{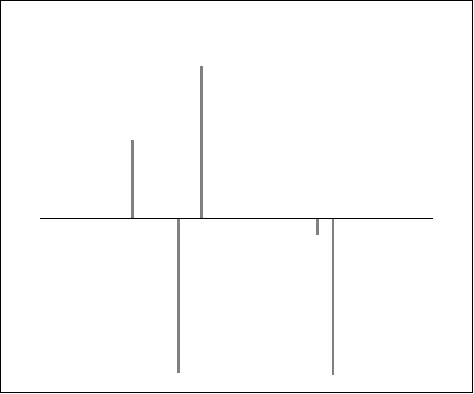}
&
\includegraphics{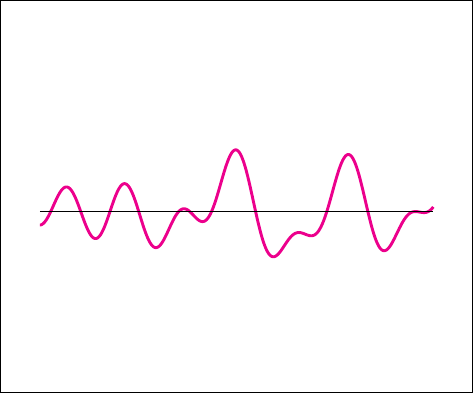}
\\
Sines \hspace{2cm} 
+ 
Spikes 
& 
\includegraphics{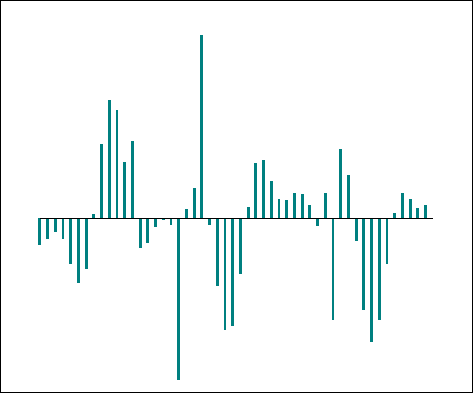}
&
\includegraphics{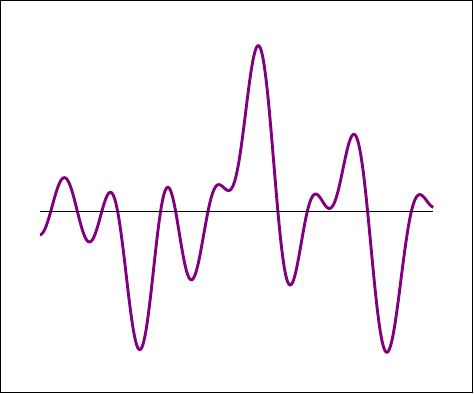}
\end{tabular}
\caption{Mixture of sines and spikes: samples from a multi-sinusoidal signal (top) are combined with impulsive events (middle) to yield the data (bottom). }
\label{fig:sines_spikes}
\end{figure}

As mentioned previously, the super-resolution problem discussed in Section~\ref{sec:noiseless} can be used to model line-spectra estimation. In that case, the measurements correspond to equispaced samples from a multi-sinusoidal signal measured at the Nyquist rate over a limited period of time. We now consider line-spectra estimation when some of the data may be completely corrupted. This could be due to the presence of sparse noise, to the intermittent failure of a sensor or to some other impulsive phenomenon. We model the corruptions as a vector $s \in \C^n$ with support $S$. In the resulting measurement model, the data $y$ are of the form
\begin{align}
 y = \mathcal{F}_{n} \, x + s. \label{eq:sinesspikes_data}
\end{align}
As shown in Figure~\ref{fig:sines_spikes}, impulsive corruptions produce additional aliasing that further masks the line-spectra locations when we visualize the data in the frequency domain. We aim to \emph{demix} the two components: the \emph{sines}, whose spectrum is denoted by $x$, and the \emph{spikes} $s$.

\subsubsection{Estimation via convex programming}
Convex programming has proven very effective in separating incoherent low-dimensional components ~\cite{chandrasekaran2011rank,candes2011robust,li2013compressed,demixing_tropp}. Inspired by these works, we design a simple convex program that allows to achieve effective demixing \emph{without any previous knowledge of the number of spikes or sines}. To separate the two components we penalize sparsity-inducing norms in both domains: the $\ell_1$ norm of the spikes and the TV norm the spectrum of the sines, which is modeled as an atomic measure. These two norms are combined additively. More explicitly, the estimate is computed by solving 
\begin{align}
\label{eq:primal_sinesspikes}
\min_{\tilde{x},\, \tilde{s}\in \C^n} \;  \normTV{\tilde{x}} + \eta \normOne{\tilde{s}} \quad \text{subject to} \quad
\mathcal{F}_{n} \, \tilde{x} + \tilde{s} = y, 
\end{align}
where $\eta > 0 $ is a real-valued parameter that determines the tradeoff between the two terms in the cost function. This formulation was previously proposed in~\cite{tang2014robust}.

\subsubsection{Implementation}
\label{sec:implementation_sinesspikes}
As in the case of our basic model (see Section~\ref{sec:implementation}), we have two options to solve Problem~\eqref{eq:primal_sinesspikes}. The first is to discretize the support of the primal variable that represents the line-spectra estimate and then solve an $\ell_1$-norm minimization problem. The second is to consider the dual of Problem~\eqref{eq:primal_sinesspikes}, provided by the following lemma (see Section~\ref{proof:dual_sinesspikes} of the appendix for the proof).
\begin{lemma}
\label{lemma:dual_sinesspikes}
The dual of Problem~\eqref{eq:primal_sinesspikes} is
\begin{align}
\label{eq:dual_sinesspikes}
\max_{c\in \C^{n}} \;   \<y, c\>  \quad \text{subject to}
\quad \normInf{\mathcal{F}_{n}^{\ast} \, c}  \leq 1 \quad \text{and} \quad \normInf{c}  \leq \eta.
\end{align}
\end{lemma}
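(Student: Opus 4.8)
The plan is to write down the Lagrangian dual of Problem~\eqref{eq:primal_sinesspikes} and simplify it. First I would introduce a single dual variable $c\in\C^n$ for the affine constraint $\mathcal{F}_{n}\tilde{x}+\tilde{s}=y$ and, using the real inner product $\<u,c\>:=\operatorname{Re}(u^{\ast}c)$ fixed in Section~\ref{sec:implementation}, form
\begin{align*}
L(\tilde{x},\tilde{s},c)
&=\normTV{\tilde{x}}+\eta\normOne{\tilde{s}}+\<c,\,y-\mathcal{F}_{n}\tilde{x}-\tilde{s}\>\\
&=\Big(\normTV{\tilde{x}}-\<\mathcal{F}_{n}^{\ast}c,\tilde{x}\>\Big)+\Big(\eta\normOne{\tilde{s}}-\<c,\tilde{s}\>\Big)+\<c,y\>,
\end{align*}
where I use $\<c,\mathcal{F}_{n}\tilde{x}\>=\<\mathcal{F}_{n}^{\ast}c,\tilde{x}\>$, which is valid for the real inner product since $\operatorname{Re}(c^{\ast}\mathcal{F}_{n}\tilde{x})=\operatorname{Re}((\mathcal{F}_{n}^{\ast}c)^{\ast}\tilde{x})$. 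The dual function is obtained by minimizing $L$ over $\tilde{x}$ (a finite complex measure on $[0,1]$) and over $\tilde{s}\in\C^n$, and these two minimizations decouple.

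For the $\tilde{s}$-block, which is finite dimensional, the standard duality between $\normOne{\cdot}$ and $\normInf{\cdot}$ on $\C^n$ under the real inner product (align the phase of each $\tilde{s}_j$ with that of $c_j$ and scale) gives $\inf_{\tilde{s}}\big(\eta\normOne{\tilde{s}}-\<c,\tilde{s}\>\big)=0$ if $\normInf{c}\le\eta$ and $-\infty$ otherwise. For the $\tilde{x}$-block I would use that $\mathcal{F}_{n}^{\ast}c$ is the continuous low-pass trigonometric polynomial $t\mapsto\sum_{|k|\le\fc}c_k e^{i2\pi kt}$, so that the pairing $\<\mathcal{F}_{n}^{\ast}c,\tilde{x}\>=\operatorname{Re}\sqbr{\int_{\mathbb{T}}\overline{(\mathcal{F}_{n}^{\ast}c)(t)}\,\tilde{x}(\mathrm{d}t)}$ is well defined for every finite measure $\tilde{x}$. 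By the duality between $\normTV{\cdot}$ and the sup norm on $C(\mathbb{T})$ recalled in Section~\ref{sec:optimization_based}, if $\normInf{\mathcal{F}_{n}^{\ast}c}\le1$ then $\<\mathcal{F}_{n}^{\ast}c,\tilde{x}\>\le\normTV{\tilde{x}}$ for all $\tilde{x}$, so the block is nonnegative and vanishes at $\tilde{x}=0$; conversely, if $|(\mathcal{F}_{n}^{\ast}c)(t_0)|>1$ for some $t_0$, then taking $\tilde{x}=R\,e^{i\arg((\mathcal{F}_{n}^{\ast}c)(t_0))}\delta_{t_0}$ and letting $R\to\infty$ drives the block to $-\infty$. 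Hence $\inf_{\tilde{x}}\big(\normTV{\tilde{x}}-\<\mathcal{F}_{n}^{\ast}c,\tilde{x}\>\big)=0$ if $\normInf{\mathcal{F}_{n}^{\ast}c}\le1$ and $-\infty$ otherwise.

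Combining the two blocks, the dual function equals $\<c,y\>=\<y,c\>$ on the set $\{c:\normInf{\mathcal{F}_{n}^{\ast}c}\le1,\ \normInf{c}\le\eta\}$ and $-\infty$ elsewhere, which is exactly Problem~\eqref{eq:dual_sinesspikes}. It is also worth remarking that strong duality holds, since the primal is feasible ($\tilde{x}=0$, $\tilde{s}=y$) and the origin lies in the interior of the dual feasible set, so that Slater's condition applies as in the discussion following~\eqref{eq:TV_normMin_sdp} (cf.~\cite{rockafellar1974conjugate}). I expect the only genuine subtlety to be the $\tilde{x}$-block: because $\tilde{x}$ ranges over an infinite-dimensional space of measures, the value of that inner infimum must be pinned down through the $\normTV{\cdot}$–$C(\mathbb{T})$ duality (with $\mathcal{F}_{n}^{\ast}c$ playing the role of the test function in the definition of the TV norm) rather than by a finite-dimensional conjugacy argument; everything else is routine.
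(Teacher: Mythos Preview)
Your proposal is correct and follows essentially the same approach as the paper: introduce a single multiplier $c\in\C^n$ for the affine constraint, split the Lagrangian into the $\tilde{x}$-block and the $\tilde{s}$-block, and use the $\normTV{\cdot}$--$C(\mathbb{T})$ and $\ell_1$--$\ell_\infty$ dualities to conclude that the infimum is $\<y,c\>$ on the stated feasible set and $-\infty$ off it. You supply more detail than the paper (explicit witnesses for the $-\infty$ cases and the strong-duality remark), but the argument is the same.
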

Problem~\ref{eq:dual_sinesspikes} can be recast as the semidefinite program 
\begin{align}
\label{eq:TVnormMin_sdp_sinesspikes}
\max_{c \in \C^n, \, \Lambda\in
\C^{n\times n}} \;   \<y, c \> \quad
\text{subject to} \quad  \MAT{\Lambda & c \\ c^{\ast} & 1} \succeq 0, \quad \mathcal{T}^{\ast}\brac{\Lambda}= e_1 \quad \text{and} \quad \normInf{c}  \leq \eta,
\end{align}
by invoking Proposition~\ref{prop:sdp-charact}. After solving~\eqref{eq:TVnormMin_sdp_sinesspikes} we still need to decode the support of the primal solution from the dual solution. This can be done by leveraging the following lemma, proved in Section~\ref{proof:primaldual_sinesspikes} of the appendix.
\begin{lemma}
\label{lemma:primaldual_sinesspikes}
Let 
\begin{align*}
\hat{x}=\sum_{t_j \in \widehat{T}} \hat{a}_{j} \delta_{t_j}=\sum_{t_j \in \widehat{T}} \abs{\hat{a}_{j}}{e^{i\phi_j}} \delta_{t_j} \qquad \text{and} \qquad \hat{s}_l = \abs{\hat{s}_l}e^{i \psi_l}, \;  l \in \widehat{\Set} \subseteq \keys{1,\dots,n},
\end{align*}
be a solution to \eqref{eq:primal_sinesspikes}, such that $\widehat{T}$ and $\widehat{\Set}$ are the nonzero supports of the line spectra $\hat{x}$ and the spikes $\hat{s}$ respectively. If $\hat{c} \in \C^n$ is a corresponding dual solution, then for any $t_j$ in $\widehat{T}$ 
\begin{align*}
 \brac{\mathcal{F}_{n}^{\ast} \, \hat{c}} \brac{t_j} =e^{i\phi_j}
\end{align*}
and for any $l$, $1\leq l \leq n$ in $\widehat{\Set}$
\begin{align*}
\hat{c}_l = \eta \, e^{i \psi_l}.
\end{align*}
\end{lemma}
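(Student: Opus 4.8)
The strategy is to combine strong duality with a complementary-slackness argument, exactly as in the derivation of the analogous identity for Problem~\eqref{eq:TVnormMin} in Section~\ref{sec:implementation}. First I would record that strong duality holds between Problem~\eqref{eq:primal_sinesspikes} and its dual~\eqref{eq:dual_sinesspikes}: the origin satisfies both constraints $\normInf{\mathcal{F}_{n}^{\ast} c}\le 1$ and $\normInf{c}\le\eta$ strictly, so it lies in the interior of the dual feasible set, and hence there is no duality gap (and the dual optimum is attained) by the same reasoning invoked in Section~\ref{sec:implementation}. Consequently, for any primal solution $(\hat{x},\hat{s})$ and any corresponding dual solution $\hat{c}$,
\begin{align*}
\normTV{\hat{x}} + \eta\,\normOne{\hat{s}} = \<y,\hat{c}\>.
\end{align*}

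Next I would expand the right-hand side using the measurement model $y = \mathcal{F}_{n}\hat{x} + \hat{s}$ and the definition of the adjoint,
\begin{align*}
\<y,\hat{c}\> = \<\mathcal{F}_{n}\hat{x},\hat{c}\> + \<\hat{s},\hat{c}\> = \operatorname{Re}\sqbr{\int_{0}^{1}\overline{(\mathcal{F}_{n}^{\ast}\hat{c})(t)}\,\hat{x}(\mathrm{d}t)} + \operatorname{Re}\sqbr{\hat{s}^{\ast}\hat{c}}.
\end{align*}
Substituting $\hat{x} = \sum_{t_j\in\widehat{T}}\abs{\hat{a}_j}e^{i\phi_j}\delta_{t_j}$ and $\hat{s}_l = \abs{\hat{s}_l}e^{i\psi_l}$ (which vanishes off $\widehat{S}$) turns the two terms into $\sum_{t_j\in\widehat{T}}\abs{\hat{a}_j}\operatorname{Re}\sqbr{\overline{(\mathcal{F}_{n}^{\ast}\hat{c})(t_j)}\,e^{i\phi_j}}$ and $\sum_{l\in\widehat{S}}\abs{\hat{s}_l}\operatorname{Re}\sqbr{e^{-i\psi_l}\,\hat{c}_l}$. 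The dual constraint $\normInf{\mathcal{F}_{n}^{\ast}\hat{c}}\le 1$ bounds the $j$th summand of the first sum by $\abs{\hat{a}_j}$, and $\normInf{\hat{c}}\le\eta$ bounds the $l$th summand of the second sum by $\eta\abs{\hat{s}_l}$; adding these inequalities recovers $\<y,\hat{c}\>\le\normTV{\hat{x}}+\eta\normOne{\hat{s}}$.

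Comparing with the strong-duality identity forces every one of these term-by-term inequalities to be an equality. For $t_j\in\widehat{T}$ we have $\abs{\hat{a}_j}\neq 0$, so $\operatorname{Re}\sqbr{\overline{(\mathcal{F}_{n}^{\ast}\hat{c})(t_j)}\,e^{i\phi_j}}=1$; together with $\abs{(\mathcal{F}_{n}^{\ast}\hat{c})(t_j)}\le 1$ this yields $(\mathcal{F}_{n}^{\ast}\hat{c})(t_j)=e^{i\phi_j}$, via the elementary observation that a complex number $w$ with $\abs{w}\le 1$ and $\operatorname{Re}\brac{\bar{w}\,e^{i\phi}}=1$ must equal $e^{i\phi}$. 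Applying the same fact to the second sum, with $\abs{\hat{s}_l}\neq 0$ for $l\in\widehat{S}$ and $\abs{\hat{c}_l}\le\eta$, gives $\hat{c}_l=\eta\,e^{i\psi_l}$, which is the claim.

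The only delicate point is the justification of strong duality together with attainment of the dual optimum in this infinite-dimensional primal setting; once that is granted, the rest is routine complementary-slackness bookkeeping as sketched above. This obstacle is, however, already dealt with for the closely related pair~\eqref{eq:TVnormMin}--\eqref{eq:TV_normMin_dual} in Section~\ref{sec:implementation} (via~\cite{rockafellar1974conjugate}), and the same argument transfers here, with the additional constraint $\normInf{c}\le\eta$ causing no extra difficulty.
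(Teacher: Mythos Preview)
Your proof is correct and follows essentially the same approach as the paper's: invoke strong duality via the Slater-type condition (origin in the interior of the dual feasible set), expand $\<y,\hat{c}\>$ using $y=\mathcal{F}_n\hat{x}+\hat{s}$, and then observe that the dual constraints together with the equality force each term to achieve its bound. The paper phrases the last step as ``by H\"older's inequality and the constraints on $\hat{c}$ \ldots\ otherwise equality would not be possible,'' whereas you spell out the elementary fact about complex numbers with modulus at most one; these are the same argument at different levels of detail.
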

\begin{figure}[t]
\centering
\begin{tabular}{
>{\centering\arraybackslash}m{0.12\linewidth}>{\centering\arraybackslash}m{0.35\linewidth} >{\centering\arraybackslash}m{0.35\linewidth} }
 \vspace{0.1cm}&  $\mathcal{F}_c^{\ast} \, \hat{c}$  \vspace{0.1cm} & $\hat{c}$ \vspace{0.1cm}\\
Dual solution &
\includegraphics{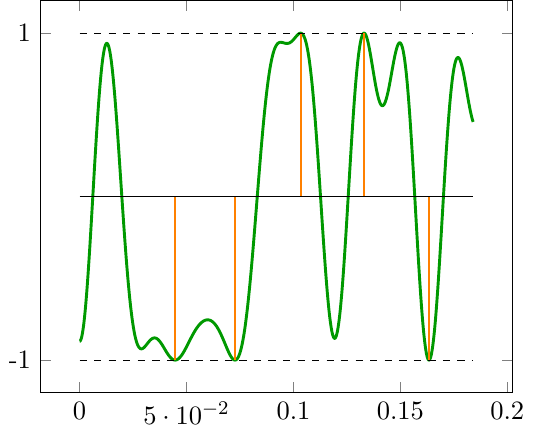}
& 
\includegraphics{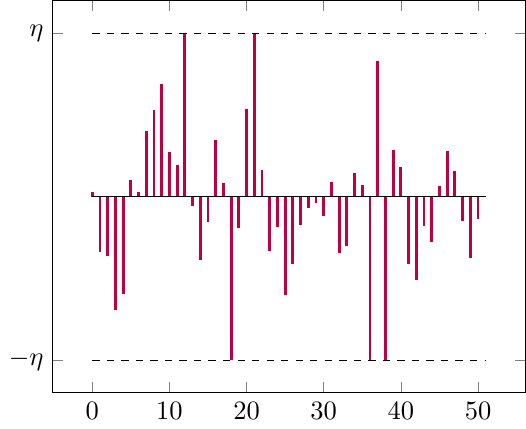}
\\
Estimate
&
\includegraphics{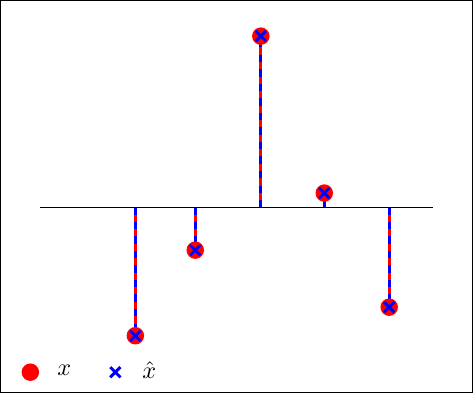}
&
\includegraphics{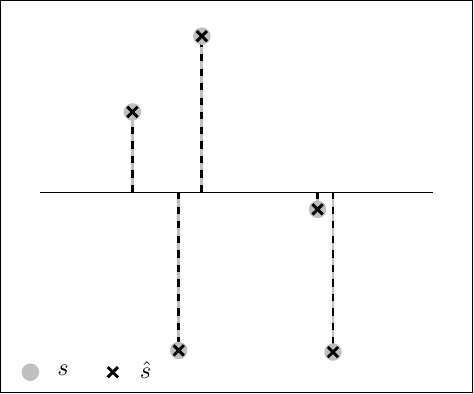}
\vspace{0.01cm}
\end{tabular}
\caption{Demixing of the signal in Figure~\ref{fig:sines_spikes} by semidefinite programming. Top left: the polynomial $\mathcal{F}_c^{\ast} \, \hat{c}$ (green) corresponding to the solution of~\eqref{eq:TVnormMin_sdp_sinesspikes} $\hat{c}$ interpolates the sign of the line spectra of the sines (orange) on their support. Top right: $\eta^{-1} \hat{c}$ interpolates the sign pattern of the spikes on their support. Bottom: locating the support of $x$ and $s$ allows to demix to very high precision.}
\label{fig:sines_spikes_sdp}
\end{figure}

In words, the weighted dual solution $\eta^{-1} \hat{c}$ and the corresponding polynomial $\mathcal{F}_{n}^{\ast} \, \hat{c}$ interpolate the sign patterns of the primal-solution components $\hat{x}$ and $\hat{s}$ on their respective supports. This suggests the following demixing scheme:
\begin{enumerate}
\item Solve the finite-dimensional semidefinite program~\eqref{eq:TVnormMin_sdp_sinesspikes} to obtain a dual solution $\hat{c}$.
\item The support of $\hat{s}$ is given by the points at which $\hat{c}$ equals $\eta$.
\item Construct the support-locating polynomial $\mathcal{F}_c^{\ast} \, \hat{c}$. The support of $\hat{x}$ is given by the points at which its magnitude equals one.
\item Estimate the amplitudes of $\hat{x}$ and $\hat{s}$ by solving the corresponding system of equations.
\end{enumerate}
The resulting $\hat{x}$ and  $\hat{s}$ are our estimates of the line spectra and of the spikes respectively. Figure~\ref{fig:sines_spikes_sdp} shows the results obtained by this method on the data described in Figure~\ref{fig:sines_spikes}: both components are recovered very accurately.~\footnote{A Matlab script reproducing this example is available at \url{http://www.cims.nyu.edu/~cfgranda/scripts/sines_and_spikes.m}.}

\subsubsection{Theoretical analysis: Dual certificate}
\label{sec:dualcert_sinesspikes}
The success or failure of the demixing procedure is determined by the structure of the signal that we aim to recover. As in the case of our basic model, this is reflected by the dual certificate corresponding to Problem~\eqref{eq:primal_sinesspikes}.
\begin{lemma}
\label{lemma:cert_sinesspikes}
Let $y = \mathcal{F}_n x + s$, where 
\begin{align*}
x=\sum_{t_j \in T} a_{j} \delta_{t_j}=\sum_{t_j \in T} \abs{a_{j}}{e^{i\phi_j}} \delta_{t_j}, \quad \text{and} \quad s_l = \abs{w_l}e^{i \psi_l}
\end{align*}
for $s \in \C^n$ with support $\Set \subseteq \keys{1,\dots,n}$. If $\abs{T} + \abs{\Set} \leq n $ and there exists a low-pass polynomial $q: \mathbb{T} \rightarrow \C$
\begin{align}
q (t) =\mathcal{F}_n^{\ast}c = \sum_{l = -\fc}^{\fc} c_{l} e^{i2\pi lt}, \label{eq:opt_lowpass}
\end{align}
where $c \in \C^n$, such that 
\begin{alignat}{2}
  & q(t_j)  = e^{i \phi_j} \quad  t_j \in T, 
 \qquad && \abs{ q(t) }   < 1\quad  t  \notin T,  \label{eq:opt1} \\
 & c_l  =  \regpar \; e^{i \psi_l}\quad  l \in \Set,  
\quad &&  \abs{c_j }   < \regpar \quad  j \notin \Set,  \label{eq:opt2}
\end{alignat}
then $x$ and $s$ are the unique solutions to Problem~\eqref{eq:primal_sinesspikes}.
\end{lemma}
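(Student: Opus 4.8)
The plan is to run the standard dual-certificate argument, of which the hypotheses are a tailored instance: $q$ is designed to be a subgradient of $\normTV{\cdot}$ at $x$, the coefficient vector $c$ with $q=\mathcal{F}_n^{\ast}c$ is a subgradient of $\eta\normOne{\cdot}$ at $s$, and because $q$ is the image of $c$ under $\mathcal{F}_n^{\ast}$ this pair is ``aligned'' with the affine constraint $\mathcal{F}_n\tilde x+\tilde s=y$. First I would fix an arbitrary feasible competitor and write it as $(x+h,\,s+g)$ with $h$ a finite complex measure on $\mathbb{T}$ and $g\in\C^n$; feasibility against $y=\mathcal{F}_n x+s$ forces $\mathcal{F}_n h+g=0$.

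Next I would invoke the two subgradient inequalities. Since $\normInf{q}\le 1$ and $q(t_j)=e^{i\phi_j}=\sign{a_j}$ on $T$, we get $\normTV{x+h}\ge\operatorname{Re}\int\overline{q}\,\text{d}(x+h)=\normTV{x}+\operatorname{Re}\PROD{q}{h}$; since $\normInf{c}\le\eta$ and $c_l=\eta e^{i\psi_l}=\eta\,\sign{s_l}$ on $\Set$, we get $\eta\normOne{s+g}\ge\eta\normOne{s}+\operatorname{Re}\PROD{c}{g}$. Adding these and using $q=\mathcal{F}_n^{\ast}c$,
\[
\operatorname{Re}\PROD{q}{h}+\operatorname{Re}\PROD{c}{g}=\operatorname{Re}\PROD{\mathcal{F}_n^{\ast}c}{h}+\operatorname{Re}\PROD{c}{g}=\operatorname{Re}\PROD{c}{\mathcal{F}_n h+g}=0,
\]
so $\normTV{x+h}+\eta\normOne{s+g}\ge\normTV{x}+\eta\normOne{s}$, which already shows $(x,s)$ is a minimizer.

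For uniqueness I would take a second minimizer $(\hat x,\hat s)$, set $h=\hat x-x$ and $g=\hat s-s$, and push the equality case. Equality in the TV inequality together with the \emph{strict} bound $\abs{q(t)}<1$ for $t\notin T$ forces $\hat x$ (hence $h$) to be supported on $T$ with $\sign{\hat a_j}=q(t_j)=e^{i\phi_j}$; equality in the $\ell_1$ inequality together with $\abs{c_l}<\eta$ for $l\notin\Set$ forces $\hat s$ to be supported on $\Set$ with matching phases. Then $h=\sum_{t_j\in T}b_j\delta_{t_j}$, $g$ lives on $\Set$, and they are tied by $\mathcal{F}_n h=-g$; equivalently, the columns of the $n\times(\abs{T}+\abs{\Set})$ matrix obtained by placing the Vandermonde vectors $(e^{-i2\pi kt_j})_{|k|\le\fc}$, $t_j\in T$, next to the canonical vectors $\{e_l\}_{l\in\Set}$ annihilate the coefficient vector $(b,g|_{\Set})$. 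Under $\abs{T}+\abs{\Set}\le n$ these columns are linearly independent, so $h=0$, $g=0$, and $(\hat x,\hat s)=(x,s)$.

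The optimality half is routine; the step that needs care is the linear-independence/injectivity claim in the uniqueness argument. In the pure super-resolution setting the analogous statement reduces to a clean Vandermonde determinant built from the $n$ consecutive modes $k=-\fc,\dots,\fc$, but here one must also rule out degenerate alignments between the exponentials pinned to $T$ and the spike set $\Set$ (for instance $T$ lying on a rational sublattice with $\Set$ matching its aliases); the existence of the certificate $(q,c)$ is precisely what excludes these, so in a careful write-up this independence should be deduced from the certificate together with the counting bound $\abs{T}+\abs{\Set}\le n$ rather than from the bound alone. Finally, I should stress that producing an actual $(q,c)$ obeying~\eqref{eq:opt1}--\eqref{eq:opt2} is not part of this lemma: it is done by adapting the interpolation-kernel construction of Section~\ref{proof:noiseless} so as to incorporate the extra spike constraint $c_l=\eta e^{i\psi_l}$ on $\Set$, and that construction is the genuinely delicate ingredient of the overall theory.
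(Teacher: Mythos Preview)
Your argument tracks the paper's closely: establish optimality via the certificate, then obtain uniqueness by reducing to competitors supported on $(T,\Set)$ and invoking injectivity of the matrix $\MAT{F_T & I_{\Set}}$. You rightly flag this injectivity step as the delicate one---the paper's parenthetical justification (``follows from the fact that $F_T$ is a submatrix of a Vandermonde matrix'') does not actually work, since restricting a Vandermonde matrix to an arbitrary subset of rows need not preserve full column rank. However, your proposed remedy, that the existence of the certificate is ``precisely what excludes'' these degenerate alignments, is also incorrect. In fact the lemma as stated is false, and neither your argument nor the paper's can be completed.

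Here is a counterexample. Take $\fc=2$ (so $n=5$), $\eta=2/5$, $T=\{0,1/2\}$ with $a_1=a_2=1$, and $\Set=\{-1,1\}$ with $s_{-1}=-1$, $s_1=1$, so $\abs{T}+\abs{\Set}=4\le 5$. Set $c=(1/3,\,-2/5,\,1/3,\,2/5,\,1/3)$; then
\[
q(t)=\tfrac{1}{3}\bigl(1+2\cos 4\pi t\bigr)+\tfrac{4i}{5}\sin 2\pi t
\]
satisfies $q(0)=q(1/2)=1$, and with $v=\sin^2 2\pi t$ one has $\abs{q(t)}^2=1-\tfrac{152}{75}v+\tfrac{16}{9}v^2<1$ for all $v\in(0,1]$, i.e.\ for $t\notin T$. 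Moreover $c_{\pm 1}=\eta\,e^{i\psi_{\pm 1}}$ and $\abs{c_l}=1/3<\eta$ for $l\notin\Set$. Every hypothesis of the lemma holds. Yet for each $\lambda\in[-1/2,1/2]$ the pair
\[
\hat x_\lambda=(1+\lambda)\delta_0+(1-\lambda)\delta_{1/2},\qquad \hat s_\lambda=(0,\,-1-2\lambda,\,0,\,1-2\lambda,\,0)
\]
is feasible for~\eqref{eq:primal_sinesspikes} with cost $\normTV{\hat x_\lambda}+\eta\normOne{\hat s_\lambda}=2+2\eta$, so the minimizer is not unique. The mechanism is exactly the ``rational sublattice'' phenomenon you mention: $\mathcal{F}_n(\delta_0-\delta_{1/2})$ is supported precisely on $\Set$, producing a null vector of $\MAT{F_T & I_{\Set}}$ that is aligned with the sign pattern, and the certificate does nothing to prevent this. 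A correct version of the lemma should replace the counting hypothesis $\abs{T}+\abs{\Set}\le n$ by the assumption that $\MAT{F_T & I_{\Set}}$ has full column rank; with that hypothesis both your proof and the paper's go through verbatim.
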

The lemma, proved in Section~\ref{proof:cert_sinesspikes} of the appendix, shows that the dual certificate is a low-pass polynomial which interpolates the sign pattern of the line spectra, just like the dual certificate corresponding to Problem~\eqref{eq:TVnormMin}. As a result, the approach might fail for line spectra with very clustered supports, which makes sense as such problems are challenging even in the absence of corruptions. The dual certificate satisfies two additional conditions that depend on the spikes/corruptions: its coefficients interpolate the sign pattern of the spikes on their support and their magnitude is bounded by $\eta$. If there are too many spikes it may become impossible to build a bounded low-pass polynomial under these constraints that also interpolates the sign of the line spectra. There is consequently a tradeoff between the number of sines and spikes that may be recovered simultaneously. In the following section we provide some experimental results that illustrate this tradeoff. Characterizing it theoretically is an interesting research direction. 

\begin{figure}[t]
\centering
\begin{tabular}{  >{\centering\arraybackslash}m{0.3\linewidth} >{\centering\arraybackslash}m{0.3\linewidth}  >{\centering\arraybackslash}m{0.3\linewidth}  }
 $\Deltamin=1$ &  10 spikes & $\abs{T} = 15$\\
\includegraphics{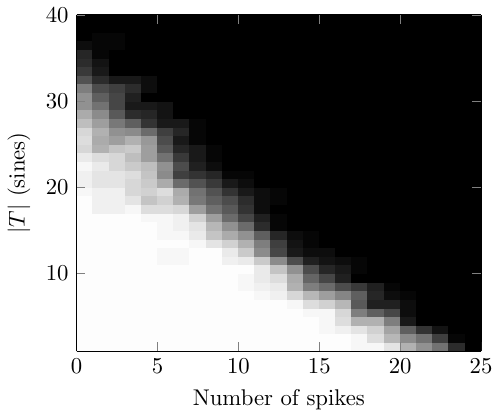}
 &  
\includegraphics{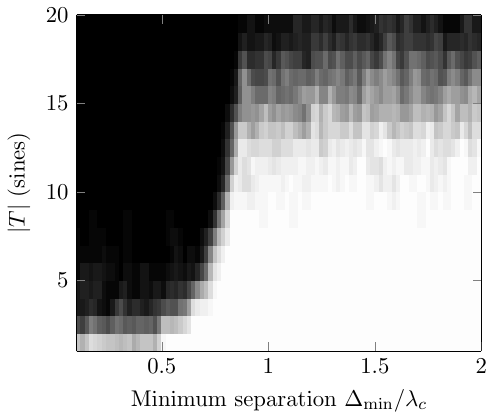}
&
\includegraphics{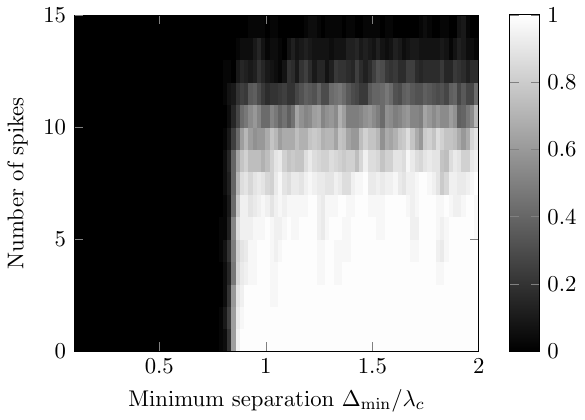}
\end{tabular}
\caption{Graphs showing the fraction of times Problem~\eqref{eq:primal_sinesspikes} achieves exact recovery over 10 trials with random signs and supports for different values of the minimum separation of the line spectra $x$ (left), as well as different cardinalities of the support of the line spectra $x$ denoted by $\abs{T}$ (center) and of the spikes $s$ (right). The simulations are carried out using CVX~\cite{cvx}.}
\label{fig:experiments_sines_spikes}
\end{figure}

\subsubsection{Numerical results}
Figure~\ref{fig:experiments_sines_spikes} shows the result of applying our demixing procedure to signals with different minimum separations of the line spectra $x$, as well as different cardinalities of the supports of $x$ and $s$. In each case we observe a rapid transition between a regime in which exact recovery almost always occurs and a regime in which the method fails. For a fixed minimum separation, the procedure succeeds for a number of spikes which decreases as we increase the number of sine components. In contrast, if we fix the cardinality of $x$, the phase transition occurs at a value of the minimum separation that does not seem to depend on the number of spikes, as long as there are not too many. Similarly, for a fixed number of spikes the minimum separation necessary to achieve exact recovery seems to be independent of the cardinality of $x$ below a certain limit.

\subsection{Point sources with a common support}

In this section we consider the problem of super-resolving $m$ signals consisting of superpositions of point sources, which share a common support. This is a model of interest in fluorescence microscopy, where the data are often well modeled as low-resolution frames where fluorescence probes flicker stochastically~\cite{palm,fpalm, storm}, and in sparse-channel estimation for wireless communications~\cite{barbotin_commonsupport}. 

The signals are modeled as superpositions of Dirac measures with support $T$
\begin{equation}
  \label{eq:model_common_support}
  X_k(t) = \sum_{t_j \in T} A_{jk} \delta_{t_j}, \quad  1\leq k \leq m,
\end{equation}
where $A \in \C^{ \abs{T} \times m}$. The low-resolution measurements consist of the $n = 2\fc + 1$ first Fourier coefficients of each signal. We represent these low-pass data with cut-off frequency $\fc$ as a matrix $Y\in \C^{n\times m}$,
\begin{align}
  Y_{lk} = \int_0^1 e^{-i2\pi lt} X_k(\text{d}t) = \sum_{j=1}^{s} A_{jk} e^{-i2\pi l  t_j} = \brac{ \mathcal{F}_{n} \, X_k }_l , \quad  \abs{l} \leq \fc, \;1\leq k \leq m. \label{eq:measurements_commonsupport}
\end{align}
To be clear, the $k$th column of $Y$ contains the low-pass Fourier coefficients of the signal $ X_k$. 

\subsubsection{Estimation via convex programming}
Note that we could just apply the approach based on TV-norm minimization described in Section~\ref{sec:superres_point_sources} to super-resolve each signal separately. However, our goal is to exploit the diversity of the amplitudes $A_{jk}$ to obtain a better estimate of the common support $T$ by processing all the data simultaneously. This can be achieved by leveraging a generalization of the TV norm, which we call \emph{group total-variation norm} (gTV). Formally, we define the gTV norm as
\begin{align*}
\normgTV{X} := \sup_{F:\mathbb{T}\rightarrow \C^{m},\normTwo{F\brac{t}} \leq 1 \; t\in \mathbb{T}} \sum_{k=1}^{m} \operatorname{Re}\keys{ \int_{\mathbb{T}} \overline{F_k \brac{t}} X_k\brac{\text{d}t} }.
\end{align*}
We would like to stress that the gTV norm is a well-known quantity. In fact, it is commonly used as a definition for the total variation of vector-valued signals~\cite{ambrosio2000functions}. Our intent is \emph{not} to propose a new name for it, but rather to use an abbreviation that distinguishes it from the TV norm while highlighting the connection to group sparsity. Intuitively, the gTV norm is the continuous counterpart of the mixed $\ell_1/\ell_2$ norm, which is used to promote group sparsity~\cite{group_lasso, mmv_tropp}. For an arbitrary matrix $M$
\begin{align*}
\normOneTwo{M} := \sum_j \normTwo{M_{j:}},
\end{align*}
where $M_{j:}$ denotes its $j$th row. If the measures $X_k$ are superpositions of point sources then $\normgTV{X}  = \normOneTwo{A}$. 

Let us provide some intuition as to why the $\ell_1 / \ell_2$ norm promotes joint sparsity. Consider a matrix $M$ which has both zero and nonzero elements. If we choose an element $M_{jk}$ from an empty row $M_{j:}$ and set its value to $\epsilon > 0$, then $\normOneTwo{M}$ increases by $\epsilon$. However, if the row is not empty and $\normTwo{M_{j:}} > 0$, then $\normOneTwo{M}$ increases by just $\epsilon^2/2\normTwo{M_{j:}}$ as long as $\epsilon \ll \normTwo{M_{j:}}$ (this follows from a Taylor expansion around $\epsilon=0$ of $(\normTwo{M_{j:}}^2 + \epsilon^2)^{1/2}$). It is consequently much cheaper to introduce new elements in non-empty rows than in empty ones, which promotes a common sparsity pattern in the columns. 

To super-resolve the signals jointly, we propose minimizing the continuous version of the $\ell_1/\ell_2$ norm subject to data constraints
\begin{align}
\label{eq:gTVnormMin}
\min_{\widetilde{X}} \;  \normgTV{\widetilde{X}} \quad \text{subject to} \quad
 \MAT{\mathcal{F}_{n} \, \widetilde{X}_1 & \mathcal{F}_{n} \, \widetilde{X}_2 & \cdots & \mathcal{F}_{n} \, \widetilde{X}_m } = Y,   
\end{align}
This cost function favors solutions that have a sparse common support, allowing to jointly super-resolve signals that are very challenging to super-resolve individually, as illustrated in Figure~\ref{fig:mm_polynomials}. We would like to note that this approach is equivalent to the one proposed by two recent preprints~\cite{mm_superres_1, mm_superres_2} from the perspective of atomic-norm minimization. 

\begin{figure}[t!]
\centering
\begin{tabular}{  >{\centering\arraybackslash}m{0.29\linewidth} >{\centering\arraybackslash}m{0.29\linewidth} 
>{\centering\arraybackslash}m{0.29\linewidth}  }
\includegraphics{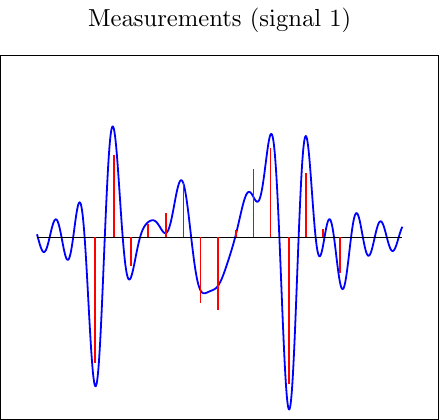}
&
\includegraphics{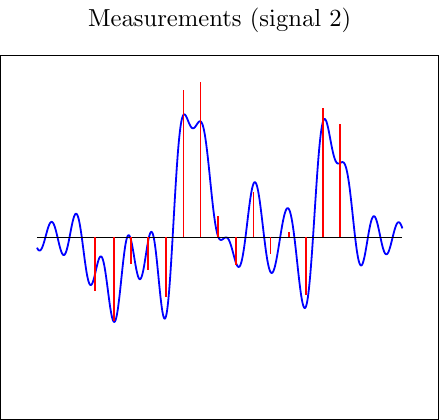}
&
\includegraphics{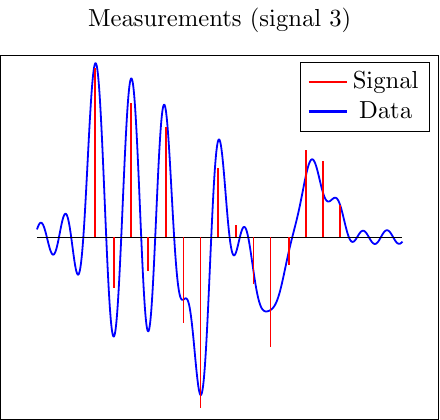}
\\
\includegraphics{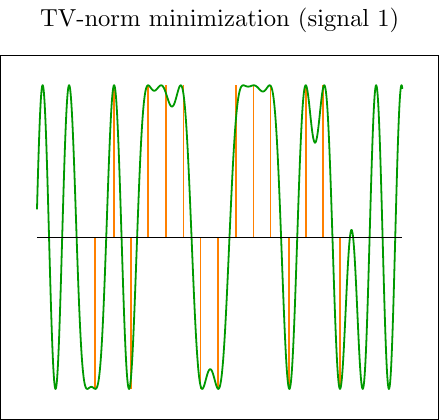}
&
\includegraphics{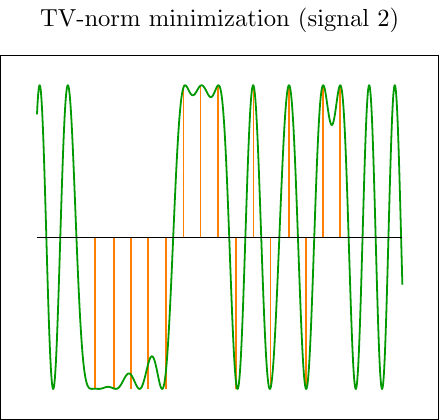}  
&
\includegraphics{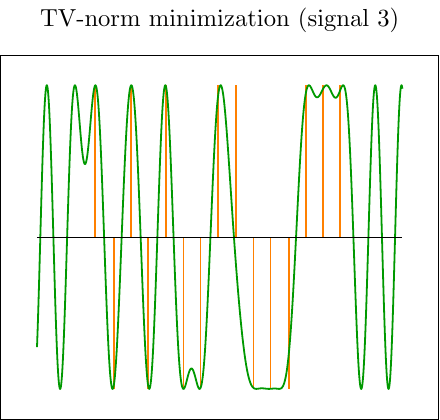}
\end{tabular}

\begin{tabular}{  >{\centering\arraybackslash}m{0.4\linewidth} >{\centering\arraybackslash}m{0.4\linewidth}  }
\includegraphics{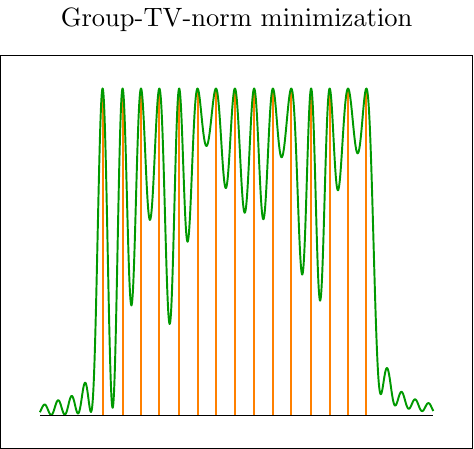}
&  
\includegraphics{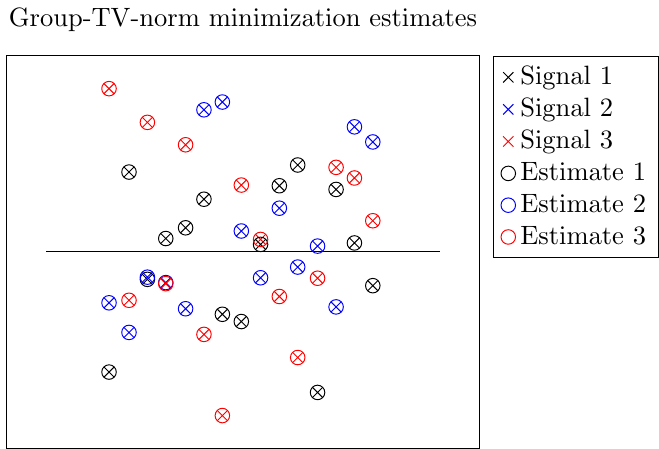}
\end{tabular}
\caption{Top row: Signals and corresponding low-pass data. Middle row: Support-locating polynomials obtained via TV-norm minimization. Bottom row: Support-locating polynomial obtained by gTV-norm minimization (left) and the resulting amplitude estimates (right, the three signals and their estimates are plotted simultaneously).}
\label{fig:mm_polynomials}
\end{figure}
\subsubsection{Implementation}
\label{sec:implementation_commonsupport}
If we discretize the domain, Problem~\eqref{eq:gTVnormMin} reduces to an $\ell_1/\ell_2$-norm minimization problem. However, as in the case of Problems~\eqref{eq:TVnormMin} and~\eqref{eq:primal_sinesspikes}, it is possible to obtain a solution over the continuous domain directly. 

\begin{lemma}
\label{lemma:dual_commonsupport}
The dual of Problem~\eqref{eq:gTVnormMin} is
\begin{align}
\label{eq:gTVnormMin_dual}
\max_{C \in \C^{n \times m}} \;  \<Y, C\>  \quad \text{subject to}
\quad \sup_{t} \sum_{k=1}^{m} \abs{ \brac{\mathcal{F}_{n}^{\ast} \, C_k} (t)}^2 \leq 1,
\end{align}
where $C_k$ denotes column $k$ of $C$. 
\end{lemma}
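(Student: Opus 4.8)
The plan is to run the same Lagrangian-duality computation that yields the dual~\eqref{eq:TV_normMin_dual} of the basic problem, simply carrying an extra column index. First I would attach a dual matrix $C\in\C^{n\times m}$ to the linear equality constraint and form the Lagrangian
\begin{align*}
L(\widetilde{X},C)=\normgTV{\widetilde{X}}+\PROD{Y-\mathcal{F}_{n}\widetilde{X}}{C},
\end{align*}
where $\mathcal{F}_{n}\widetilde{X}$ denotes the matrix whose $k$th column is $\mathcal{F}_{n}\widetilde{X}_k$ and $\PROD{A}{B}:=\operatorname{Re}\operatorname{Tr}(A^{\ast}B)=\sum_{k=1}^{m}\operatorname{Re}(A_k^{\ast}B_k)$ is the real matrix inner product. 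Minimizing over the primal variable---an $m$-tuple $\widetilde{X}=(\widetilde{X}_1,\dots,\widetilde{X}_m)$ of finite complex measures on $\mathbb{T}$---gives the dual function $g(C)=\inf_{\widetilde{X}}L(\widetilde{X},C)$. Pushing the adjoint of $\mathcal{F}_{n}$ onto $C$ column by column, via $\PROD{\mathcal{F}_{n}\widetilde{X}_k}{C_k}=\PROD{\widetilde{X}_k}{\mathcal{F}_{n}^{\ast}C_k}=\operatorname{Re}\int_{\mathbb{T}}\overline{(\mathcal{F}_{n}^{\ast}C_k)(t)}\,\widetilde{X}_k(\mathrm{d}t)$, one obtains
\begin{align*}
g(C)=\PROD{Y}{C}-\sup_{\widetilde{X}}\brac{\sum_{k=1}^{m}\operatorname{Re}\int_{\mathbb{T}}\overline{(\mathcal{F}_{n}^{\ast}C_k)(t)}\,\widetilde{X}_k(\mathrm{d}t)-\normgTV{\widetilde{X}}}.
\end{align*}

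The remaining supremum is the Fenchel conjugate of the gTV norm, evaluated at the $\C^{m}$-valued trigonometric polynomial $F:=(\mathcal{F}_{n}^{\ast}C_1,\dots,\mathcal{F}_{n}^{\ast}C_m)$, which is continuous on $\mathbb{T}$; thus the whole derivation reduces to computing this conjugate, and I claim it equals $0$ when $\sup_{t}\normTwo{F(t)}\le1$ and $+\infty$ otherwise. The first case is immediate from the \emph{definition} of the gTV norm: any $F$ with $\normTwo{F(t)}\le1$ for all $t$ is an admissible test function in that supremum, hence $\sum_{k}\operatorname{Re}\int\overline{F_k}\,\widetilde{X}_k\le\normgTV{\widetilde{X}}$ for every $\widetilde{X}$, and taking $\widetilde{X}=0$ shows the conjugate is exactly $0$. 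For the second case, if $\normTwo{F(t_0)}>1$ for some $t_0\in\mathbb{T}$, I would test with the scaled single point mass $\widetilde{X}_k=\lambda\,F_k(t_0)\,\delta_{t_0}$, $\lambda>0$: then $\normgTV{\widetilde{X}}=\lambda\normTwo{F(t_0)}$ (the $\ell_1/\ell_2$ norm of a one-row amplitude matrix), while the linear term equals $\lambda\normTwo{F(t_0)}^{2}$, so their difference is $\lambda\normTwo{F(t_0)}(\normTwo{F(t_0)}-1)\to+\infty$.

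Putting this together, $g(C)=\PROD{Y}{C}$ on the set $\keys{C\in\C^{n\times m}:\ \sup_{t}\sum_{k=1}^{m}\abs{(\mathcal{F}_{n}^{\ast}C_k)(t)}^{2}\le1}$ and $g(C)=-\infty$ off it, so maximizing $g$ over $C$ is precisely Problem~\eqref{eq:gTVnormMin_dual}. To close the argument I would record that there is no duality gap and that the dual optimum is attained, by the same reasoning invoked for~\eqref{eq:TV_normMin_dual}: the primal is feasible (take $\widetilde{X}=X$) and the dual feasible set has non-empty interior, since it contains the origin, so Fenchel--Rockafellar duality for the pairing between finite measures and $C(\mathbb{T},\C^{m})$ applies. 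The one genuinely delicate point---everything else being the scalar argument with an extra index---is this infinite-dimensional step: one must check that the conjugate computation is valid against \emph{all} finite measures (point masses suffice to force the value $+\infty$, but the bound $\sum_k\operatorname{Re}\int\overline{F_k}\,\widetilde{X}_k\le\normgTV{\widetilde{X}}$ must hold for general $\widetilde{X}$, which is exactly what the gTV definition provides), and that the constraint qualification legitimizes strong duality and attainment.
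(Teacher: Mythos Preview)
Your proof is correct and follows essentially the same Lagrangian-duality computation as the paper: form the Lagrangian with dual matrix $C$, push $\mathcal{F}_n^{\ast}$ onto the columns of $C$, and observe that the infimum over $\widetilde{X}$ is $-\infty$ unless $\sup_t\sum_k\abs{(\mathcal{F}_n^{\ast}C_k)(t)}^2\le 1$, in which case it equals $\PROD{Y}{C}$. Your version is slightly more explicit---naming the Fenchel conjugate, exhibiting the point-mass witness for the $+\infty$ case, and recording the constraint qualification for strong duality---but the argument is the same.
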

The proof of the lemma can be found in Section~\ref{proof:dual_commonsupport} of the appendix. By Proposition~\ref{prop:sdp-charact}, we can recast~\eqref{eq:gTVnormMin_dual} as the semidefinite program
\begin{align}
\label{eq:gTV_normMin_sdp}
 \max_{C \in \C^{n \times m}, \, \Lambda \in \C^{n \times n}} \;  \<Y, C\>  \quad \text{subject to} \quad \MAT{\Lambda & C\\ C^{\ast} & \Id_n} \succeq 0 , \quad \mathcal{T}^{\ast}\brac{\Lambda}= e_1,
\end{align}
where $\Id_n$ is the identity matrix of dimensions $n \times n$.

To recover the support of the primal solution from a dual solution $\widehat{C}$ we compute a generalization of the polynomial~\eqref{polynomial_P} defined in Section~\ref{sec:implementation}
\begin{align}
\label{eq:gTV_supportpol}
P_{\widehat{C}}(t)=\sum_{k=1}^m \abs{ \brac{\mathcal{F}_{n}^{\ast} \, \widehat{C}_k} (t)}^2.
\end{align}
The following lemma establishes that we can use this polynomial to locate the support of the primal solution. We defer the proof to Section~\ref{proof:pol_commonsupport} in the appendix. 
\begin{lemma}
\label{lemma:pol_commonsupport}
Let $\widehat{X}$ be a solution to Problem~\eqref{eq:gTVnormMin} such that  $\widehat{X}_{k}(t) = \sum_{t_j \in \widehat{T} } \hat{A}_{jk} \delta_{t_j}$ for $1\leq k \leq m$, where $\hat{A} \in \C^{\abs{\widehat{T}} \times m}$ and $\widehat{T}$ denotes the support of the solution. Then, for any corresponding dual solution $\abs{C}$, the support-locating polynomial satisfies
\begin{align*}
P_{\widehat{C}}(t_j)=1 \quad \text{ for all } t_j \in \widehat{T}.
\end{align*}
\end{lemma}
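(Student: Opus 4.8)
The plan is to follow the same strong-duality / complementary-slackness argument used for the scalar TV-norm problem in Section~\ref{sec:implementation}, augmented with the extra Cauchy--Schwarz step that the $\ell_1/\ell_2$ structure requires. First I would establish that there is no duality gap between Problem~\eqref{eq:gTVnormMin} and its dual~\eqref{eq:gTVnormMin_dual}: the interior of the dual feasible set is nonempty (it contains $C=0$, since $0<1$ strictly), which is exactly the Slater-type condition invoked in~\cite{superres} and in Section~\ref{sec:implementation}. Hence strong duality gives $\normgTV{\widehat{X}} = \<Y,\widehat{C}\>$ for the given primal solution $\widehat{X}$ and any corresponding dual solution $\widehat{C}$; the semidefinite reformulation~\eqref{eq:gTV_normMin_sdp}, whose feasible region is compact, can be used to make the existence/attainment bookkeeping clean.

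Next I would rewrite this optimal value in the measure--polynomial pairing form. Using that $\widehat{X}$ is primal feasible, i.e.\ $Y_k = \mathcal{F}_{n}\,\widehat{X}_k$ for $1\le k\le m$, together with the adjoint relation $\<\mathcal{F}_{n}\,\widehat{X}_k,\widehat{C}_k\> = \<\widehat{X}_k,\mathcal{F}_{n}^{\ast}\widehat{C}_k\>$ (pairing a measure against a continuous function, exactly as in Section~\ref{sec:implementation}), one obtains
\begin{align*}
\normgTV{\widehat{X}} \;=\; \sum_{k=1}^m \operatorname{Re}\sqbr{\int_{\mathbb{T}}\overline{q_k(t)}\,\widehat{X}_k(\text{d}t)}, \qquad q_k := \mathcal{F}_{n}^{\ast}\widehat{C}_k .
\end{align*}
Writing $Q(t):=(q_1(t),\dots,q_m(t))^{\transp}$, the dual constraint in~\eqref{eq:gTVnormMin_dual} says precisely that $\normTwo{Q(t)}\le 1$ for every $t\in\mathbb{T}$, and by definition $P_{\widehat{C}}(t)=\normTwo{Q(t)}^2$. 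Substituting $\widehat{X}_k=\sum_{t_j\in\widehat{T}}\hat{A}_{jk}\delta_{t_j}$ and exchanging the order of summation turns the right-hand side into $\sum_{t_j\in\widehat{T}}\operatorname{Re}\<Q(t_j),\hat{A}_{j:}\>$, where $\hat{A}_{j:}$ is the $j$th row of $\hat{A}$.

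Finally I would close the loop with a termwise Cauchy--Schwarz bound: $\operatorname{Re}\<Q(t_j),\hat{A}_{j:}\>\le \normTwo{Q(t_j)}\,\normTwo{\hat{A}_{j:}}\le\normTwo{\hat{A}_{j:}}$, so summing over $j$ gives $\normgTV{\widehat{X}}\le \sum_{t_j\in\widehat{T}}\normTwo{\hat{A}_{j:}}=\normOneTwo{\hat{A}}=\normgTV{\widehat{X}}$. Since the two ends coincide, every inequality must be an equality; in particular $\normTwo{Q(t_j)}\,\normTwo{\hat{A}_{j:}}=\normTwo{\hat{A}_{j:}}$ for each $j$, and because $t_j\in\widehat{T}$ forces $\hat{A}_{j:}\neq 0$, we conclude $\normTwo{Q(t_j)}=1$, i.e.\ $P_{\widehat{C}}(t_j)=\sum_{k=1}^m\abs{q_k(t_j)}^2=1$, which is the claim. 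The only two places that need genuine care are (i) justifying strong duality for the infinite-dimensional primal over measures, handled as in~\cite{superres} through the finite-dimensional semidefinite program~\eqref{eq:gTV_normMin_sdp}, and (ii) keeping the complex conjugations consistent when passing between the inner product on $\C^{n}$, the measure--function pairing, and the vector Cauchy--Schwarz step; everything else is routine.
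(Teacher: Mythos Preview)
Your proposal is correct and follows essentially the same route as the paper: establish strong duality via a Slater-type condition, rewrite $\<Y,\widehat{C}\>$ as $\sum_{t_j\in\widehat{T}}\operatorname{Re}\<Q(t_j),\hat{A}_{j:}\>$ through the measure--polynomial pairing, and then force equality in the termwise Cauchy--Schwarz bound. The only difference is that the paper packages the Cauchy--Schwarz step as a separate little lemma and goes one step further to deduce $Q(t_j)=\hat{A}_{j:}/\normTwo{\hat{A}_{j:}}$ (the direction, not just the magnitude), which is what the paragraph following the lemma statement refers to; your argument already contains this, since equality in Cauchy--Schwarz together with $\operatorname{Re}\<Q(t_j),\hat{A}_{j:}\>=\normTwo{\hat{A}_{j:}}$ forces it.
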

The careful reader will observe that the lemma does not imply that the support-locating polynomial cannot be equal to one all over the support and hence completely uninformative. However, in the proof of the lemma we show that the polynomial $ \mathcal{F}_{n}^{\ast} \, \widehat{C}_k $, which is restricted to the hypercylinder of radius one by constraint~\eqref{eq:gTVnormMin_dual}, is equal to $A_{jk}/\normTwo{A_{j:}}$ on $\widehat{T}$ (recall that $A_{j:}$ is the $j$th row of $A$). This means that except in pathological cases where the polynomial stays on the surface of the hypercylinder over all of its domain, the support-locating polynomial is informative. Indeed, this is what we observe in numerical simulations. See Section~4 of~\cite{superres} for a related discussion.

Finally, the procedure to estimate the signals is as follows.
\begin{enumerate}
\item Solve the finite-dimensional semidefinite program~\eqref{eq:gTV_normMin_sdp}.
\item Construct the support-locating polynomial $\eqref{eq:gTV_supportpol}$. Set $\widehat{T}$ to the set of points where its magnitude is equal to one.
\item Estimate the amplitudes of the individual signals by solving the corresponding systems of equations.
\end{enumerate}
Figure~\ref{fig:mm_polynomials} shows the polynomial $P_{\widehat{C}}$ for an example where $m=3$, $s=15$, $\fc=40$ and the signal amplitudes are real valued and generated at random. The minimum separation of the common support is $0.7 \lambda_c$. The polynomial $P_{\widehat{C}}$ obtained from~\eqref{eq:gTV_normMin_sdp} allows to locate the support of the signal with great precision. In contrast, the individual support-locating polynomials obtained from solving the individual TV-norm minimization problems do not allow to locate the support: they miss some spikes and also produce false positives.~\footnote{A Matlab script reproducing this example is available at \url{http://www.cims.nyu.edu/~cfgranda/scripts/superres_common_support.m}.}

\subsubsection{Theoretical analysis: Dual certificate}
\label{sec:dual_cert_commonsupport}
As in the case of Problems~\eqref{eq:TVnormMin} and~\eqref{eq:primal_sinesspikes}, the dual certificate corresponding to Problem~\eqref{eq:gTVnormMin} provides some useful intuition as to the performance we can expect from the optimization-based method depending on the structure of the signals that generate the data. 
\begin{lemma}
\label{lemma:cert_common_support}
For $X$ defined by~\eqref{eq:model_common_support} and $Y$ defined by~\eqref{eq:measurements_commonsupport}, if there exists an $m$-dimensional low-pass polynomial $Q: \mathbb{T} \rightarrow \C^{m}$, 
\begin{align*}
Q_k (t) = \brac{\mathcal{F}_n^{\ast} C_k} \brac{t} = \sum_{l = -\fc}^{\fc} C_{lk} e^{i2\pi lt} , \quad 1 \leq k \leq m, \; C = \MAT{C_1 & \cdots & C_m} \in \C^{n \times m},
\end{align*}
such that
\begin{align*}
& Q_k(t_j)  = \frac{A_{jk}}{\sqrt{\sum_{k'=1}^m \abs{A_{jk'}}^2}}, \quad t_j \in T, \\
& \max_{ t \in T^c} \sum_{k=1}^{m} \abs{ Q_k(t) }^2  < 1 ,
\end{align*}
then $X$ is the unique solution to Problem~\eqref{eq:gTVnormMin}.
\end{lemma}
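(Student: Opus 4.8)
The plan is to run the standard dual-certificate argument, exactly parallel to the proof of Proposition~\ref{prop:dualcert}, that of Lemma~\ref{lemma:cert_sinesspikes}, and the argument in the appendix of~\cite{superres}. Introduce the pairing $\PROD{Q}{X} := \sum_{k=1}^{m}\operatorname{Re}\sqbr{\int_{\mathbb{T}}\overline{Q_k(t)}\,X_k(\text{d}t)}$ between a continuous $\C^m$-valued function $Q$ and a $\C^m$-valued measure $X$; with this notation $\normgTV{X} = \sup_{\normTwo{F(t)}\le 1}\PROD{F}{X}$. I will show that $Q$ is a subgradient of $\normgTV{\cdot}$ at $X$ that is moreover orthogonal to every feasible perturbation of~\eqref{eq:gTVnormMin}, which forces $X$ to be optimal; the strict inequality $\sum_{k}\abs{Q_k(t)}^2<1$ on $T^c$ then upgrades optimality to uniqueness.

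\textbf{Optimality.} First I would check $\sup_{t\in\mathbb{T}}\normTwo{Q(t)}\le 1$: on $T$ the first hypothesis gives $\normTwo{Q(t_j)}^2 = \sum_k\abs{A_{jk}}^2\big/\sum_{k'}\abs{A_{jk'}}^2 = 1$, and on $T^c$ this is exactly the second hypothesis. Next, using $X_k = \sum_{t_j\in T}A_{jk}\delta_{t_j}$ and the interpolation conditions (note each summand $\overline{Q_k(t_j)}A_{jk}$ is real),
\begin{align*}
\PROD{Q}{X} = \sum_{t_j\in T}\sum_{k=1}^{m}\frac{\abs{A_{jk}}^2}{\sqrt{\sum_{k'}\abs{A_{jk'}}^2}} = \sum_{t_j\in T}\sqrt{\sum_{k}\abs{A_{jk}}^2} = \normOneTwo{A} = \normgTV{X}.
\end{align*}
Thus the continuous function $Q$, which is feasible for the supremum defining the gTV norm, attains it at $X$; equivalently $Q$ is a subgradient of $\normgTV{\cdot}$ at $X$. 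Now let $\widetilde X$ be any feasible point of~\eqref{eq:gTVnormMin}. Since each $Q_k = \mathcal{F}_n^{\ast}C_k$ is low pass, $\PROD{Q}{\widetilde X}$ is a fixed linear functional of $\mathcal{F}_n\widetilde X = Y = \mathcal{F}_n X$, so $\PROD{Q}{\widetilde X} = \PROD{Q}{X} = \normgTV{X}$; combining with $\normgTV{\widetilde X}\ge\PROD{Q}{\widetilde X}$ gives $\normgTV{\widetilde X}\ge\normgTV{X}$, so $X$ solves~\eqref{eq:gTVnormMin}.

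\textbf{Uniqueness.} Let $\widehat X$ be an arbitrary solution, so $\normgTV{\widehat X} = \normgTV{X} = \PROD{Q}{\widehat X}$, i.e.\ $Q$ attains the supremum defining $\normgTV{\widehat X}$. Write $\widehat X$ in polar form: let $\mu$ be its total-variation measure and $\theta:\mathbb{T}\to\C^m$ a Borel field with $\widehat X_k = \theta_k\,\mu$ and $\normTwo{\theta(t)} = 1$ for $\mu$-a.e.\ $t$, so $\normgTV{\widehat X} = \mu(\mathbb{T})$ (see~\cite{ambrosio2000functions}). Then
\begin{align*}
\mu(\mathbb{T}) = \PROD{Q}{\widehat X} = \int_{\mathbb{T}}\operatorname{Re}\brac{\sum_{k=1}^{m}\overline{Q_k(t)}\,\theta_k(t)}\mu(\text{d}t) \le \int_{\mathbb{T}}\normTwo{Q(t)}\,\normTwo{\theta(t)}\,\mu(\text{d}t) \le \mu(\mathbb{T})
\end{align*}
by Cauchy--Schwarz and $\normTwo{Q}\le 1$, $\normTwo{\theta} = 1$ a.e. Both inequalities are therefore equalities; the second forces $\normTwo{Q(t)} = 1$ for $\mu$-a.e.\ $t$, and since $\normTwo{Q(t)}<1$ on $T^c$ we get $\mu(T^c) = 0$, i.e.\ $\widehat X$ is supported on $T$, say $\widehat X_k = \sum_{t_j\in T}\widehat A_{jk}\delta_{t_j}$. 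Finally $\mathcal{F}_n(\widehat X_k - X_k) = 0$ for every $k$ reads $\sum_{t_j\in T}(\widehat A_{jk}-A_{jk})e^{-i2\pi l t_j} = 0$ for all $\abs l\le\fc$, and the matrix $(e^{-i2\pi l t_j})_{\abs l\le\fc,\,t_j\in T}$ has full column rank because $\abs T\le n$. (In fact $\abs T\le n$ is automatic here: $g(t) := 1-\sum_k\abs{Q_k(t)}^2$ is a nonnegative trigonometric polynomial of degree $2\fc$, not identically zero since $g>0$ on $T^c\neq\nil$, and each $t_j$ is a global minimum of $g$ with $g(t_j)=0$, hence a zero of multiplicity at least two, so $2\abs T\le 4\fc$.) Therefore $\widehat A = A$ and $\widehat X = X$.

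\textbf{Where the difficulty is.} There is no real obstacle in this lemma: it is the ``easy direction'' of the duality, asserting only that exhibiting a certificate $Q$ is sufficient, and it does not address the harder (and here unclaimed) question of whether such a $Q$ exists for a given support geometry --- for that one would build a vector-valued analogue of the interpolation kernel of Section~\ref{proof:noiseless}, as in the related works~\cite{mm_superres_1,mm_superres_2}. The one point in the argument above that calls for some care is the equality analysis in the uniqueness step: passing from ``$Q$ attains the supremum defining $\normgTV{\widehat X}$'' to ``$\widehat X$ is supported on $T$'' uses the polar decomposition of a vector-valued measure together with the Cauchy--Schwarz equality case, but this is entirely parallel to the scalar total-variation argument in the appendix of~\cite{superres}.
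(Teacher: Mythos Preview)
Your proof is correct and follows the same dual-certificate strategy as the paper: verify $\normTwo{Q(t)}\le 1$, compute $\PROD{Q}{X}=\normgTV{X}$, use that $Q$ is low-pass to get $\PROD{Q}{\widetilde X}=\PROD{Q}{X}$ for every feasible $\widetilde X$, and finish uniqueness with Vandermonde injectivity on $T$. The one place your execution differs is the step showing any optimizer is supported on $T$. The paper writes a competitor as $X+H$ with $\mathcal{F}_n H_k=0$, uses the separability $\normgTV{X+H}=\normgTV{(X+H)_T}+\normgTV{H_{T^c}}$, and argues directly that $\PROD{Q}{H_{T^c}}<\normgTV{H_{T^c}}$ whenever $H_{T^c}\neq 0$, since $\normTwo{Q(t)}<1$ on $T^c$. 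You instead invoke the polar decomposition of the vector-valued measure and the equality case of Cauchy--Schwarz to force $\normTwo{Q(t)}=1$ $\mu$-a.e. Both arguments are standard; the paper's is a touch more elementary (no polar decomposition needed), while yours is arguably cleaner about why a general measure---not assumed atomic---must concentrate on $T$. Your side remark that $\abs{T}\le 2\fc$ follows automatically from the existence of $Q$ (via the double zeros of $1-\sum_k\abs{Q_k}^2$) is a nice observation the paper does not make explicit.
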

The dual certificate provided by this lemma (see Section~\ref{proof:cert_common_support} in the appendix for the proof) is an $m$-dimensional low-pass polynomial $Q$ that interpolates points from the $m$-dimensional unit hypersphere $\mathcal{S}^m$ on the support of the signal while remaining strictly inside the hypercylinder $\mathbb{T} \times \mathcal{S}^m$ elsewhere. If the amplitudes of the $m$ signals are in a general position, then it is plausible that interpolating these points may become easier as the dimension $m$ of the hypercylinder increases. This would imply that using a larger number of signals may allow to recover common supports with smaller minimum separations. This is confirmed by the numerical simulations in Figure~\ref{fig:mm_phase_transitions}, which show that exact recovery occurs for decreasing values of the minimum distance as we increase $m$.

\subsubsection{Numerical results}

Figure~\ref{fig:mm_phase_transitions} shows the fraction of successes achieved by gTV minimization for different values of the minimum distance, of the number of signals $m$ and of the number of point sources in each signal $\abs{T}$. As in the case of TV minimization for a single signal (see Figure~\ref{fig:limits}), a phase transition occurs at a certain value of the minimum distance. This critical value of the minimum distance decreases as $m$ increases, saturating at $\lambda_c / 2$. Below this value, there exist signals that may lie almost in the null space of the measurement operator (note that this is a stronger statement than the \emph{difference} of two signals being almost in the null space, see Section~\ref{sec:minimum_separation}). Characterizing the minimum separation at which gTV-norm minimization succeeds for amplitudes in a general position is an intriguing research problem.

\begin{figure}[t]
\centering
\begin{tabular}{  >{\centering\arraybackslash}m{0.45\linewidth} >{\centering\arraybackslash}m{0.45\linewidth}  }
 $m$ = 1 (Real amplitudes) & $m$ = 1 (Complex amplitudes)\\
\vspace{0.2cm}
\includegraphics{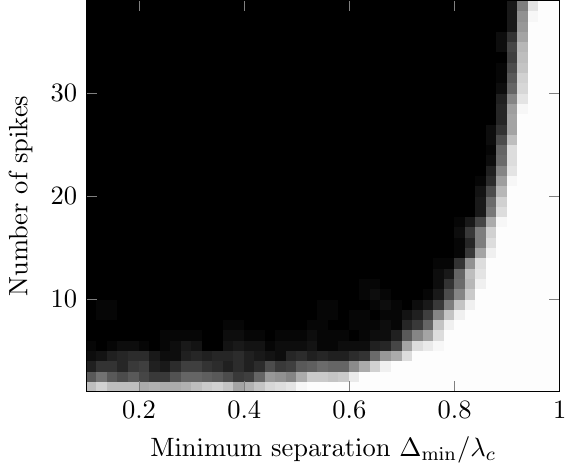}
 &  
\includegraphics{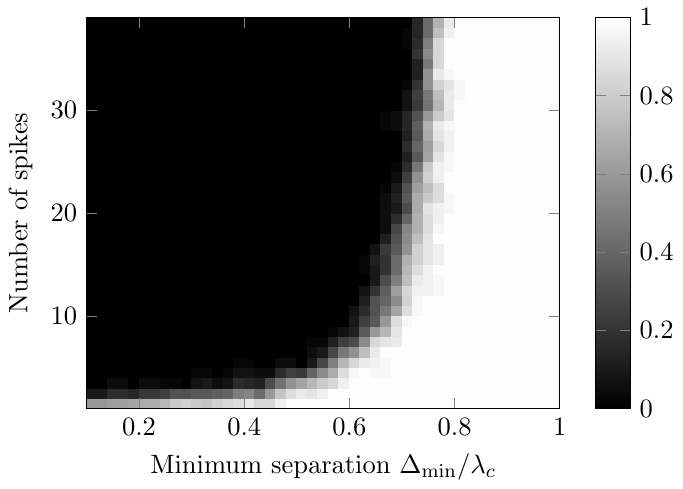}
\\
   $m$ = 2 (Complex amplitudes) & $m$ = 10 (Complex amplitudes)\\
\includegraphics{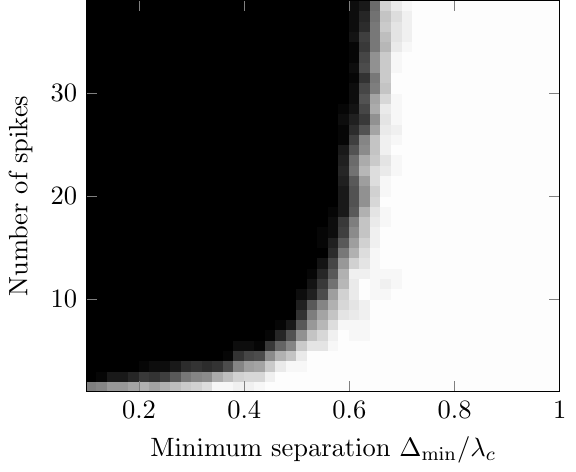}
 &
\includegraphics{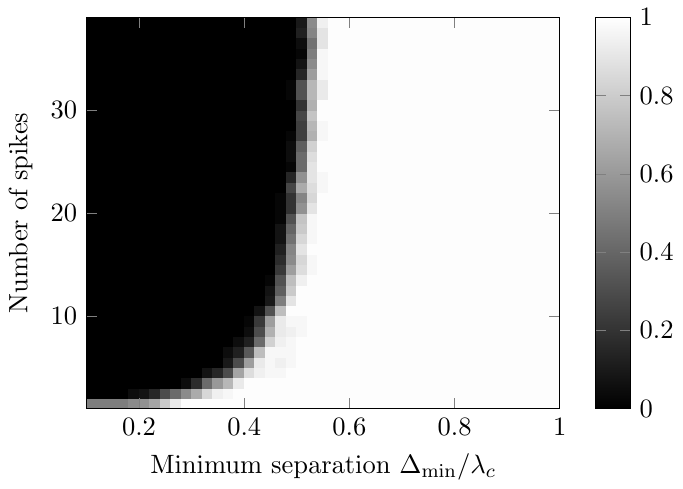}
\end{tabular}
\caption{Graphs showing the fraction of times Problem~\eqref{eq:gTVnormMin} achieves exact recovery over 10 trials with random signs and supports for different values of $m$. As $m$ grows the minimum separation at which exact recovery occurs tends to $\lambda_c / 2$. The simulations are carried out using CVX~\cite{cvx}.}
\label{fig:mm_phase_transitions}
\end{figure}
\section{Proof of Proposition~\ref{prop:dualcert}}
\label{proof:noiseless}

Our aim is to construct a  low pass polynomial that interpolates an arbitrary sign pattern $v \in \C^{\abs{T}}$ on a set of points $T$ separated by a minimum distance of $\optvalue / \fc$. As a first idea, we could consider interpolating the sign pattern with a Dirichlet kernel,
\begin{align*}
K \brac{f,t} := \frac{1}{2f+1} \sum_{k = -f}^{f}  e^{i2\pi k t} 
\end{align*}
which can also be written as
\begin{align}
K \brac{f,t} =
  \begin{cases}
   1 & \text{if } t=0\\
   \frac{\sin \brac{\brac{2 f+1} \pi t}}{\brac{2 f+1}\sin \brac{\pi t}} & \text{otherwise } 
  \text{.}
  \end{cases}
\label{eq:Dirichlet_kernel} 
\end{align}
The result of the interpolation would satisfy~\eqref{eq:q_cond_lowfreq} and~\eqref{eq:q_cond_interp} by construction, but controlling its magnitude on the off-support would be problematic because of the slow decay of the tail of $K$. An alternative is to use a faster-decaying kernel instead, as in~\cite{superres} which uses the fourth power of the Dirichlet kernel, also known as Jackson kernel. Taking the fourth power of a kernel is equivalent to repeatedly convolving it with itself in the frequency domain, which smooths out the discontinuities of its spectrum. The resulting kernel is consequently not as \emph{spiky} at the origin, but has a faster decaying tail. Generalizing this idea, we can take the product of $p$ kernels with different bandwidths $\gamma_1 \fc$, $\gamma_2 \fc$, \dots, $\gamma_p \fc$  to achieve a better tradeoff between the behavior of the resulting kernel at the origin and the decay of its tail. This yields a kernel of the form
\begin{align}
K_{\gamma}\brac{t} := \prod_{j=1}^{p}K \brac{\gamma_j \fc, t} = \sum_{k = -\fc}^{\fc} c_k e^{i2\pi k t} 
\label{eq:interpolation_kernel} 
\end{align}
where $c \in \C^{n}$ is the convolution of the Fourier coefficients of $K\brac{\gamma_1 \fc,t} ,K\brac{\gamma_2 \fc,t}, \dots K\brac{\gamma_p \fc,t} $. As long as $\sum_{j=1}^{p}\gamma_j=1$, the cut-off frequency of the kernel is indeed $\fc$. We fix $p=3$ and $\gamma_1=\gammaOne$, $\gamma_2=\gammaTwo$ and $\gamma_3=\gammaThree$. Figure~\ref{fig:kernel_gamma} shows $K_{\gamma}$ and its spectrum.

\begin{figure}
\begin{tabular}{ >{\centering\arraybackslash}m{0.31\linewidth} >{\centering\arraybackslash}m{0.32\linewidth}   >{\centering\arraybackslash}m{0.32\linewidth} }
\vspace{0.2cm}
\includegraphics{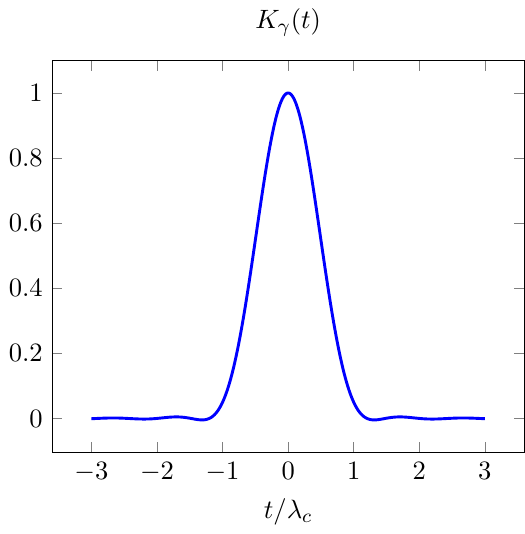}
&
\vspace{0.2cm}
\includegraphics{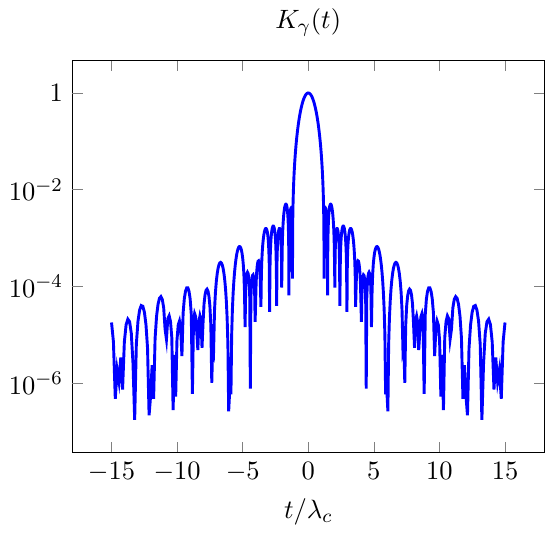}
&
\vspace{-0.21cm}
\includegraphics{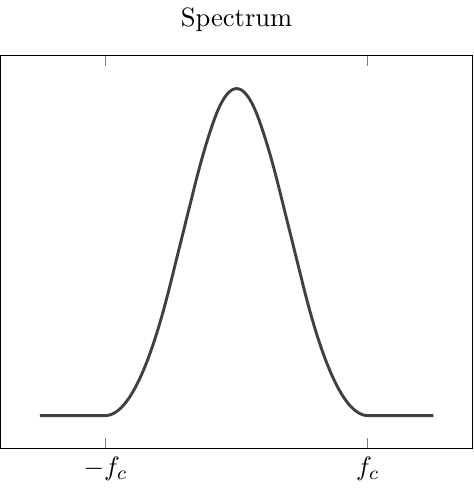}
\end{tabular}
\caption{Interpolating kernel used in the proof (left), along with its asymptotic decay (center) and its low-pass spectrum (right). It is the product of three Dirichlet kernels with cut-off frequencies $\gammaOne \, \fc$, $ \gammaTwo \, \fc$ and $\gammaThree \, \fc$. }
\label{fig:kernel_gamma}
\end{figure}

Interpolating the sign pattern with an adequately selected kernel is not sufficient to produce a valid construction. The reason is that the magnitude of the polynomial will tend to exceed one near the elements of the support $T$. This can be avoided, however, by forcing the derivative of the kernel to be zero at those points, an idea introduced in~\cite{superres}. As a result, the magnitude of the polynomial has a local extremum at the interpolation point and obeys condition~\eqref{eq:q_quadratic} for a sufficiently large minimum separation, as illustrated by Figure~\ref{fig:convexity}. In order to enforce the extra constraint on the derivative of 
$q$, we need more degrees of freedom in the construction. For this purpose, we incorporate the derivative of the kernel, so that the construction is of the form
\begin{equation}
  q(t)  = \sum_{t_j \in T} \alpha_j K_{\gamma}(t-t_j) + \beta_j K_{\gamma}^{\brac{1}}(t-t_j), 
  \label{eq:dual_cert}
\end{equation}
where $\alpha, \beta \in \C^{\abs{T}}$ are coefficient sequences satisfying
\begin{align}
q(t_k) &= \sum_{t_j \in T} \alpha_j K_{\gamma}\brac{t_k-t_j} + \beta_j
K_{\gamma}^{\brac{1}}\brac{t_k-t_j} = v_k, \quad   t_k \in T,   \label{eq:interp1}\\
q_R^{\brac{1}}(t_k) + i q_I^{\brac{1}}(t_k) &= \sum_{t_j \in T} \alpha_j K_{\gamma}^{\brac{1}}\brac{t_k-t_j} + \beta_j
K_{\gamma}^{\brac{2}}\brac{t_k-t_j} = 0, \quad   t_k \in T,  \label{eq:interp2}
\end{align}
and $q_R$ and $q_I$ are the real and imaginary parts of $q$.

\begin{figure}[t]
\begin{tabular}{ >{\centering\arraybackslash}m{0.47\linewidth} >{\centering\arraybackslash}m{0.47\linewidth} }
\includegraphics{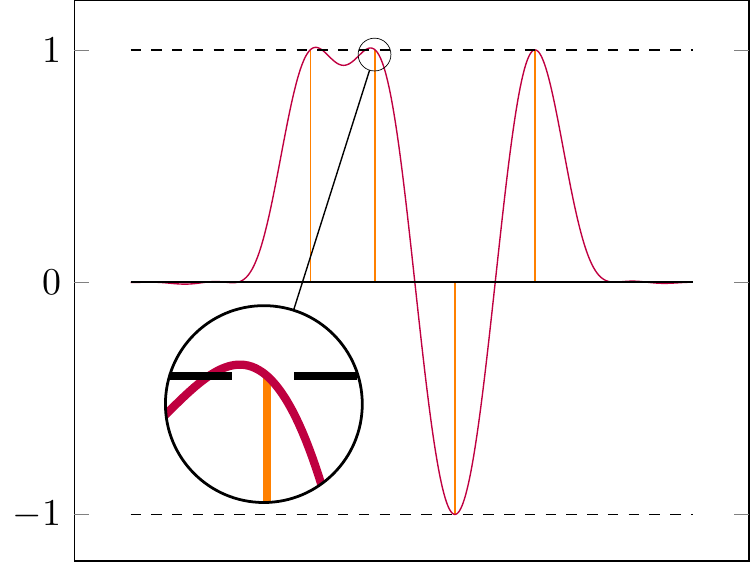}
&
\includegraphics{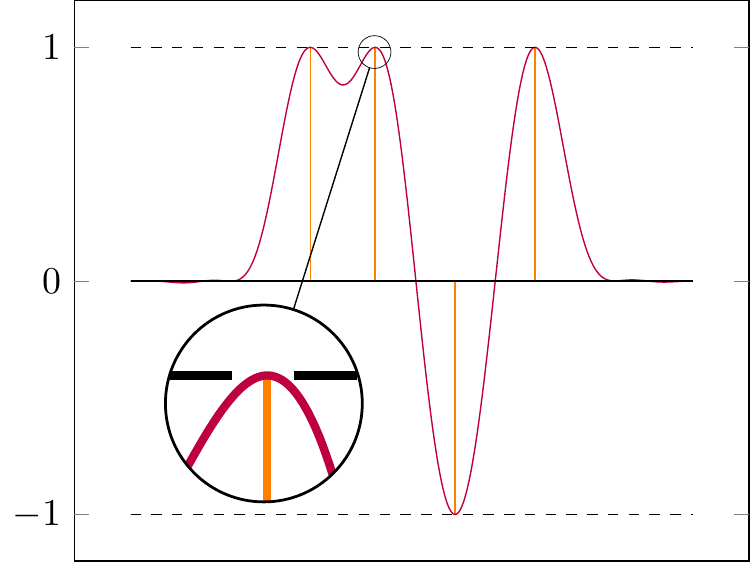}
\end{tabular}
\caption{Dual-polynomial if we only use $K_{\gamma}$ to interpolate the sign pattern (left), as opposed to also incorporating its derivative into the construction and forcing the derivative of the polynomial to be zero at the interpolation points (right).}
\label{fig:convexity}
\end{figure}

This construction trivially satisfies~\eqref{eq:q_cond_lowfreq} because both $K_{\gamma}$ and $K_{\gamma}^{\brac{1}}$ are low pass. The following lemma, proved in Section~\ref{proof:coeffs}, establishes that~\eqref{eq:q_cond_interp} holds and derives some bounds on the interpolation coefficients.
\begin{lemma}
\label{lemma:coeffs}
Under the hypotheses of Theorem~\ref{theorem:noiseless}, there exist coefficient vectors $\alpha$ and $\beta$ obeying~\eqref{eq:interp1} and~\eqref{eq:interp2}, such that
\begin{equation}
 \label{bound_alpha}
\begin{aligned}
  \normInf{\alpha}  & \leq \alpha^{\text{U}} := 1+ 2.370 \; 10^{-2} ,\\
  \normInf{\beta}& \leq \beta^{\text{U}} :=  4.247 \; 10^{-2} \,
  \lambda_c.
\end{aligned}
\end{equation}
In addition, if $v_1=1$,
\begin{equation}
 \label{bound_Re_alpha1}
\begin{aligned}
  \operatorname{Re}{\alpha_1} & \geq \alpha^{\text{L}}_R:= 1-2.370 \; 10^{-2}, \qquad 
  \abs{\operatorname{Im}{\alpha_1}} & \leq \alpha^{\text{U}}_I:=2.370 \; 10^{-2}.
\end{aligned}
\end{equation} 
\end{lemma}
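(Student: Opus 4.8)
The plan is to read the interpolation conditions~\eqref{eq:interp1}--\eqref{eq:interp2} as a single block linear system for the coefficient vectors and to invert it by a Neumann-series perturbation argument. Collecting the values of the interpolation kernel and its derivatives at the pairwise differences into matrices $(D_\ell)_{kj} := K_\gamma^{(\ell)}(t_k - t_j)$ for $\ell = 0, 1, 2$, the system~\eqref{eq:interp1}--\eqref{eq:interp2} becomes
\[
\begin{pmatrix} D_0 & D_1 \\ D_1 & D_2 \end{pmatrix}
\begin{pmatrix} \alpha \\ \beta \end{pmatrix}
=
\begin{pmatrix} v \\ 0 \end{pmatrix} .
\]
Since $K_\gamma$ is even with $K_\gamma(0) = 1$, so that $K_\gamma^{(1)}$ is odd with $K_\gamma^{(1)}(0) = 0$, and $K_\gamma^{(2)}(0) = -\kappa$ with $\kappa = \Theta(\fc^2)$ computable explicitly from $\gamma_1,\gamma_2,\gamma_3$, a diagonal rescaling of the $\beta$-block by a factor of order $\lambda_c$ together with a matching rescaling of the second block of equations turns the system matrix into $I - E$, where every entry of $E$ is, up to one of these fixed normalization constants, a value $K_\gamma^{(\ell)}(t_k - t_j)$ with $k \neq j$.

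The first and principal step is to show that $\normOp{E}$ is small, say in the $\ell_\infty$-induced operator norm, which suffices both for invertibility and for the desired $\ell_\infty$ bounds. Each row of $E$ is controlled by a sum of the form $\sum_{k \neq j} \abs{K_\gamma^{(\ell)}(t_k - t_j)}$, and here the hypothesis $\Delta(T) \geq \optvalue\,\lambda_c$ enters: the $m$-th closest support point to $t_j$ on either side of the circle lies at distance at least $m\,\optvalue\,\lambda_c$, so each such row sum is dominated by $2 \sum_{m \geq 1} \sup\{ \abs{K_\gamma^{(\ell)}(t)} : \abs{t} \geq m\,\optvalue\,\lambda_c \}$. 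I would split this lattice sum into a \emph{near} part (finitely many small $m$), bounded by the explicit non-asymptotic estimates for $K_\gamma$, $K_\gamma^{(1)}$ and $K_\gamma^{(2)}$ established in Section~\ref{sec:kernel}, and a \emph{far} part (large $m$), bounded by the asymptotic decay estimates for the product of Dirichlet kernels and its first two derivatives, valid uniformly once $\fc \geq \minfc$. Adding the two contributions and folding in the rescaling factors produces an explicit numerical bound on $\normOp{E}$ that is comfortably below $1$ --- small enough that $\normOp{E}/(1 - \normOp{E}) \leq 2.370\cdot 10^{-2}$.

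With $\normOp{E} < 1$ in hand, $I - E$ is invertible and $(\alpha, \beta)^\transp = (I - E)^{-1} (v, 0)^\transp = \sum_{k \geq 0} E^k (v,0)^\transp$. The zeroth-order term gives $\alpha = v$ and $\beta = 0$, while the remainder $\sum_{k \geq 1} E^k (v,0)^\transp$ has $\ell_\infty$ norm at most $\bigl(\normOp{E}/(1-\normOp{E})\bigr)\normInf{v} \leq 2.370\cdot 10^{-2}$; this gives $\normInf{\alpha} \leq \alpha^{\text U}$ immediately, and undoing the order-$\lambda_c$ rescaling on the $\beta$-block gives $\normInf{\beta} \leq \beta^{\text U}$ (the slightly larger numerical constant there reflecting that the rescaled $\beta$ absorbs coupling from both off-diagonal blocks). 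Finally, when $v_1 = 1$ we have $\alpha_1 = 1 + r_1$ where $r_1$ is the first component of the remainder, so $\abs{r_1} \leq 2.370\cdot 10^{-2}$; a complex number within $2.370\cdot 10^{-2}$ of $1$ has real part at least $1 - 2.370\cdot 10^{-2}$ and modulus of imaginary part at most $2.370\cdot 10^{-2}$, which is exactly~\eqref{bound_Re_alpha1}.

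The crux is entirely quantitative. Because the target minimum separation $\optvalue\,\lambda_c$ is aggressive, the tail sums $\sum_{m\geq 1}\sup_{\abs{t}\geq m\,\optvalue\,\lambda_c}\abs{K_\gamma^{(\ell)}(t)}$ have to be bounded sharply; this is what forces the careful non-asymptotic control of $K_\gamma$ and its first two derivatives developed in Section~\ref{sec:kernel}, and the near/far split so that neither the finitely many explicitly-evaluated near terms nor the asymptotically-bounded far terms overshoot the budget. Once those kernel bounds are available, the Neumann inversion and the bookkeeping of the rescalings are routine.
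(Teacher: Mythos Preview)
Your overall strategy --- write the interpolation conditions as a block system in $(\alpha,\beta)$ and invert by perturbation --- is exactly right, and the row-sum / near--far control of $\sum_{k\neq j}|K_\gamma^{(\ell)}(t_k-t_j)|$ via the kernel estimates of Section~\ref{sec:kernel} is precisely what is needed. The gap is quantitative, and it sits in the claim that the rescaled full block matrix is $I-E$ with $\normInfInf{E}/(1-\normInfInf{E})\le 2.370\cdot10^{-2}$.

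After any diagonal rescaling, the $(2,2)$ block of $E$ has off-diagonal row sum equal to
\[
\frac{1}{|K_\gamma^{(2)}(0)|}\,\normInfInf{K_\gamma^{(2)}(0)\,I - D_2}
\;\le\;\frac{2H_2(0)}{|K_\gamma^{(2)}(0)|}\;\approx\;\frac{1.895}{4.567}\;\approx\;0.415,
\]
using the numbers from Lemma~\ref{lemma:K20} and the paper's bound $2H_2(0)\le 1.895\,\fc^2$. No choice of the $\beta$-rescaling touches this ratio, so $\normInfInf{E}\gtrsim 0.41$ and $\normInfInf{E}/(1-\normInfInf{E})\gtrsim 0.7$, two orders of magnitude above the target. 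A single Neumann expansion on the full $2|T|\times 2|T|$ system therefore cannot deliver~\eqref{bound_alpha}--\eqref{bound_Re_alpha1}.

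What rescues the argument is a \emph{two-stage} inversion via the Schur complement, which is what the paper does: first invert $D_2$ (here $\normInfInf{I - D_2/K_\gamma^{(2)}(0)}\approx 0.41<1$ is enough), then observe that the Schur complement $C=D_0-D_1D_2^{-1}D_1$ is extremely close to $I$ because the smallness of $\normInfInf{D_1}\le 0.111\,\fc$ enters \emph{squared}:
\[
\normInfInf{I-C}\le \normInfInf{I-D_0}+\normInfInf{D_1}^2\normInfInf{D_2^{-1}}
\le 1.855\cdot 10^{-2}+(0.111)^2\cdot 0.374\approx 2.3\cdot10^{-2}.
\]
Since $\alpha=C^{-1}v$ and $\beta=-D_2^{-1}D_1C^{-1}v$, this immediately gives $\normInf{\alpha}\le \normInfInf{C^{-1}}\le(1-2.315\cdot10^{-2})^{-1}$, $\abs{1-\alpha_1}\le\normInfInf{C^{-1}}\normInfInf{I-C}\le 2.370\cdot10^{-2}$, and $\normInf{\beta}\le\normInfInf{D_2^{-1}}\normInfInf{D_1}\normInfInf{C^{-1}}\le 4.247\cdot10^{-2}\,\lambda_c$. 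In short: your plan is sound once you replace the single full-block Neumann step by the Schur-complement (block-elimination) inversion; the rest of your outline then goes through verbatim.
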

The proof is completed by two lemmas, proved in Sections~\ref{proof:concavity} and~\ref{proof:boundq}, which show that our construction satisfies~\eqref{eq:q_far} and~\eqref{eq:q_quadratic} .
\begin{lemma}
\label{lemma:concavity}
Fix $t_0 \in T$. Under the hypotheses of Theorem~\ref{theorem:noiseless}, $|q(t)| < 1$ for $\abs{t - t_0} \in
(0, \taumiddle \, \lambda_c]$ and there exist numerical constants $C_0 \in \brac{0,1}$, $C_1$ and $C_2$ such that~\eqref{eq:q_quadratic} holds.
\end{lemma}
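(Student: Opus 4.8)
The plan is to prove that $\psi(t):=\abs{q(t)}^2=q_R(t)^2+q_I(t)^2$ is strictly concave on the symmetric interval $[-\taumiddle\,\lambda_c,\taumiddle\,\lambda_c]$; since $\psi(0)=1$ and $\psi'(0)=0$ this immediately gives $\abs{q(t)}<1$ for $0<\abs{t-t_0}\le\taumiddle\,\lambda_c$, and sharpening the bound on $\psi''$ near the origin yields~\eqref{eq:q_quadratic}. First I would normalize: replacing $q,v,\alpha,\beta$ by $\overline{v_0}$ times themselves and translating $T$ so that $t_0=0$ leaves $\abs{q}$ and the construction~\eqref{eq:dual_cert}--\eqref{eq:interp2} unchanged, and puts us in the case $t_0=0$, $v_0=1$ where the refined estimates $\operatorname{Re}\alpha_0\ge\alpha^{\text{L}}_R$, $\abs{\operatorname{Im}\alpha_0}\le\alpha^{\text{U}}_I$ of Lemma~\ref{lemma:coeffs} apply. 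I then split
\[
q(t)=q_{\text{near}}(t)+q_{\text{far}}(t),\qquad q_{\text{near}}(t):=\alpha_0 K_{\gamma}(t)+\beta_0 K_{\gamma}^{(1)}(t),
\]
\[
q_{\text{far}}(t):=\sum_{t_j\in T,\ t_j\neq 0}\bigl(\alpha_j K_{\gamma}(t-t_j)+\beta_j K_{\gamma}^{(1)}(t-t_j)\bigr).
\]
Since every Dirichlet factor $K(\gamma_j\fc,\cdot)$ is real and even, $K_{\gamma}$ is real and even and $K_{\gamma}^{(1)}$ is real and odd; in particular $K_{\gamma}(0)=1$, $K_{\gamma}''(0)<0$ with $\abs{K_{\gamma}''(0)}=\Theta(\fc^2)$, and $K_{\gamma}^{(\ell)}(0)=0$ for every odd $\ell$ (so $K_{\gamma}'(0)=K_{\gamma}^{(3)}(0)=0$).

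The next step is a uniform estimate of $q$ and its first three derivatives on $[-\taumiddle\,\lambda_c,\taumiddle\,\lambda_c]$. For the near part I combine $\abs{\alpha_0}\le\alpha^{\text{U}}$, $\abs{\beta_0}\le\beta^{\text{U}}$ with the non-asymptotic bounds on $K_{\gamma}^{(\ell)}$, $\ell=0,\dots,4$, from Section~\ref{sec:kernel} (valid for $\fc\ge\minfc$); for the far part I sum the kernel-tail bounds of Section~\ref{sec:kernel} over the off-support points, which are separated by at least $\Delta(T)\ge\optvalue\,\lambda_c$, so that $q_{\text{far}}^{(\ell)}$ and its real and imaginary parts are bounded by $\fc^{\ell}$ times an explicitly small constant. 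Writing $q_R=\operatorname{Re}\alpha_0\,K_{\gamma}+\operatorname{Re}\beta_0\,K_{\gamma}^{(1)}+\operatorname{Re}q_{\text{far}}$ and the analogue for $q_I$, and using that $\operatorname{Re}\alpha_0$ is close to $1$ while $\operatorname{Im}\alpha_0$, $\beta_0$ and $q_{\text{far}}$ are small, I obtain on the whole interval: $q_R(t)>0$ and close to $K_{\gamma}(t)$; $q_R'(t)$, $q_I(t)$, $q_I'(t)$, $q_I''(t)$ all small; and $q_R''(t)\le\operatorname{Re}\alpha_0\,K_{\gamma}''(t)+(\text{small})\,\fc^2<0$, the key point being that $K_{\gamma}''$ stays bounded away from $0$ throughout $[-\taumiddle\,\lambda_c,\taumiddle\,\lambda_c]$ --- this is where the particular value of $\taumiddle$ enters.

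Now $\psi''=2(q_R')^2+2(q_I')^2+2q_Rq_R''+2q_Iq_I''$, and the estimates above give $\psi''(t)<0$ for all $\abs{t}\le\taumiddle\,\lambda_c$: the term $2q_Rq_R''$ is bounded above by some $-\underline{c}\,\fc^2<0$, while $2(q_R')^2+2(q_I')^2+2\abs{q_I}\abs{q_I''}$ is too small to compensate. Since $\psi(0)=1$ and $\psi'(0)=2q_R(0)q_R'(0)+2q_I(0)q_I'(0)=0$ (using the interpolation conditions $q(0)=1$, $q^{(1)}(0)=0$), concavity gives $\psi(t)<1$, i.e. $\abs{q(t)}<1$, for $0<\abs{t}\le\taumiddle\,\lambda_c$. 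For~\eqref{eq:q_quadratic} I choose $C_0\in(0,1)$ small enough that $-2\bar{c}_1\fc^2\le\psi''(t)\le-2\bar{c}_2\fc^2$ on $[-C_0\lambda_c,C_0\lambda_c]$ for some $0<\bar{c}_2<\bar{c}_1$ (using $\psi''(0)=2\operatorname{Re}\alpha_0\,K_{\gamma}''(0)+2\operatorname{Re}q_{\text{far}}''(0)=-\Theta(\fc^2)$ and the Lipschitz control of $\psi''$ near $0$); integrating twice from $0$ with $\psi(0)=1$, $\psi'(0)=0$ gives $1-\bar{c}_1\fc^2t^2\le\psi(t)\le1-\bar{c}_2\fc^2t^2$, and taking square roots via $1-x\le\sqrt{1-x}\le1-x/2$ for $x\in[0,1]$ yields~\eqref{eq:q_quadratic} with $C_1=\bar{c}_1$ and $C_2=\bar{c}_2/2$.

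I expect the main obstacle to be the uniform negativity of $\psi''$ out to the comparatively large radius $\taumiddle\,\lambda_c$: the slack between $2q_Rq_R''$ and the competing $2(q_R')^2$ erodes as $\abs{t}$ grows, so the argument requires the sharp non-asymptotic bounds on $K_{\gamma}$, $K_{\gamma}^{(1)}$, $K_{\gamma}^{(2)}$ of Section~\ref{sec:kernel} and on the far sum, which is precisely what forces the threshold $\fc\ge\minfc$ and fixes the value of $\taumiddle$; the remaining steps are routine bookkeeping with the bounds of Lemma~\ref{lemma:coeffs} and Section~\ref{sec:kernel}.
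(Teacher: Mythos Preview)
Your overall strategy---reduce to $t_0=0$, $v_0=1$, study $\psi=\abs{q}^2$, split into near/far contributions, and use the coefficient bounds of Lemma~\ref{lemma:coeffs} together with the kernel bounds of Section~\ref{sec:kernel}---matches the paper. The quadratic bound~\eqref{eq:q_quadratic} near the origin is handled essentially as you describe.

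However, there is a genuine gap in your argument for $\abs{q(t)}<1$ on the full interval $(0,\taumiddle\,\lambda_c]$. You assert that $K_{\gamma}''$ stays bounded away from zero (negative) throughout $[-\taumiddle\,\lambda_c,\taumiddle\,\lambda_c]$, and hence that $\psi''<0$ there. This is false: the upper bound on $\psi''$ that the paper obtains from the same kernel estimates you invoke is negative only on $[0,\taumiddleSecondDer\,\lambda_c]$ with $\taumiddleSecondDer\approx 0.1105$, well short of $\taumiddle\approx 0.2883$. Beyond $\taumiddleSecondDer\,\lambda_c$ the sign of $K_{\gamma}''$ (and hence of $q_R''$) is not controlled, so concavity of $\psi$ is not available on the whole interval. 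Your identification of ``the particular value of $\taumiddle$'' with the range of strict concavity is therefore incorrect.

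The paper closes this gap by a different mechanism: instead of requiring $\psi''<0$ pointwise, it derives an explicit upper bound $F_{2,j}$ on $\psi''$ on each subinterval of a fine grid and then integrates twice via Lemma~\ref{lemma:bound_2der}, which tracks the accumulated contribution of both negative and positive pieces of $F_{2,j}$. The resulting bound $F_k^{\infty}$ on $\psi$ stays strictly below $1$ all the way to $\taumiddle\,\lambda_c$ even though $\psi$ is not concave there. Your proof sketch would go through verbatim only up to roughly $\taumiddleSecondDer\,\lambda_c$; to reach $\taumiddle\,\lambda_c$ you need this integration argument rather than pointwise concavity.
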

\begin{lemma}
\label{lemma:boundq}
Fix $t_0 \in T$. Let $t_{+}$ be the nearest element of $T$ such that $t_{+}>t_0$ and $t_{-}$ be the nearest element of $T$ such that $t_{-}<t_0$. Under the hypotheses of Theorem~\ref{theorem:noiseless}, $|q(t)| < 1$ for $t_0 - \frac{t_0-t_{-}}{2} \leq t < t_0$ and $ t_0 < t \leq t_0 + \frac{t_{+}-t_0}{2} $.
\end{lemma}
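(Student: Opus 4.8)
The plan is to bound $|q(t)|$ directly from the interpolation formula~\eqref{eq:dual_cert}, combining the triangle inequality with the coefficient estimates of Lemma~\ref{lemma:coeffs} and the sharp pointwise and decay bounds on $K_\gamma$ and $K_\gamma^{(1)}$ from Section~\ref{sec:kernel}. First I would shrink the range that actually needs new work. Since $\Delta(T)\ge\optvalue\lambda_c$ we have $(t_+-t_0)/2\ge\optvalue\lambda_c/2>\taumiddle\lambda_c$, and Lemma~\ref{lemma:concavity} already gives $|q(t)|<1$ for $0<|t-t_0|\le\taumiddle\lambda_c$; so it remains only to handle the intermediate region $\taumiddle\lambda_c\le t-t_0<(t_+-t_0)/2$ (the left half, $\taumiddle\lambda_c\le t_0-t<(t_0-t_-)/2$, being identical with $t_-$ in place of $t_+$). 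Write $\tau:=t-t_0$ in this region.

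On this region, by~\eqref{eq:dual_cert}, the triangle inequality, and Lemma~\ref{lemma:coeffs},
\[
|q(t)|\;\le\;\alpha^{\text{U}}\sum_{t_j\in T}\big|K_\gamma(t-t_j)\big|\;+\;\beta^{\text{U}}\sum_{t_j\in T}\big|K_\gamma^{(1)}(t-t_j)\big|,
\]
and I would bound each sum as a function of $\tau$ alone using the minimum separation. The point $t_0$ is at distance $\tau$ from $t$; the remaining support points to the right of $t$ are at distances $\ge\Delta(T)/2,\,3\Delta(T)/2,\,5\Delta(T)/2,\dots$ (because $(t_+-t_0)-\tau\ge(t_+-t_0)/2\ge\Delta(T)/2$ and consecutive points are $\ge\Delta(T)$ apart), while those to the left of $t$ other than $t_0$ are at distances $\ge\Delta(T)+\tau,\,2\Delta(T)+\tau,\dots$. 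Using a monotone decreasing envelope $\bar K$ for $|K_\gamma|$ on $[\taumiddle\lambda_c,\infty)$ (and $\bar K_1$ for $|K_\gamma^{(1)}|$), valid here since every argument is $\ge\min(\tau,\Delta(T)/2)\ge\taumiddle\lambda_c$, this gives
\[
\sum_{t_j\in T}\big|K_\gamma(t-t_j)\big|\;\le\;\bar K(\tau)\;+\;\sum_{m\ge1}\bar K\!\Big(\tfrac{(2m-1)\Delta(T)}{2}\Big)\;+\;\sum_{m\ge1}\bar K\big(m\,\Delta(T)+\tau\big),
\]
and similarly for the derivative sum with $\bar K_1$. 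These series converge — the kernel, a product of three Dirichlet kernels, satisfies $|K_\gamma(t)|\lesssim(\fc|t|)^{-3}$ and $|K_\gamma^{(1)}(t)|\lesssim\fc\,(\fc|t|)^{-3}$ — and after the first one or two terms their tails are dominated by a geometric series (or an integral), so they are bounded above explicitly.

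The last step is to check that the resulting explicit bound, a function $g(\tau;\Delta(T))$, stays strictly below $1$ for $\taumiddle\lambda_c\le\tau<(t_+-t_0)/2$; strictness of the conclusion is then immediate from the strict numerical inequality. For fixed $\tau$ the bound decreases in $\Delta(T)$, and when $\tau$ exceeds $\optvalue\lambda_c/2$ both $|t-t_0|$ and $|t-t_+|$ exceed $\optvalue\lambda_c/2$ (and all other support points are even more distant), so those subcases are dominated by the value at $\tau=\optvalue\lambda_c/2$. Hence it suffices to verify $g<1$ in the critical configuration $\Delta(T)=\optvalue\lambda_c$, with $\tau$ ranging over the compact interval $[\taumiddle\lambda_c,\optvalue\lambda_c/2]$. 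That verification is a finite computation: partition the interval into short pieces and, on each, bound the decreasing terms of $g$ at the left endpoint and the increasing tail terms at the right endpoint (a crude bound on $g'$ serves equally well).

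I expect the main obstacle to be exactly this last quantitative check, and specifically its behaviour near the left endpoint $\tau=\taumiddle\lambda_c$: there $\bar K(\tau)$ is still of order $0.8$, so $\alpha^{\text{U}}\bar K(\tau)$ by itself is already close to $1$, and the derivative contribution together with the nearest tail term ($m=1$, distance $\Delta(T)-\tau$) must be estimated very sharply for the total to come out below $1$. This is presumably why $\taumiddle$ and $\optvalue$ take their particular values — the bound here and the quadratic bound of Lemma~\ref{lemma:concavity} are arranged to meet with essentially no margin — and it is precisely where the non-asymptotic Dirichlet-kernel estimates of Section~\ref{sec:kernel} are indispensable, since the crude asymptotic decay $(\fc|t|)^{-3}$ is far too lossy in the range $\fc|t|\in[\taumiddle,2]$. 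The rest (convergence of the series, monotonicity in $\Delta(T)$, the subdivision argument) is routine bookkeeping.
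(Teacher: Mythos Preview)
Your proposal is correct and follows essentially the same route as the paper: apply the triangle inequality to~\eqref{eq:dual_cert}, plug in the coefficient bounds from Lemma~\ref{lemma:coeffs}, control the near term and the tail sums with the sharp kernel estimates of Section~\ref{sec:kernel}, and finish with a fine numerical grid check on the critical interval $[\taumiddle,\,\optvalue/2]$ (in units of $\lambda_c$). The only organizational difference is that where you reduce the range $\tau>\optvalue\lambda_c/2$ by a domination argument, the paper instead treats the extended region $[\Deltamin/2,\,(t_+-t_0)/2]$ as a separate explicit case, bounding the $t_0$ term by $\max\{\max_{u\in\mathcal G}B_{\gamma,\ell}^{\infty}(u,\epsilon),\,b_{\gamma,\ell}(2\taumin)\}$ over a grid on $[\taumin/2,2\taumin]$ and the two tail sums by $H_\ell(\taumin/2)+H_\ell(0)$, obtaining $|q|\le 0.896$ there.
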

The proofs of Lemmas~\ref{lemma:coeffs}, \ref{lemma:concavity} and \ref{lemma:boundq} make heavy use of bounds on $K_{\gamma}$ that are presented in Section~\ref{sec:kernel}. 

The proofs of the different lemmas required to establish our main result involve some rather intricate calculations. In order to ease the task of checking the proof, we provide a Matlab script that performs these calculations at \url{http://www.cims.nyu.edu/~cfgranda/scripts/superres_proof.zip}.

\subsection{Bounds on the interpolation kernel}
\label{sec:kernel}
Our results rely on sharp non-asymptotic bounds on the Dirichlet kernel $K$~\eqref{eq:Dirichlet_kernel} and its derivatives, which can in turn be used to establish bounds on the interpolation kernel $K_{\gamma}$~\eqref{eq:interpolation_kernel}  and its derivatives. 
To avoid excessive clutter we defer the precise expressions and their proofs to the appendix. The following lemma, proved in Section~\ref{proof:bounds_near}, provides upper and lower bounds near the origin.
\begin{lemma}
For $\fc \geq \minfc$ and $\ell \in {\keys{0,1,2,3}}$, for any $t$ in the unit interval if $\tau $ is such that $\abs{\fct-\tau }\leq \epsilon$ then
\label{lemma:bounds_near}
\begin{align*}
B_{\gamma,\ell}^{\text{L}} \brac{\tau}-\brac{2 \pi}^{\ell+1} \fc^{\ell} \epsilon \leq K^{\brac{\ell}}_{\gamma}\brac{t} \leq  B_{\gamma,\ell}^{\text{U}} \brac{\tau} +\brac{2 \pi}^{\ell+1} \fc^{\ell} \epsilon
\end{align*}
where $B_{\gamma,\ell}^{\text{L}} $ and $B_{\gamma,\ell}^{\text{U}}$ are defined in Section~\ref{sec:bounds_near} of the appendix.
\end{lemma}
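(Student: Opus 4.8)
The plan is to reduce the statement to sharp two-sided estimates for a \emph{single} Dirichlet kernel $K(f,\cdot)$ and its first three derivatives, and then to glue these together by Leibniz's rule while handling the $\epsilon$-slack by an elementary Bernstein-type bound.

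\textbf{Step 1 (Leibniz reduction).} Since $K_{\gamma}=\prod_{j=1}^{3}K(\gamma_j\fc,\cdot)$ by~\eqref{eq:interpolation_kernel}, differentiating $\ell\le 3$ times gives
\begin{equation*}
K_{\gamma}^{(\ell)}(t)=\sum_{\ell_1+\ell_2+\ell_3=\ell}\binom{\ell}{\ell_1,\ell_2,\ell_3}\,K^{(\ell_1)}(\gamma_1\fc,t)\,K^{(\ell_2)}(\gamma_2\fc,t)\,K^{(\ell_3)}(\gamma_3\fc,t),
\end{equation*}
a finite sum of at most $\binom{\ell+2}{2}\le 10$ terms, each a product of three factors of order $\le 3$. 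Hence it suffices to produce, for each bandwidth $f\in\{\gamma_1\fc,\gamma_2\fc,\gamma_3\fc\}$ and each $m\le 3$, explicit functions $b^{\text{L}}_{m},b^{\text{U}}_{m}$ with $b^{\text{L}}_m(\tau)\le \fc^{-m}K^{(m)}(f,\tau/\fc)\le b^{\text{U}}_m(\tau)$ for all $\fc\ge\minfc$ and all $\tau$ in the near-origin range. Multiplying the three interval estimates factor by factor — keeping careful track of signs, since the higher derivatives of $K$ change sign — and summing over the multi-indices produces the functions $B^{\text{L}}_{\gamma,\ell}$ and $B^{\text{U}}_{\gamma,\ell}$ whose precise form is recorded in the appendix.

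\textbf{Step 2 (single-kernel bounds).} I would work from the closed form $K(f,t)=\sin\!\big((2f+1)\pi t\big)\big/\big((2f+1)\sin\pi t\big)$ in~\eqref{eq:Dirichlet_kernel}. Setting $t=\tau/\fc$ and $f=\gamma_j\fc$, and using that $\tau$ stays bounded while $\fc\ge\minfc$, one Taylor-expands numerator and denominator in powers of $1/\fc$: the denominator $(2f+1)\sin(\pi\tau/\fc)$ equals $2\pi\gamma_j\tau$ up to an explicitly majorized $O(1/\fc)$ term, and the phase $(2f+1)\pi t$ equals $2\pi\gamma_j\tau$ up to $O(1/\fc)$, so $K(\gamma_j\fc,\tau/\fc)$ is the sinc-type function $\sin(2\pi\gamma_j\tau)/(2\pi\gamma_j\tau)$ plus an error bounded by an elementary expression in $\tau$ evaluated at $\fc=\minfc$. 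The same computation applied to $\partial_t^{m}\big[\sin((2f+1)\pi t)/((2f+1)\sin\pi t)\big]$, which are explicit rational–trigonometric expressions, yields the $b^{\text{L}}_m,b^{\text{U}}_m$. These manipulations are routine but lengthy, which is precisely why a verification script accompanies the proof.

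\textbf{Step 3 (discretization error).} Fix $\tau$ with $|\fct-\tau|\le\epsilon$, i.e. $|t-\tau/\fc|\le\epsilon/\fc$, and pass from $K_{\gamma}^{(\ell)}(\tau/\fc)$ to $K_{\gamma}^{(\ell)}(t)$. The Fourier coefficients $c_k$ of $K_{\gamma}$ are nonnegative (a convolution of nonnegative Dirichlet coefficients), supported on $|k|\le\fc$, with $\sum_k c_k=K_{\gamma}(0)=1$; therefore $\normInf{K_{\gamma}^{(\ell+1)}}\le\sum_{|k|\le\fc}c_k(2\pi|k|)^{\ell+1}\le(2\pi\fc)^{\ell+1}$, so $K_{\gamma}^{(\ell)}$ is Lipschitz with constant $(2\pi\fc)^{\ell+1}$ and $\big|K_{\gamma}^{(\ell)}(t)-K_{\gamma}^{(\ell)}(\tau/\fc)\big|\le(2\pi\fc)^{\ell+1}\cdot\epsilon/\fc=(2\pi)^{\ell+1}\fc^{\ell}\epsilon$. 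Combining this with Steps~1–2 gives the asserted inequalities.

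\textbf{Main obstacle.} The crux is Step~2: obtaining two-sided bounds on the Dirichlet kernel and its derivatives that are simultaneously genuinely non-asymptotic (explicit constants, valid for every $\fc\ge\minfc$) and tight enough to survive the later summation over the support in Lemmas~\ref{lemma:concavity} and~\ref{lemma:boundq}, where the resulting quantities must be pushed strictly below $1$. Propagating the $O(1/\fc)$ corrections and, above all, the sign changes of the derivatives through the triple product in Step~1 is the delicate bookkeeping; by contrast Steps~1 and~3 are essentially mechanical once the single-kernel estimates are in hand.
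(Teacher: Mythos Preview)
Your proposal is correct and follows essentially the same three-step architecture as the paper's proof: Leibniz reduction via the product formula~\eqref{eq:derivatives}, explicit two-sided bounds on the single Dirichlet kernel and its derivatives obtained from Taylor-type inequalities on $\sin$, $\cos$, $\tan$ (the paper records these as $B_\ell^{\text{U}}$, $B_\ell^{\text{L}}$ and combines them over $\{\text{L},\text{U}\}^p$ exactly as you describe), and a mean-value step for the $\epsilon$-slack. The one minor difference is that in Step~3 you bound $\|K_\gamma^{(\ell+1)}\|_\infty$ directly from the nonnegativity and unit sum of the Fourier coefficients, whereas the paper invokes Bernstein's polynomial inequality; both routes give the same $(2\pi\fc)^{\ell+1}$ Lipschitz constant, and yours is arguably more self-contained.
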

A direct corollary yields a sharp bound on the magnitude of the kernel and its derivatives.
\begin{corollary}
\label{cor:boundinf}
For $\fc \geq \minfc$, $\ell \in {\keys{0,1,2,3}}$ and any $t$ in the unit interval, if $\tau $ is such that $\abs{\fct-\tau }\leq \epsilon$ then
\begin{align*}
\abs{ K^{\brac{\ell}}_{\gamma}\brac{t}} \leq B_{\gamma,\ell}^{ \infty } \brac{ \tau , \epsilon } := \max \keys{\abs{B_{\gamma,\ell}^{\text{L}} \brac{\tau}},\abs{B_{\gamma,\ell}^{\text{U}} \brac{\tau}}  } +\brac{2 \pi }^{\ell+1} \fc^{\ell} \epsilon .
\end{align*}
\end{corollary}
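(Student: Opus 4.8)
The plan is to obtain the bound as an immediate consequence of the two-sided estimate in Lemma~\ref{lemma:bounds_near}; the only point requiring a word of justification is that we are allowed to pass from a two-sided bound to a bound on the modulus.

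First I would note that $K_\gamma = \prod_{j=1}^{3} K(\gamma_j \fc,\cdot)$ is a product of Dirichlet kernels, each of which is real-valued on $\R$ by~\eqref{eq:Dirichlet_kernel} (it is a normalized sum of cosines, equivalently the real ratio $\sin((2f+1)\pi t)/((2f+1)\sin(\pi t))$). Hence $K_\gamma$ and every derivative $K_\gamma^{(\ell)}$ take real values, so the inequalities in Lemma~\ref{lemma:bounds_near} compare real numbers. Now fix $\fc \geq \minfc$, $\ell \in \keys{0,1,2,3}$, a point $t$ in the unit interval, and $\tau$ with $\abs{\fct-\tau}\leq\epsilon$. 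Writing $a := B_{\gamma,\ell}^{\text{L}}(\tau) - (2\pi)^{\ell+1}\fc^{\ell}\epsilon$ and $b := B_{\gamma,\ell}^{\text{U}}(\tau) + (2\pi)^{\ell+1}\fc^{\ell}\epsilon$, Lemma~\ref{lemma:bounds_near} gives $a \leq K_\gamma^{(\ell)}(t) \leq b$, and for any real number sandwiched between $a$ and $b$ the modulus is at most $\max\keys{\abs{a},\abs{b}}$. Finally, applying the triangle inequality to $a$ and $b$ separately and then taking the maximum gives
\[
\max\keys{\abs{a},\abs{b}} \leq \max\keys{\abs{B_{\gamma,\ell}^{\text{L}}(\tau)},\abs{B_{\gamma,\ell}^{\text{U}}(\tau)}} + (2\pi)^{\ell+1}\fc^{\ell}\epsilon = B_{\gamma,\ell}^{\infty}(\tau,\epsilon),
\]
which is the claim.

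There is essentially no obstacle here: all the analytic work lives in Lemma~\ref{lemma:bounds_near}, and this corollary merely repackages it. The one bookkeeping subtlety, which the statement already reflects, is that the perturbation term $(2\pi)^{\ell+1}\fc^{\ell}\epsilon$ must be added only once at the end, rather than absorbed into each of $\abs{B_{\gamma,\ell}^{\text{L}}(\tau)}$ and $\abs{B_{\gamma,\ell}^{\text{U}}(\tau)}$ before the maximum is taken.
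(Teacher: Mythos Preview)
Your proof is correct and is exactly the unpacking the paper has in mind when it calls this a ``direct corollary'' of Lemma~\ref{lemma:bounds_near}: take the two-sided real bound, pass to the modulus via $\abs{x}\le\max\{\abs{a},\abs{b}\}$, and absorb the $(2\pi)^{\ell+1}\fc^{\ell}\epsilon$ term with the triangle inequality. The only addition you make is the explicit observation that $K_\gamma^{(\ell)}$ is real-valued, which is indeed what makes the two-sided inequality meaningful; the paper leaves this implicit.
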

Away from the origin, we establish bounds on the absolute value of $K_{\gamma}$ and its derivatives that are weaker, but decreasing. The following lemma is proved in Section~\ref{proof:tail_bounds} of the appendix.
\begin{lemma}
\label{lemma:tail_bounds}
For $\fc \geq \minfc$, $\ell \in {\keys{0,1,2,3}}$ and $\abs{ t } < \frac{450}{ \fc }$
\begin{align*}
\abs{ K^{\brac{\ell}}_{\gamma}\brac{t}} \leq  b_{\gamma,\ell} \brac{\fc t} 
\end{align*}
where $ b_{\gamma,\ell}$ are decreasing functions defined in Section~\ref{sec:tail_bounds} of the appendix.
\end{lemma}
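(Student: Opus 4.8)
The idea is to reduce the bound to the single Dirichlet kernel and then reconstitute $K_{\gamma}$ through the product rule, using the factorization $K_{\gamma}\brac{t} = \prod_{j=1}^{3} K\brac{\gamma_j \fc, t}$ from~\eqref{eq:interpolation_kernel}. First I would record the geometry of the range: since $\abs{t} < 450/\fc \leq 0.45 < 1/2$ once $\fc \geq \minfc$, there is no wrap-around on $\mathbb{T}$, so $\sin\brac{\pi t}$ vanishes on this interval only at $t=0$, and by concavity of $\sin$ on $[0,1/2]$ we have $\abs{\sin\brac{\pi t}} \geq 2\abs{t}$. Because $K$ and $K_{\gamma}$ are even (and each bound below depends on $t$ only through $\abs{t}$), it suffices to argue for $t > 0$.

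\textbf{Single-kernel tail bounds.} Writing $K\brac{f,t} = \sin\brac{m\pi t}/\brac{m\sin\pi t}$ with $m = 2f+1$ and applying the Leibniz rule to the product $\sin\brac{m\pi t}\cdot\tfrac1m\cdot\tfrac{1}{\sin\pi t}$, one gets, for $\ell \in \keys{0,1,2,3}$, an expansion $K^{\brac{\ell}}\brac{f,t} = \tfrac{\pi^{\ell}}{m}\sum_{r=0}^{\ell}\binom{\ell}{r} m^{\ell-r}\,\sigma_{\ell-r}\brac{m\pi t}\,R_r\brac{\cos\pi t}\,\sin^{-\brac{r+1}}\brac{\pi t}$, where each $\sigma_k \in \keys{\pm\sin,\pm\cos}$ and each $R_r$ is a fixed polynomial produced by differentiating $1/\sin$; both $\abs{\sigma_k}$ and $\abs{R_r\brac{\cos\pi t}}$ are bounded by explicit constants. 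Hence $\abs{K^{\brac{\ell}}\brac{f,t}} \leq \tfrac{\pi^{\ell}}{m}\sum_{r=0}^{\ell}\binom{\ell}{r} c_r m^{\ell-r}\,\sin^{-\brac{r+1}}\brac{\pi t}$. Substituting $f=\gamma_j\fc$, $m = m_j = 2\gamma_j\fc+1$ (so $m_j \geq 2\gammaOne\,\minfc+1$ is large, which I would use to discard or bound the lower-order-in-$m$ contributions crudely) and, where a pure function of $\fct$ is wanted, $\sin\brac{\pi t}\geq 2\abs{t}$, turns this into a fully explicit bound. The key structural point is that on $(0,1/2]$ each factor $\sin^{-\brac{r+1}}\brac{\pi t}$ is \emph{decreasing}, so every such single-kernel bound is a nonnegative decreasing function of $\abs{t}$ (these bounds blow up near $t=0$, which is exactly why the near-origin bounds of Lemma~\ref{lemma:bounds_near} and Corollary~\ref{cor:boundinf} are needed there instead).

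\textbf{Assembling $K_{\gamma}$.} By the multinomial product rule, $K_{\gamma}^{\brac{\ell}}\brac{t} = \sum_{\ell_1+\ell_2+\ell_3=\ell}\binom{\ell}{\ell_1,\ell_2,\ell_3}\prod_{j=1}^{3}K^{\brac{\ell_j}}\brac{\gamma_j\fc,t}$, so $\abs{K_{\gamma}^{\brac{\ell}}\brac{t}}$ is at most the corresponding finite sum of products of the single-kernel bounds above. A product of nonnegative decreasing functions is decreasing, and a sum of such is decreasing, so collecting these terms and writing them as functions of $\fct$ (with $\fc\geq\minfc$ entering only through the harmless $m_j = 2\gamma_j\fc+1$) yields the functions $b_{\gamma,\ell}$ with the claimed monotonicity; their explicit forms are what I would record in Section~\ref{sec:tail_bounds}. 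The bound then holds uniformly for $\abs{t} < 450/\fc$, which comfortably covers the $\jzero$-samples reach used downstream.

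\textbf{Main obstacle.} The difficulty is bookkeeping and sharpness rather than anything conceptual: I must carry the quotient-rule expansion through $\ell \leq 3$ with the constants $c_r$ (equivalently the polynomials $R_r$) kept explicit, and — more delicately — the resulting $b_{\gamma,\ell}$ must be tight enough near the inner edge of the tail region to dovetail with Lemma~\ref{lemma:bounds_near}/Corollary~\ref{cor:boundinf}, since Lemmas~\ref{lemma:coeffs}, \ref{lemma:concavity} and~\ref{lemma:boundq} splice the near and far regimes together. I would therefore retain rather than discard the lower-order-in-$m_j$ terms and verify the final envelopes with the accompanying Matlab script; checking that each $b_{\gamma,\ell}$ is genuinely decreasing on $(0,450)$ is then immediate, as every constituent factor $\sin^{-k}\brac{\pi t}$ is monotone on $(0,1/2]$.
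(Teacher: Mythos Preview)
Your plan is correct and shares the paper's high-level architecture --- bound each single-Dirichlet factor $K^{(\ell)}(\gamma_j f_c,\cdot)$ and then assemble $K_\gamma^{(\ell)}$ through the product-rule identities~\eqref{eq:derivatives} --- but the single-kernel step differs in two respects. First, the paper does not run the Leibniz expansion on $\sin(m\pi t)\cdot(m\sin\pi t)^{-1}$; it instead records the recursive identities~\eqref{eq:ineqK1}, \eqref{eq:ineqK2}, \eqref{eq:ineqK3}, which express $K^{(\ell)}$ as a combination of $K,\dots,K^{(\ell-1)}$ times powers of $1/\tan(\pi t)$, so each $b_\ell$ is built directly from the already-established $b_0,\dots,b_{\ell-1}$. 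Second --- and this is where sharpness is decided --- the paper bounds $1/\sin(\pi t)$ by the cubic Taylor envelope $\bigl(\pi t(1-\pi^2 t^2/6)\bigr)^{-1}$ of~\eqref{eq:ineq_1oversin}, not by the chord bound $1/(2|t|)$. The chord bound is looser by $\pi/2$ per factor, so your $b_{\gamma,0}$ would exceed the paper's by about $(\pi/2)^3\approx 3.9$; the ``lower-order-in-$m_j$'' terms you flag as the main obstacle are negligible by comparison. (The paper does use the chord bound, but only in the far-tail Lemma~\ref{lemma:tail_bounds_inf}.) Note also that $|t|<450/f_c$ is not merely the condition $|t|<1/2$: with the Taylor correction, the single-kernel envelope $b_0(\gamma_0,\tau)=\bigl(2\pi\gamma_0\tau(1-\pi^2\tau^2/(6f_{\min}^2))\bigr)^{-1}$ ceases to be decreasing exactly at $\tau=\sqrt{2}\,f_{\min}/\pi\approx 450.16$, which is the origin of the $450$ in the statement. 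Your chord-based envelopes are decreasing on all of $(0,1/2]$ and would not need this cutoff, but you would pay for the extra slack in the numerical margins of Lemmas~\ref{lemma:concavity} and~\ref{lemma:boundq}.
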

Corollary~\ref{cor:boundinf} and Lemma~\ref{lemma:tail_bounds} are combined in the next lemma, proved in Section~\ref{proof:kernelsum}, to bound the sum of absolute values of shifted copies of the kernel (and of its derivatives) if they are separated by the minimum distance. The lemma is stated for an interval around the origin, but it holds for any position in the unit interval.
\begin{lemma}
\label{lemma:kernelsum}
Assume $0 \in T$ and let $\tauminth : = \Deltaminth / \lambda_c=\optvalue$ denote the minimum separation of the set $T$, scaled so that $\Delta\brac{T} \geq \tauminth /\fc$. If $\fc \geq \minfc$ and $\gamma=\sqbr{\gammaOne,\gammaTwo,\gammaThree}^T$, for any $0 \leq t \leq \Deltaminth/2$, $
\epsilon>0$ and $\tau$ such that $\tau-\epsilon \leq \fct \leq \tau$  
\begin{align*}
\sum_{t_j \in \,  T\setminus \keys{0} } \abs{K^{\brac{\ell}}_{\gamma} \brac{t-t_j}} & \leq   H_{\ell}\brac{\tau } + H_{\ell}\brac{-\tau}  ,
\end{align*}
where  
\begin{align*}
H_{\ell}\brac{ \tau } & := \sum_{j=1}^{\jnear}  \max \keys{\max_{ u \in \mathcal{G}_{j,\tau} } B_{\gamma,\ell}^{ \infty } \brac{u, \epsilon}, b_{\gamma,\ell} \brac{ \brac{j+4} \tauminth }} + \widetilde{C}_{\ell} ,
\end{align*}
where $\mathcal{G}_{j,\tau}$ is an $\epsilon$-grid of equispaced points covering the interval $\sqbr{j\tauminth-\tau,\brac{j+4}\tauminth}$ and  
\begin{align*}
\widetilde{C}_{\ell} := \sum_{j=\jnearpone}^{\jzero}  b_{\gamma,\ell} \brac{ \brac{j-1/2} \tauminth } + C_{\ell}, 
\end{align*}
$C_{0} = 8.852 \, 10^{-7}$, $C_{1} = 5.562 \, 10^{-6}$, $C_{2} = 3.495 \, 10^{-5}$, $C_{3} = 2.196 \, 10^{-4}$.
\end{lemma}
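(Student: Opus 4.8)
The plan is to fix a point $t$ in $[0,\Deltaminth/2]$ and split the sum over $t_j \in T\setminus\keys{0}$ according to whether $t_j$ sits to the right or to the left of the origin on the circle, which produces the two contributions $H_\ell\brac{\tau}$ and $H_\ell\brac{-\tau}$; by symmetry it suffices to bound one of them. Index the relevant elements of $T$ by how far they lie (in scaled units $\fc\cdot{}$distance) from the origin, say $t_1, t_2, \dots$; since the minimum separation is $\tauminth/\fc$ one has $\fc t_j \geq j\tauminth$, and combined with $\fc t \leq \tau$ this yields the single inequality that drives the whole proof, $\fc\brac{t_j - t} \geq j\tauminth - \tau$. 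The task is then to control $\abs{K_\gamma^{\brac{\ell}}\brac{t - t_j}}$ uniformly over all admissible scaled arguments $\fc\brac{t_j - t} \geq j\tauminth - \tau$.

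For the \emph{near} block $1 \leq j \leq \jnear$ I would use a case split at $\brac{j+4}\tauminth$. If $\fc\brac{t_j - t} \leq \brac{j+4}\tauminth$, the scaled argument lies in $[\,j\tauminth - \tau,\,\brac{j+4}\tauminth]$, exactly the interval that the $\epsilon$-grid $\mathcal{G}_{j,\tau}$ covers, so Corollary~\ref{cor:boundinf} bounds the term by $\max_{u\in\mathcal{G}_{j,\tau}}B_{\gamma,\ell}^{\infty}\brac{u,\epsilon}$ — the $\brac{2\pi}^{\ell+1}\fc^{\ell}\epsilon$ margin built into $B_{\gamma,\ell}^{\infty}$ absorbing the gap between the true argument and the nearest grid point. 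If instead $\fc\brac{t_j - t} > \brac{j+4}\tauminth$, Lemma~\ref{lemma:tail_bounds} together with the monotonicity of $b_{\gamma,\ell}$ bounds the term by $b_{\gamma,\ell}\brac{\brac{j+4}\tauminth}$. Taking the maximum of the two handles both possibilities and accounts for the shape of $H_\ell$ on this block. For the \emph{intermediate} block $\jnearpone \leq j \leq \jzero$ the scaled argument already obeys $\fc\brac{t_j-t}\geq j\tauminth - \tau \geq \brac{j-1/2}\tauminth$, which is far enough from the origin that the crude-but-decreasing bound suffices: Lemma~\ref{lemma:tail_bounds} and monotonicity of $b_{\gamma,\ell}$ give the term-by-term estimate $b_{\gamma,\ell}\brac{\brac{j-1/2}\tauminth}$, which is exactly the finite sum appearing in $\widetilde{C}_\ell$.

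Finally, the \emph{far tail} — the points with index beyond $\jzero$ (non-empty once $\fc$ exceeds $\minfc$ by enough), and in particular those lying nearly antipodal to $t$ on the circle — is handled by the elementary factorized bound $\abs{K\brac{f,t}}\leq \brac{\brac{2f+1}\,2\norm{t}}^{-1}$ together with the analogous estimates for the derivatives, where $\norm{t}$ denotes the distance to the nearest integer. Because $K_\gamma$ is a product of three such Dirichlet kernels, each far term has size $O\brac{\fc^{-2}}$, and since there are at most $O\brac{\fc}$ points in total, summing over all of them using only that pairwise circular distances lie between $\tauminth/\fc$ and $1/2$ gives a contribution that is uniform in $\fc$ and collapses to the numerical constants $C_0,\dots,C_3$.

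The main obstacle is not a single hard idea but getting the book-keeping airtight: one must verify that the case split at $\brac{j+4}\tauminth$ leaves no admissible argument uncovered (the grid $\mathcal{G}_{j,\tau}$ has $\tau$-dependent endpoints, so this interacts delicately with the $\epsilon$-slack in $B_{\gamma,\ell}^{\infty}$), that the $1$-periodicity of $K_\gamma$ is used correctly so that the far-tail estimate genuinely applies to the wrap-around points rather than to their large linear index, and — the most tedious part — that the stated values of $C_\ell$ really do dominate the resulting tail series under the standing assumption $\fc\geq\minfc$. All of these are numerical and are discharged by the accompanying Matlab script; the conceptual content is the near/intermediate/far decomposition together with the minimum-separation inequality $\fc\brac{t_j-t}\geq j\tauminth-\tau$.
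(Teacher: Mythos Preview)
Your decomposition into left/right halves and your handling of the near block ($1\le j\le 20$, via the case split at $(j+4)\tauminth$ using Corollary~\ref{cor:boundinf} on the grid $\mathcal{G}_{j,\tau}$ versus the monotone bound $b_{\gamma,\ell}$ from Lemma~\ref{lemma:tail_bounds}$\,$) and the intermediate block ($21\le j\le 400$, via $b_{\gamma,\ell}\brac{(j-1/2)\tauminth}$) are exactly what the paper does.

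The far-tail step, however, contains a scaling error. It is \emph{not} true that each far term has size $O\brac{\fc^{-2}}$. For $j$ just above $400$ the distance satisfies $t_j-t \approx j\tauminth/\fc$, so your factorized Dirichlet bound gives
\[
\abs{K_\gamma^{(\ell)}(t-t_j)} \;\lesssim\; \frac{\fc^{\ell}}{\brac{\fc\cdot j\tauminth/\fc}^{3}} \;=\; \frac{\fc^{\ell}}{\brac{j\tauminth}^{3}},
\]
which for $\ell=0$ is $O(1)$ in $\fc$, not $O(\fc^{-2})$. Your heuristic ``$O(\fc^{-2})$ per term times $O(\fc)$ terms'' therefore does not hold and would not produce the stated constants $C_\ell$. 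The paper's argument (isolated as a separate Lemma~\ref{lemma:tail_bounds_inf}) is that the bound in the \emph{scaled} variable $\fc(t_j-t)$ is $O\brac{(j\tauminth)^{-3}}$, independent of $\fc$, and then the tail sum
\[
\sum_{j>400}\frac{1}{(j\tauminth)^{3}} \;=\; \frac{1}{\tauminth^{3}}\Bigl(\zeta(3)-\sum_{k=1}^{400}\frac{1}{k^{3}}\Bigr)
\]
is what collapses to the numerical $C_\ell$. Once you track $\fc(t_j-t)$ rather than $t_j-t$ the fix is immediate, but as written this step would fail. (Relatedly, the worry about wrap-around is moot: working on $[-1/2,1/2]$ as the paper does, all right-side points satisfy $0<t_j-t\le 1/2$, so the factorized bound applies directly with no periodicity bookkeeping.)
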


\subsection{Proof of Lemma~\ref{lemma:coeffs}}
\label{proof:coeffs}
Set
\[
\brac{D_0}_{jk} =K_{\gamma}\brac{t_j - t_k}, \quad \brac{D_1}_{jk} =
K_{\gamma}^{\brac{1}}\brac{t_j - t_k}, \quad \brac{D_2}_{jk} = K_{\gamma}^{\brac{2}}\brac{t_j - t_k},
\]
where $j$ and $k$ range from $1$ to $\abs{T}$. With this,
\eqref{eq:interp1} and \eqref{eq:interp2} become
\[
\begin{bmatrix} D_0 & D_1\\ D_1 & D_2 \end{bmatrix} \begin{bmatrix} \alpha \\
  \beta \end{bmatrix} =\begin{bmatrix} v\\0 \end{bmatrix}.
\]
By standard linear algebra result, this system is
invertible if and only if $D_2$ and its Schur complement $D_0 -
D_1D_2^{-1}D_1$ are both invertible. To prove that this is the case we
can use the fact that a symmetric matrix $M$ is invertible if
\begin{equation}
 \normInfInf{\Id- M} < 1, \label{inf_invertibility}  
\end{equation}
where $\|A \|_\infty$ is the usual infinity norm of a matrix defined as
$\|A\|_\infty = \max_{\|x\|_\infty = 1} \|Ax\|_\infty = \max_i \sum_j
|a_{ij}|$. This follows from $M^{-1} = (I - H)^{-1} = \sum_{k \ge 0}
H^k$, $H = I - M$, where the series is convergent since $
\normInfInf{H} < 1$. In particular,
\begin{equation}
  \normInfInf{M^{-1}}  \leq \frac{1}{1-\normInfInf{\Id-M}}.
\label{infinfnorminv}
\end{equation}
By Lemma~\ref{lemma:kernelsum}, 
\begin{align}
\normInfInf{\Id- D_0} & \le \max_{t_0 \in T}\sum_{t_j \in T\setminus \{t_0\}} \abs{ K_{\gamma} \brac{t_0-t_j}} \leq
  2H_{0}\brac{0} \le 1.855 \, 10^{-2},\label{Id_F_bound}\\
\normInfInf{D_1}   & \le  \max_{t_0 \in T} \sum_{t_j \in T\setminus \{t_0\}} \abs{K_{\gamma}^{\brac{1}}\brac{t_0-t_j}} \leq
 2H_{1}\brac{0}  \leq  0.1110 \; \fc, \label{D1_bound}\\
\normInfInf{ K_{\gamma}^{\brac{2}}\brac{0}\Id-D_2} & \leq \max_{t_0 \in T} \sum_{t_j \in T\setminus \{t_0\}} \abs{K_{\gamma}^{\brac{2}}\brac{t_0-t_j}} \leq 2H_{2}\brac{0} \leq   1.895 \; \fc^2
\label{Id_D2_bound}\text{.}
\end{align}

Note that $D_2$ is symmetric because the second derivative of the
interpolation kernel $K_{\gamma}^{\brac{2}}$ is symmetric. The bound \eqref{Id_D2_bound} implies
\[
\normInfInf{\Id-\frac{D_2}{ K_{\gamma}^{\brac{2}}\brac{0}}}  < 1, 
\]
which implies the invertibility of $D_2$.  
To control the infinity norm of the inverse of $D_2$ we need bounds on the value of $K_{\gamma}^{\brac{2}}$ at the origin. 
\begin{lemma}
\label{lemma:K20}
If $\fc \geq \fmineq$
\begin{align*}
K_{\gamma}^{\brac{2}} \brac{0} \leq B_{\gamma,2}^{0,\text{\em U}} < 0 , 
\quad \text{ where} \quad
B_{\gamma,2}^{0,\text{\em U}} = \sum_{j=1}^{p} B_{2}^{0,\text{\em U}} \brac{\gamma_j}
\quad \text{ and } \quad 
B_{2}^{0,\text{\em U}} \brac{\gamma_j} = - \frac{4 \pi ^2 \gamma_j^2  \fc^2}{3}.
\end{align*}
\end{lemma}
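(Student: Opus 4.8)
The plan is to evaluate $K_{\gamma}^{\brac{2}}\brac{0}$ exactly, using the product structure of the interpolation kernel $K_{\gamma}\brac{t}=\prod_{j=1}^{p}K\brac{\gamma_j\fc,t}$ together with the evenness of the Dirichlet kernel. Applying the general Leibniz rule to the second derivative of a product of $p$ factors, $K_{\gamma}^{\brac{2}}\brac{0}$ decomposes into terms in which a single factor is differentiated twice and the remaining factors are evaluated at $0$, plus terms in which two distinct factors are each differentiated once and the rest evaluated at $0$. Every Dirichlet kernel obeys $K\brac{f,0}=1$, and because $K\brac{f,\cdot}$ is even in $t$ it also obeys $K^{\brac{1}}\brac{f,0}=0$; consequently every mixed first-derivative term vanishes and every surviving term reduces to $K^{\brac{2}}\brac{\gamma_j\fc,0}$, giving $K_{\gamma}^{\brac{2}}\brac{0}=\sum_{j=1}^{p}K^{\brac{2}}\brac{\gamma_j\fc,0}$.

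Next I would record a closed form for $K^{\brac{2}}\brac{f,0}$. Differentiating the Fourier representation $K\brac{f,t}=\frac{1}{2f+1}\sum_{k=-f}^{f}e^{i2\pi kt}$ twice and setting $t=0$ yields $K^{\brac{2}}\brac{f,0}=-\frac{4\pi^{2}}{2f+1}\sum_{k=-f}^{f}k^{2}=-\frac{4\pi^{2}f\brac{f+1}}{3}$, using $\sum_{k=1}^{f}k^{2}=\frac{f\brac{f+1}\brac{2f+1}}{6}$; the same value comes out of a second-order Taylor expansion of the closed form $\sin\brac{\brac{2f+1}\pi t}/\brac{\brac{2f+1}\sin\brac{\pi t}}$ around $t=0$, which also handles the case where $\gamma_j\fc$ is not an integer. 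Since $f>0$ we have $-\frac{4\pi^{2}f\brac{f+1}}{3}\le-\frac{4\pi^{2}f^{2}}{3}$, and taking $f=\gamma_j\fc$ gives $K^{\brac{2}}\brac{\gamma_j\fc,0}\le-\frac{4\pi^{2}\gamma_j^{2}\fc^{2}}{3}=B_{2}^{0,\text{U}}\brac{\gamma_j}$.

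Summing over $j=1,\dots,p$ then produces $K_{\gamma}^{\brac{2}}\brac{0}\le\sum_{j=1}^{p}B_{2}^{0,\text{U}}\brac{\gamma_j}=B_{\gamma,2}^{0,\text{U}}$, and since each summand $-\frac{4\pi^{2}\gamma_j^{2}\fc^{2}}{3}$ is strictly negative, $B_{\gamma,2}^{0,\text{U}}<0$, which is the assertion of the lemma. The hypothesis $\fc\geq\fmineq$ plays no essential role in this particular argument (it only forces the bandwidths $\gamma_j\fc$ to be large) and is carried along merely for uniformity with the companion lemmas. The only step calling for any attention — and hence the main obstacle in an otherwise elementary computation — is the cancellation of the mixed first-derivative terms in the Leibniz expansion, which follows at once from the evenness of $K\brac{f,\cdot}$.
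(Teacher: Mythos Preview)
Your proof is correct and follows essentially the same approach as the paper: the paper's one-line proof simply cites $K(f,0)=1$, $K^{(1)}(f,0)=0$, $K^{(2)}(f,0)=-4\pi^{2}f(1+f)/3$ and the Leibniz identity \eqref{eq:derivatives}, which is exactly the computation you spell out in detail. Your remark that the hypothesis $\fc\geq\fmineq$ is inessential here, and your handling of the non-integer bandwidth case via the closed form, are both correct refinements that the paper leaves implicit.
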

\begin{proof}
The result is a consequence of $K \brac{f,0}=1$, $K^{\brac{1}}\brac{f,0}=0$, $K^{\brac{2}}\brac{f,0}=- 4 \pi ^2 f \brac{1+f}/3$ and~\eqref{eq:derivatives}.
\end{proof}
By the lemma and~\eqref{infinfnorminv}, we have
\begin{align}
\normInfInf{D_2^{-1}} & \leq \frac{1}{\abs{K_{\gamma}^{\brac{2}}\brac{0}} - \normInfInf{ K_{\gamma}^{\brac{2}}\brac{0}\Id-D_2}} \notag \\
& \leq \frac{1}{\abs{B_{\gamma,2}^{0,\text{U}}} - \normInfInf{ K_{\gamma}^{\brac{2}}\brac{0}\Id-D_2}} \leq  0.3738 \; \lambda_c^2. \label{D2_inv_bound}
\end{align}
Combining this with \eqref{Id_F_bound} and \eqref{D1_bound} yields 
\begin{align}
\normInfInf{\Id- \brac{D_0-D_1D_2^{-1}D_1}} & \leq \normInfInf{\Id- D_0} + \normInfInf{D_1}^2\normInfInf{D_2^{-1}}\notag\\
& \leq 2.315 \; 10^{-2} <1 \label{invschur_bound}\text{.}
\end{align}
Note that the Schur complement of $D_2$ is symmetric because $D_0$ and
$D_2$ are both symmetric whereas $D_1^T=-D_1$ since the derivative of
the interpolation kernel is odd. This shows that the Schur complement
of $D_2$ is invertible and, therefore, the coefficient vectors
$\alpha$ and $\beta$ are well defined.

There just remains to bound the interpolation coefficients, which can
be expressed as
\[
  \begin{bmatrix} \alpha \\
    \beta \end{bmatrix} =\begin{bmatrix} \Id \\
    -D_2^{-1}D_1 \end{bmatrix} C^{-1} v, \quad C :=
  D_0-D_1D_2^{-1}D_1,
\]
where $C$ is the Schur complement.  The relationships
\eqref{infinfnorminv} and \eqref{invschur_bound} immediately give a
bound on the magnitude of the entries of $\alpha$
\[
\normInf{\alpha} = \normInf{C^{-1} v} \leq \normInfInf{C^{-1}} \leq 1+
2.370 \; 10^{-2}.
\]
Similarly, \eqref{D1_bound}, \eqref{D2_inv_bound} and
\eqref{invschur_bound} allow to bound the entries of $\beta$:
\begin{align*}
  \normInf{\beta} & \leq \normInfInf{D_2^{-1}D_1 C^{-1}}\\
  & \leq \normInfInf{D_2^{-1}} \normInfInf{D_1}
  \normInfInf{C^{-1}} \leq 4.247 \; 10^{-2} \,
  \lambda_c.
\end{align*}
Finally, with $v_1=1$, we can use \eqref{invschur_bound} to show that
$\alpha_1$ is almost equal to 1. Indeed,  
\begin{align*}
\abs{1 - \alpha_1} & = \abs{1-\brac{C^{-1}v}_1} \\
& = \abs{ \brac{ \Id - C^{-1} v }_1}\\
& \le \normInfInf{\Id - C^{-1}} \\
& = \normInfInf{C^{-1} (\Id - C)}\\
& \leq \normInfInf{C^{-1}} \, \normInfInf{\Id - C} \leq 2.370 \; 10^{-2}.
\end{align*}
This concludes the proof. 

\subsection{Proof of Lemma \ref{lemma:concavity}}
\label{proof:concavity}
Without loss of generality we map the unit circle $\mathbb{T}$ to the interval $\sqbr{-\frac{1}{2},\frac{1}{2}}$ and we set $t_0 = 0$ and $v_0 =
1$. 
In order to bound the magnitude of the dual polynomial locally, we control the second derivative of its square around the origin. In particular, we bound it on an interval of the form $ \fc \, t \in \,  [j \epsilon, \brac{j+1} \epsilon]$ for an integer $j \geq 0$ and $\epsilon>0$. Let us decompose $q=q_R+iq_I$ into its real and imaginary parts. The derivative is equal to
\begin{align*}
\derTwo{t}{\abs{q}^2}\brac{t} &= 2 \brac{ q_R\brac{t}q_R^{\brac{2}}\brac{t}+ q_I\brac{t} q_I^{\brac{2}}\brac{t} + \brac{q_R^{\brac{1}}\brac{t}}^2+\brac{q_I^{\brac{1}}\brac{t}}^2} \\
& = 2 \brac{ q_R\brac{t}q_R^{\brac{2}}\brac{t}+ q_I\brac{t} q_I^{\brac{2}}\brac{t} +\abs{\tilde{q}_1\brac{t}}^2},
\end{align*}
where $\tilde{q}_1\brac{t}:=q_R^{\brac{1}}+iq_I^{\brac{1}}$. To alleviate notation we define
\begin{align}
G_{\ell,j,\epsilon}\brac{a_1,a_2} := a_1B_{\gamma,\ell}^{ \infty } \brac{ j \epsilon , \epsilon }
+ a_2\brac{ H_{\ell}\brac{-\brac{j+1} \epsilon } +   H_{\ell}\brac{\brac{j +1}\epsilon}}, \label{eq:Gdef}
\end{align}
where $B_{\gamma,\ell}^{ \infty }$ is defined in Corollary~\ref{cor:boundinf}. By Lemmas~\ref{lemma:coeffs}, \ref{lemma:bounds_near} and~\ref{lemma:kernelsum} and Corollary~\ref{cor:boundinf} we have
\begin{align*}
  q_R\brac{t} & = \sum_{t_j \in T} \Real{\alpha_j} K_{\gamma}\brac{t-t_j} + \Real{\beta_j} K_{\gamma}^{\brac{1}}\brac{t-t_j}\notag\\
  & \geq \Real{\alpha_0}K_{\gamma}\brac{t} -\normInf{\alpha}\sum_{t_j \in T\setminus\keys{0}}\abs{K_{\gamma}\brac{t-t_j}} - \normInf{\beta}\sum_{t_j \in T} \abs{K_{\gamma}^{\brac{1}}\brac{t-t_j}}\notag\\
  & \geq \alpha^{\text{L}}_R\brac{ B_{\gamma,0}^{\text{L}} \brac{j \epsilon}- 2\pi \epsilon}
  -\alpha^{\text{U}} \brac{ H_{0}\brac{-\brac{j+1} \epsilon }  +   H_{0}\brac{\brac{j +1}\epsilon}} 
  -  G_{1,j,\epsilon}\brac{\beta^{\text{U}},\beta^{\text{U}}}
  \notag\\
&:=J_{0,R}^{\text{L}}\brac{j},
\end{align*}
for $j \epsilon \leq \fc \, t \leq \brac{j+1} \epsilon$ since $B_{\gamma,0}^{\text{L}}$ is positive in the interval of interest. Similarly,
\begin{align*}
\abs{ q_R\brac{t}} & \leq   G_{0,j,\epsilon}\brac{\alpha^{\text{U}},\alpha^{\text{U}}}
+  G_{1,j,\epsilon}\brac{\beta^{\text{U}},\beta^{\text{U}}}
:=J_{0,R}\brac{j} \text{,}\\
\abs{ q_I\brac{t}} & \leq G_{0,j,\epsilon}\brac{ \alpha^{\text{U}}_I ,\alpha^{\text{U}}}
+ G_{1,j,\epsilon}\brac{\beta^{\text{U}},\beta^{\text{U}}}
:=J_{0,I}\brac{j}, \\
\abs{ \tilde{q}_1\brac{t} } & \leq G_{1,j,\epsilon}\brac{\alpha^{\text{U}},\alpha^{\text{U}}}
+ G_{2,j,\epsilon}\brac{\beta^{\text{U}},\beta^{\text{U}}}
:=J_{1}\brac{j}, \\
\abs{q_R^{\brac{2}}\brac{t}} & \leq  G_{2,j,\epsilon}\brac{\alpha^{\text{U}},\alpha^{\text{U}}}
+G_{3,j,\epsilon}\brac{\beta^{\text{U}},\beta^{\text{U}}}
:= J_{2,R}\brac{j}, \\
\abs{q_I^{\brac{2}}\brac{t}} & \leq G_{2,j,\epsilon}\brac{ \alpha^{\text{U}}_I ,\alpha^{\text{U}}}
+ G_{3,j,\epsilon}\brac{\beta^{\text{U}},\beta^{\text{U}}}
:= J_{2,I}\brac{j},
\end{align*}
and if $B_{\gamma,2}^{\text{U}} \brac{j \epsilon}+\brac{2\pi}^3 \epsilon<0$
\begin{align*}
q_R^{\brac{2}}\brac{t} & \leq \alpha^{\text{L}}_R \brac{ B_{\gamma,2}^{\text{L}} \brac{ j \epsilon } +\brac{2\pi}^3 \epsilon}
  +\alpha^{\text{U}} \brac{ H_{2}\brac{-\brac{j+1} \epsilon} +   H_{2}\brac{ \brac{j+1} \epsilon } } 
+ G_{3,j,\epsilon}\brac{\beta^{\text{U}},\beta^{\text{U}}}
:= J_{2,R}^{\text{near}}\brac{j}.
\end{align*}
Combining these expressions we obtain a bound on the second derivative of the square of the polynomial $q$,
\begin{align*}
\derTwo{t}{\abs{q}^2}\brac{t} \leq F_{2,j},
\end{align*}
of the form
\begin{align}
 F_{2,j}:= \begin{cases}
2 \brac{ J_{2,R}^{\text{near}}\brac{j} J^{\text{L}}_{0,R}\brac{j} +  J_{2,I} \brac{j} J_{0,I}\brac{j} +  J_{1}\brac{j}^2}, \quad \text{if } B_{\gamma,2}^{\text{U}} \brac{ j \epsilon}<0 \\
 2 \brac{ J_{2,R}\brac{j} J_{0,R}^{\text{U}}\brac{j} +  J_{2,I} \brac{j} J_{0,I}\brac{j} +  J_{1}\brac{j}^2}, \quad \text{otherwise},
\end{cases}
\label{eq:F2j}
\end{align}
which holds for any $t$ such that $\fc \, t \in \sqbr{j \epsilon,\brac{j+1} \epsilon}$. This bound allows us to control the curvature of the squared magnitude of $\abs{q}$ on a fine grid, which can in turn be leveraged to obtain a bound on $\abs{q}$. This is made precise by the following lemma, proved in Section~\ref{proof:bound_2der} of the appendix.
\begin{lemma}
\label{lemma:bound_2der}
If $F$ is a twice continuously-differentiable real-valued function such that
\begin{align*}
F\brac{0}=1, \qquad F^{\brac{1}}\brac{0}=0, \qquad \sup_{ j \epsilon \leq \tilde{t} \leq \brac{j+1}\epsilon } F^{\brac{2}}\brac{\tilde{t}} \leq F_{2,j} ,
\end{align*}
for a certain positive real number $\epsilon$ and all nonnegative integers $j \geq 0$, then if $k \epsilon \leq t \leq \brac{k+1}\epsilon$ for an integer $k$ we have
\begin{align*}
F\brac{ t }& \leq F^{\infty}_k :=1 + \epsilon^2\brac{  \sum_{j=0}^{k-1 } \brac{\sum_{l=1}^{j-1}  F_{2,l} +\mathbbm{1}_{ F_{2,j}>0} F_{2,j} }   +    \mathbbm{1}_{F_{2}>0}F_{2} },  
\end{align*}
where
\begin{align*}
F_{2} = \sum_{j=0}^{k-1} F_{2,j} +\mathbbm{1}_{ F_{2,k}>0} F_{2,k}  .
\end{align*}
\end{lemma}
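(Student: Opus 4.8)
The plan is to obtain the bound by integrating the hypothesis on $F^{\brac{2}}$ twice, using the normalizations $F\brac{0}=1$ and $F^{\brac{1}}\brac{0}=0$, and then discretizing both integrations over the grid cells $\sqbr{j\epsilon,\brac{j+1}\epsilon}$. First I would write $F^{\brac{1}}\brac{s}=\int_0^s F^{\brac{2}}\brac{u}\,\text{d}u$, which is legitimate since $F^{\brac{1}}\brac{0}=0$. Fixing a cell index $j\ge 0$ and a point $s$ with $j\epsilon\le s\le\brac{j+1}\epsilon$ and splitting this integral at the grid points, the contribution of each complete cell $\sqbr{l\epsilon,\brac{l+1}\epsilon}$ with $l<j$ is at most $\epsilon F_{2,l}$, which I keep with its sign since a negative $F_{2,l}$ only strengthens the upper bound, whereas the contribution of the incomplete cell $\sqbr{j\epsilon,s}$ is at most $\brac{s-j\epsilon}F_{2,j}\le\epsilon\,\mathbbm{1}_{F_{2,j}>0}F_{2,j}$ (the partial cell is rounded up to a full cell only when $F_{2,j}>0$, and to $0$ otherwise). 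This yields $F^{\brac{1}}\brac{s}\le\epsilon\,\Phi_j$ on the $j$-th cell, where $\Phi_j:=\sum_{l<j}F_{2,l}+\mathbbm{1}_{F_{2,j}>0}F_{2,j}$; note $\Phi_k$ is precisely the quantity called $F_{2}$ in the statement.

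Next I would integrate a second time. For $t$ with $k\epsilon\le t\le\brac{k+1}\epsilon$, writing $F\brac{t}=1+\int_0^t F^{\brac{1}}\brac{s}\,\text{d}s=1+\sum_{j=0}^{k-1}\int_{j\epsilon}^{\brac{j+1}\epsilon}F^{\brac{1}}\brac{s}\,\text{d}s+\int_{k\epsilon}^t F^{\brac{1}}\brac{s}\,\text{d}s$ and inserting the cell-wise bound from the first step, each complete cell contributes at most $\epsilon\cdot\brac{\epsilon\Phi_j}=\epsilon^2\Phi_j$ (again signs are kept, so nothing is rounded up for the interior cells), and the final incomplete cell $\sqbr{k\epsilon,t}$ contributes at most $\brac{t-k\epsilon}\,\epsilon\,\Phi_k\le\epsilon^2\,\mathbbm{1}_{F_{2}>0}F_{2}$ — the indicator reflecting that when $F_{2}=\Phi_k\le 0$ the bound $\epsilon\Phi_k$ on $F^{\brac{1}}$ over the last cell is nonpositive, so $F$ is nonincreasing there and $F\brac{t}\le F\brac{k\epsilon}$. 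Collecting the contributions and substituting the definition of $\Phi_j$ gives $F\brac{t}\le F^{\infty}_k$.

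The argument is elementary — two applications of the fundamental theorem of calculus together with a grid decomposition — so there is no substantial obstacle; the only point that demands care is the bookkeeping of the indicator functions: one must round an incomplete cell up to a full cell only where doing so is genuinely an over-estimate, retain all honestly negative second-derivative contributions rather than discard them (this is exactly what keeps $F^{\infty}_k$ small enough to be useful in Lemma~\ref{lemma:concavity}, where the dominant curvature term is negative because $K_{\gamma}^{\brac{2}}\brac{0}<0$ by Lemma~\ref{lemma:K20}), and handle the degenerate cases $j=0$ and $t=k\epsilon$ consistently. The same estimate can equivalently be packaged as an induction on $k$ that peels off the last cell at each step.
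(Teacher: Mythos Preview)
Your proposal is correct and follows essentially the same argument as the paper: two applications of the fundamental theorem of calculus with the integrals split along the grid $\{j\epsilon\}$, bounding each full cell by $\epsilon F_{2,l}$ and the trailing partial cell by $\epsilon\,\mathbbm{1}_{F_{2,j}>0}F_{2,j}$ (respectively $\epsilon^2\,\mathbbm{1}_{F_2>0}F_2$ at the second integration). Your remark that the inner sum should naturally start at $l=0$ rather than $l=1$ is consistent with the paper's own intermediate step, which reads $\sum_{l=0}^{j-1}F_{2,l}$ before a harmless index slip in the final display.
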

We apply the lemma by setting $F=\abs{q}^2$. By Lemma~\ref{lemma:coeffs}, $F\brac{0}=\abs{q\brac{0}}^2=1$ and $F^{\brac{1}}\brac{0}= 2\, q_R\brac{0}q_R^{\brac{1}}\brac{0}+2\, q_I\brac{0} q_I^{\brac{1}}\brac{0}=0$. To obtain a bound on $\abs{q}^2$ we evaluate the bound computationally on a grid with step size $\epsilon = 10^{-6}$ covering the interval $\sqbr{0,0.3}$. The corresponding values of $F^{\infty}_k$ are plotted on the left of Figure~\ref{fig:F2j}. Since $F^{\infty}_k$ lies below $1-6.394 \, 10^{-7}$ between $k=1$ and $k=\taumiddle/\epsilon$, we have
\begin{align*}
\abs{q\brac{t}} \leq 1-3.197 \, 10^{-7}, \qquad \abs{t-t_k} \leq \taumiddle \; \lambda_c, \quad t_k \in T,
\end{align*}
since we fixed $t_0 =0$ without loss of generality. 

For $t$ belonging to the interval $\brac{0,\epsilon}$ we apply the following inequality, which follows from the fundamental theorem of calculus and the facts that $F'\brac{0}=0$ and $F_{2,0} \leq -3.23\,\fc^2$ (the proof of Lemma~\ref{lemma:bound_2der} is based on a similar argument)
\begin{align*}
\sup_{t \in \brac{0,\epsilon}} \abs{q\brac{ t }}^2 & \leq \sup_{t \in \brac{0,\epsilon}} F\brac{ 0 }  + t^2 \sup_{u \in\sqbr{0,\epsilon}} F''\brac{u}\\
& = \sup_{t \in \brac{0,\epsilon}} 1 + F_{2,0}\,t^2 \\
& \leq  \sup_{t \in \brac{0,\epsilon}} 1 -3.23 \,\fc^2 t^2\\
& < 1.
\end{align*}

\begin{figure}[t]
\begin{tabular}{ >{\centering\arraybackslash}m{0.4\linewidth}  >{\centering\arraybackslash}m{0.03\linewidth} >{\centering\arraybackslash}m{0.4\linewidth} }
Bound on $\abs{q}^2$ &&  \begin{center} Bound on $\derTwo{t}{\abs{q}^2}$ \end{center} \\ 
\includegraphics{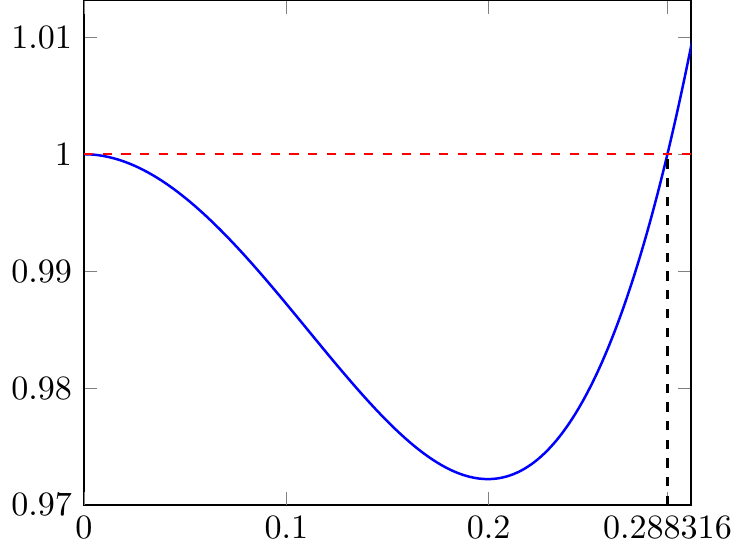}
&&
\includegraphics{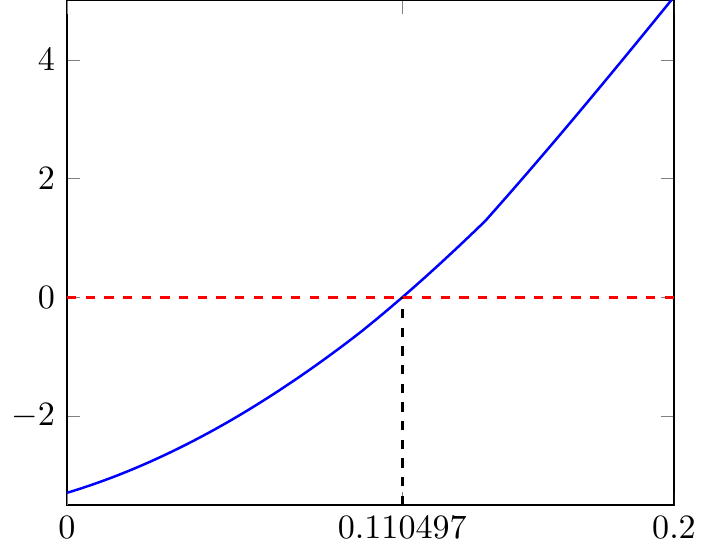}
\end{tabular}
\caption{On the left, plot of $F^{\infty}_k$ obtained by applying Lemma~\ref{lemma:bound_2der} with $F=\abs{q}^2$. The bound is evaluated on a grid with step size $\epsilon = 10^{-6}$ covering the interval $\sqbr{0,0.3}$. Between $10^{-6}$ and $\taumiddle$ it remains strictly below 1.
On the right, plot of $F_{2,j}$~\eqref{eq:F2j} evaluated on a grid with step size $\epsilon = 10^{-6}$ covering the interval $\sqbr{0,0.2}$. Between $10^{-6}$ and $\taumiddleSecondDer$ the bound is strictly negative.}
\label{fig:F2j}
\end{figure}

Finally, to establish the quadratic bound~\eqref{eq:q_quadratic} on the magnitude of $q$ we prove that in the interval the second derivative of $\abs{q}$ lies strictly below zero. In fact, we can make use of the bound~\eqref{eq:F2j}. Since
\begin{align*}
\derTwo{t}{\abs{q}}\brac{t} & = -\frac{\brac{q_R\brac{t}q_R^{\brac{1}}\brac{t}+
    q_I\brac{t}q_I^{\brac{1}}\brac{t}}^2}{\abs{q\brac{t}}^3}+
\frac{q_R\brac{t} q_R^{\brac{2}}\brac{t}+ q_I\brac{t}
  q_I^{\brac{2}}\brac{t} \abs{\tilde{q}_1\brac{t}}^2}{\abs{q\brac{t}}} \\
 & = -\frac{\brac{q_R\brac{t} q_R^{\brac{1}}\brac{t}+
    q_I\brac{t} q_I^{\brac{1}} \brac{t}}^2}{\abs{q\brac{t}}^3}+
\frac{1}{2\abs{q\brac{t}}}\derTwo{t}{\abs{q}^2}\brac{t}
\end{align*}
this second derivative is negative if $\derTwo{t}{\abs{q}^2}$ is negative. To bound this latter quantity we evaluate the bound $F_{2,j}$~\eqref{eq:F2j} on a grid with step size $\epsilon = 10^{-6}$ covering the interval $\sqbr{0,0.2}$. The result is plotted on the right of Figure~\ref{fig:F2j}. Between $0$ and $\taumiddleSecondDer$ the bound is strictly negative, which immediately implies that this is also the case for the second derivative of $\abs{q}$. This establishes the upper bound in~\eqref{eq:q_quadratic}, because $q\brac{t_0}=1$ and the derivative of $\abs{q}$ is zero by Lemma~\ref{lemma:coeffs}. To prove the lower bound we just need to bound the second derivative of the real and imaginary parts of $q-v_j$. Since the argument is almost identical to the one used to bound the second derivative of $\abs{q}^2$ we omit the details.

\subsection{Proof of Lemma~\ref{lemma:boundq}}
\label{proof:boundq}

\begin{figure}[t]
\begin{center}
\includegraphics{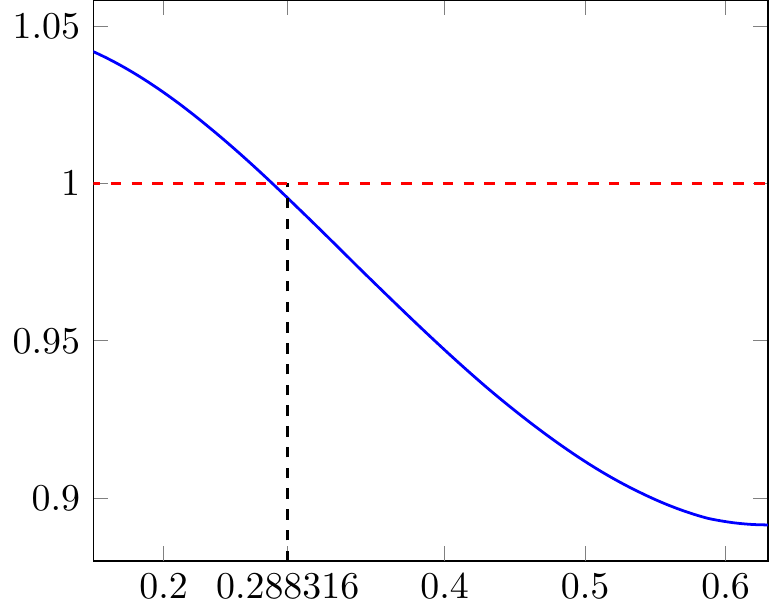}
\end{center}
\caption{The bound~\eqref{eq:qabs} is evaluated on a grid with step size $\epsilon = 10^{-6}$ covering the interval $\sqbr{0,0.3}$. Between $\taumiddle$ and $\taumin/2$ it remains strictly below 1.}
\label{fig:qabs}
\end{figure}

By Lemmas~\ref{lemma:coeffs}, \ref{lemma:bounds_near} and~\ref{lemma:kernelsum} and Corollary~\ref{cor:boundinf}
\begin{align}
  \abs{q\brac{t}} & \leq  G_{0,j,\epsilon}\brac{\alpha^{\text{U}},\alpha^{\text{U}}} +  G_{1,j,\epsilon}\brac{\beta^{\text{U}},\beta^{\text{U}}}, \label{eq:qabs}
\end{align}  
for any $t \leq \Deltamin/2$ in $\sqbr{j \epsilon,\brac{j+1} \epsilon}$, where $G_{\ell,j,\epsilon}$ is defined in~\eqref{eq:Gdef}. We evaluate the bound numerically on a grid with step size $\epsilon = 10^{-6}$ to establish that in the interval $\sqbr{\taumiddle, \taumin/2}$
\begin{align*}
  \abs{q\brac{t}} & \leq 1-4.641 \, 10^{-3}.
\end{align*}
The numerical values are plotted in Figure~\ref{fig:qabs}. 

Together with Lemma~\ref{lemma:concavity}, this completes the proof except for the following case: The first spike to the right of $t_0$, which we denote by $t_1$, might be at a larger distance than $\Deltamin$, so it is still necessary to bound $q$ in the interval $\sqbr{\Deltamin/2,t_0+\frac{t_1-t_0}{2} }$. Let $t$ be a point in this interval, then
\begin{align*}
 \abs{q\brac{t}} & = \abs{\sum_{t_j \in T} \alpha_j K_{\gamma}\brac{t-t_j} + \beta_j K_{\gamma}^{\brac{1}}\brac{t-t_j}}\notag\\
  & \leq \alpha^{\text{U}} \brac{ \abs{K_{\gamma}\brac{t-t_0}} +  \sum_{t_j \in T/\keys{t_0}}  \abs{K_{\gamma}\brac{t-t_j} } } + \beta^{\text{U}}\brac{ \abs{K_{\gamma}^{\brac{1}}\brac{t-t_0}} +  \sum_{t_j \in T/\keys{t_0}}  \abs{K_{\gamma}^{\brac{1}}\brac{t-t_j} } } \\
  & \leq \alpha^{\text{U}} \brac{ \max \keys{\max_{ u \in \mathcal{G}} B_{\gamma,0}^{ \infty } \brac{u, \epsilon}, b_{\gamma,0} \brac{ 2 \taumin }} + H_0\brac{\taumin/2} + H_0\brac{0} }\\
  & \quad + \beta^{\text{U}} \brac{ \max \keys{\max_{ u \in \mathcal{G}} B_{\gamma,1}^{ \infty } \brac{u, \epsilon}, b_{\gamma,1} \brac{ 2 \taumin }} + H_1\brac{\taumin/2} + H_1\brac{0} },
  \end{align*}
where $\mathcal{G}$ is a grid with step size $\epsilon$ covering the interval $\sqbr{\taumin/2,2 \taumin}$. The bound follows from Lemmas~\ref{lemma:coeffs}, \ref{lemma:bounds_near} and~\ref{lemma:kernelsum} and Corollary~\ref{cor:boundinf}. In particular, from the definition of $H_{\ell}$ in Lemma~\ref{lemma:kernelsum} it is not difficult to show that if $\taumin/2 \leq t \leq t_1-\taumin/2$ then
\begin{align*}
\sum_{t_j \in \, T \, \cap  \brac{0,\frac{1}{2}} }  \abs{K_{\gamma}^{\brac{\ell}}\brac{t-t_j} }  \leq H_{\ell}\brac{\taumin/2} \quad \text{and} \quad \sum_{t_j \in \, T \, \cap \left[-\frac{1}{2},0\right)  }  \abs{K_{\gamma}^{\brac{\ell}}\brac{t-t_j} }  \leq H_{\ell}\brac{0} .
  \end{align*}
Finally, we evaluate the bound numerically for a step size of $\epsilon = 10^{-6}$ to prove that
\begin{align*}
 \abs{q\brac{t}} & \leq 0.896
  \end{align*}
in $\sqbr{\Deltamin/2,t_0+\frac{t_1-t_0}{2} }$.

\section{Conclusion and future research}
In this work we provide a reasonably tight characterization of the performance of super-resolution via convex programming when the signal consists of point sources in one dimension that are separated by a minimum distance. Interesting research directions include developing a sharper bound than the one provided by~\cite{superres} for two-dimensional signals, studying other conditions that allow for a small degree of clustering in the support of the signal as in~\cite{venia_positive} and extending the analysis to other signal models such as piecewise-constant signals in two dimensions. We have also derived optimization-based algorithms to tackle the demixing of sines and spikes and the super-resolution of point sources with a common support. In both cases, it would be interesting to derive theoretical guarantees by building the appropriate dual certificates, as outlined in Sections~\ref{sec:dualcert_sinesspikes} and \ref{sec:dual_cert_commonsupport}. Finally, applying optimization-based approaches in real applications will require developing efficient algorithms to solve the corresponding optimization programs on large-scale datasets (see~\cite{sparse_inverse_ben} for recent work in this direction that is relevant to super-resolution).

\subsection*{Acknowledgements}
The author is grateful to Emmanuel Cand\`es for useful discussions and to the anonymous reviewers for helpful suggestions.
\begin{small}
\bibliographystyle{abbrv}
 \bibliography{refs_sr}
\end{small}
\appendix
\section{Proof of auxiliary results in Sections~\ref{sec:superres_point_sources} and~\ref{sec:convex_framework}}
\label{sec:proof_aux}
\subsection{Proof of Proposition~\ref{prop:sdp-charact}}
\label{proof:sdp-charact}
Let us define
\begin{align*}
z_0(t) & := \MAT{e^{ i 2 \pi \fc t} & e^{ i 2 \pi \brac{\fc-1} t} & \dots & e^{- i 2 \pi \fc t}}^T, \\
z_1(t) & :=  \MAT{1 & e^{ i 2 \pi t} & \dots & e^{ i 2 \pi n t} }^T.
 \end{align*}
Note that $\brac{ \mathcal{F}_{n}^{\ast} \, C_k}(t)= z_0(t)^{\ast} C_k $, so that $\sum_{k} \abs{ \mathcal{F}_{n}^{\ast} \, C_k}^2 = z_0(t)^{\ast} C C^{\ast} z_0(t) $.
If~\eqref{eq:sdp-charact} holds then $\Lambda-CC^{\ast} \succeq 0$ and $\mathcal{T}^{\ast}\brac{\Lambda}= e_1$. This implies 
\begin{align*}
\sum_{k} \abs{(\mathcal{F}_{n}^{\ast} \, C_k)(t)}^2 & = z_0(t)^{\ast}CC^{\ast} z_0(t)\\
& \leq z_0(t)^{\ast}\Lambda z_0(t)\\
& = \text{Tr}\brac{ z_0(t)^{\ast} \Lambda z_0(t)}\\
& =\PROD{\Lambda}{ z_0(t) z_0(t)^{\ast}}\\
&= \PROD{\Lambda}{\mathcal{T}\brac{z_1\brac{t}}}\\
&=\PROD{\mathcal{T}^{\ast}\brac{\Lambda}}{ z_1\brac{t} }\\
& =  \brac{z_1(t)}_1 = 1.
 \end{align*}
For the converse, $\sum_{k} \abs{ \mathcal{F}_{n}^{\ast} \, C_k}^2  \leq 1$ implies that the trigonometric polynomial $1-z_0(t)^{\ast}CC^{\ast} z_0(t)$ is non-negative. By the Fej\'er-Riesz Theorem there exists a polynomial $P(t)=\tilde{c}^{\ast} z_0(t)$ such that 
\begin{align}
1-z_0(t)^{\ast}CC^{\ast} z_0(t)=\abs{P(t)}^2=z_0(t)^{\ast}\tilde{c}\tilde{c}^{\ast} z_0(t). \label{eq:dsquare}
 \end{align}
Now let us set $\Lambda=CC^{\ast}+\tilde{c}\tilde{c}^{\ast}$. $\Lambda$ and $\Lambda-CC^{\ast}$ are positive semidefinite by construction, which implies
\begin{align*}
\MAT{\Lambda & C \\ C^{\ast} & \Id} \succeq 0.
 \end{align*}
Furthermore, by~\eqref{eq:dsquare}
\begin{align*}
1 & = z_0(t)^{\ast} \Lambda z_0(t)\\
& = \PROD{\Lambda}{ z_0(t) z_0(t)^{\ast} }\\
& = \PROD{\Lambda}{\mathcal{T}\brac{z_1(t)}}\\
& = \PROD{\mathcal{T}^{\ast}\brac{\Lambda}}{ z_1\brac{t} }
 \end{align*}
 for all $t$, which is only possible if $\mathcal{T}^{\ast}\brac{\Lambda}= e_1$. This completes the proof.
\subsection{Proof of Lemma~\ref{lemma:dual_sinesspikes}}
\label{proof:dual_sinesspikes}
The Lagrangian is equal to
\begin{align*}
\mathcal{L}\brac{\tilde{x},\tilde{s},c} & = \normTV{\tilde{x}} + \eta \normOne{\tilde{s}} + \PROD{c}{y-\mathcal{F}_{n} \, \tilde{x} - \tilde{s} }\\
& = \PROD{c}{y} + \normTV{\tilde{x}}-  \PROD{\mathcal{F}_{n}^{\ast} \,c}{\tilde{x}} + \eta \normOne{\tilde{s}} - \PROD{c}{ \tilde{s}}
\end{align*}
where $c \in \C^{n}$ is the dual variable. The lower bound on the cost function provided by minimizing the Lagrangian over $\tilde{x}$ and $\tilde{s}$ is trivial ($-\infty$) unless $\abs{ \brac{\mathcal{F}_{n}^{\ast} \, c} (t)} \leq 1$ for all $t$ and $\normInf{c} \leq \eta$. Under these constraints, the minimum is attained for $\tilde{x}=0$ and $\tilde{s}=0$ for any value of $c$. Maximizing the remaining expression over $c$ yields the dual problem.

\subsection{Proof of Lemma~\ref{lemma:primaldual_sinesspikes}}
\label{proof:primaldual_sinesspikes}
The interior of the feasible set of Problem~\eqref{eq:TVnormMin_sdp_sinesspikes} contains the origin and is
consequently non empty, so strong duality holds by a generalized
Slater condition \cite{rockafellar1974conjugate} and we have
\begin{align*}
\sum_{t_j \in \widehat{T}} \abs{\hat{a}_{j}} +\regpar \sum_{l \in \widehat{\Set}} \abs{\hat{s}_{l}}  = \normTV{\hat{x}} + \regpar \normOne{\hat{s}} & = \<\hat{c},y \>\\ 
& = \<\hat{c},\mathcal{F}_{n} \, \hat{x}+ \hat{s} \>\\
& = \operatorname{Re}\sqbr{\sum_{t_j \in \widehat{T}} \abs{\hat{a}_{j}}  \overline{\brac{\mathcal{F}_{n}^{\ast} \, \hat{c}}\brac{t_j}} e^{i\phi_j} +\sum_{l \in \widehat{\Set}} \abs{\hat{s}_{l}} \overline{\hat{c}_l}e^{i \psi_l}}.
\end{align*}
By H\"older's inequality and the constraints on $\hat{c}$,
this implies the result (otherwise equality would not be possible).

\subsection{Proof of Lemma~\ref{lemma:cert_sinesspikes}}
\label{proof:cert_sinesspikes}
For any vector $v$ and any atomic measure $\nu$, we denote by $v_{\Omega}$ and $\nu_{\Omega}$ the restriction of $v$ and $\nu$ to the subset of their support indexed by a set $\Omega$ ($v_{\Omega}$ has the same dimension as $v$ and $\nu_{\Omega}$ is still a measure in the unit interval). Let us consider an arbitrary solution $x'$ and $s'$, such that $x'\neq x$ or $s'\neq s$. Due to the constraints of the optimization problem, $x'$ and $s'$ satisfy 
\begin{align}
y=\mathcal{F}_n \, x+s=\mathcal{F}_n \, x'+s'. \label{eq:constraint}
\end{align}
The following lemma proved in Section~\ref{proof:F_T_I} below establishes that $x_{T^c}'$ and $s_{\Set^c}'$ cannot both be equal to zero.
\begin{lemma}
\label{lemma:F_T_I}
If $\keys{x',s'}$ is feasible and $x_{T^c}'$ and $s_{\Set^c}'$ both equal zero, then $x=x'$ and $s=s'$.
\end{lemma}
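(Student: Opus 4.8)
The plan is to show that the feasibility constraint together with the two support restrictions already pins down $x'$ and $s'$, so that necessarily $(x',s')=(x,s)$. Write $h:=x-x'$ and $\omega:=s-s'$. Since $x$ has support $T$ and $x'_{T^c}=0$, the difference $h$ is an atomic measure carried by $T$, hence with at most $|T|$ atoms; likewise $\omega\in\C^n$ is supported on $S$. Subtracting the two identities $\mathcal{F}_{n}x+s=y=\mathcal{F}_{n}x'+s'$ gives $\mathcal{F}_{n}h+\omega=0$, i.e. $\mathcal{F}_{n}h=-\omega$, so the Fourier vector $\mathcal{F}_{n}h$ vanishes on the $n-|S|\ge|T|$ indices lying outside $S$. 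If I can conclude $h=0$, then $\omega=-\mathcal{F}_{n}h=0$ and the lemma follows.

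The way I would force $h=0$ is to establish that the linear map $(\mu,v)\mapsto\mathcal{F}_{n}\mu+v$ is injective on the subspace of pairs $(\mu,v)$ with $\mu$ a measure supported on $T$ and $v\in\C^n$ supported on $S$. Arranging the $|T|$ Fourier vectors $\bigl(e^{-i2\pi kt_j}\bigr)_{|k|\le\fc}$, $t_j\in T$, as the columns of $\Phi_T\in\C^{n\times|T|}$ and the $|S|$ standard basis vectors $e_l$, $l\in S$, as the columns of $I_S\in\C^{n\times|S|}$, this injectivity is exactly the statement that the block matrix $[\,\Phi_T\ \ I_S\,]\in\C^{n\times(|T|+|S|)}$ has full column rank. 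Here $\Phi_T$ itself has rank $|T|$, since any $|T|$ consecutive rows form a Vandermonde matrix in the distinct nodes $e^{-i2\pi t_j}$ (and $|T|\le n$ by hypothesis), and the $e_l$ are trivially independent; what remains is to check that $\colspan(\Phi_T)\cap\operatorname{span}\{e_l:l\in S\}=\{0\}$, equivalently that no nonzero measure supported on $T$ has $\mathcal{F}_{n}$-image supported inside $S$. This I would argue through the annihilating-filter (Prony) structure: for $\mu=\sum_j b_j\delta_{t_j}$ the sequence $k\mapsto\sum_j b_j e^{-i2\pi kt_j}$ satisfies a linear recurrence of order $|T|$ whose characteristic roots are the distinct $e^{-i2\pi t_j}$, so vanishing on a block of $|T|$ of the consecutive Fourier indices $\{-\fc,\dots,\fc\}$ already forces $\mu=0$.

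The step I expect to be the real obstacle is precisely this last one — showing $\colspan(\Phi_T)\cap\operatorname{span}\{e_l:l\in S\}=\{0\}$. It does not follow from the dimension inequality $|T|+|S|\le n$ by itself, because $S$ could in principle be spread out so that its complement contains no run of $|T|$ consecutive Fourier indices. To close this gap one must exploit the actual structure of the problem rather than a naive count: the minimum-separation assumption on $T$ inherited from the setting of Theorem~\ref{theorem:noiseless}, together with the sharp kernel estimates of Section~\ref{sec:kernel}, or alternatively the mere existence of the dual certificate $q$ furnished by the hypotheses of Lemma~\ref{lemma:cert_sinesspikes} (whose coefficient vector is strictly bounded in modulus off $S$). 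Once this linear-independence is in hand, the conclusion $h=0$, $\omega=0$, hence $x'=x$ and $s'=s$, is immediate, and the rest of the argument is routine.
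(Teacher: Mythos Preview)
Your approach is the same as the paper's: both reduce the claim to the full column rank of the block matrix $[\Phi_T\ I_S]$, and the paper simply asserts this rank statement ``follows from the fact that $F_T$ is a submatrix of a Vandermonde matrix.'' You are right to flag that step as the real obstacle --- the Vandermonde property of $\Phi_T$ alone says nothing about whether $\colspan(\Phi_T)$ meets $\operatorname{span}\{e_l:l\in S\}$ nontrivially.

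In fact the obstacle is genuine: the lemma is \emph{false} under the stated hypotheses, and neither of your proposed fixes (minimum separation, or existence of the dual certificate from Lemma~\ref{lemma:cert_sinesspikes}) rescues it. Take $\fc=1$ (so $n=3$), $T=\{0,1/2\}$, $S=\{0\}$ (the index $k=0$), $\eta=1/\sqrt{2}$, $x=\delta_0+i\,\delta_{1/2}$, and $s_0=e^{i\pi/4}$. Then $|T|+|S|=3=n$, and the vector $c=\bigl((1-i)/4,\ (1+i)/2,\ (1-i)/4\bigr)$ is a valid dual certificate: $|c_0|=\eta$, $|c_{\pm1}|=\eta/2<\eta$, $q(t)=\tfrac{1-i}{2}\cos(2\pi t)+\tfrac{1+i}{2}$ satisfies $q(0)=1$, $q(1/2)=i$, and $|q(t)|^2=\tfrac{1}{2}(1+\cos^2 2\pi t)<1$ off $T$. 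Yet
\[
[\Phi_T\ I_S]=\begin{pmatrix}1&-1&0\\1&1&1\\1&-1&0\end{pmatrix}
\]
has rank $2$: the pair $h=\delta_0+\delta_{1/2}$, $\omega=-2e_0$ lies in the null space, so $(x',s')=(x+bh,\,s+b\omega)$ is feasible with $x'_{T^c}=0$, $s'_{S^c}=0$ for every $b$, while $x'\neq x$.

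So the injectivity you are trying to establish simply does not hold, and the paper's one-line justification is in error. What \emph{does} survive in the example is the uniqueness conclusion of Lemma~\ref{lemma:cert_sinesspikes}: the null-space direction rotates the phase of $x'$ at $t=1/2$ away from $i$, so the certificate still yields a strict inequality $\langle q,x'\rangle<\|x'\|_{\mathrm{TV}}$. A correct proof of uniqueness should therefore bypass Lemma~\ref{lemma:F_T_I} and argue directly that any feasible $(x',s')$ supported on $T\times S$ but different from $(x,s)$ must violate the sign pattern enforced by the certificate; the route through linear independence of $[\Phi_T\ I_S]$ that both you and the paper take cannot be completed.
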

This implies that $x_{T^c}'$ and $s_{\Set^c}'$ cannot be a solution, since
\begin{align}
  \normTV{x'} +  \regpar \normOne{s'} & = \normTV{x_{T}'} +\normTV{x_{T^c}'} +  \regpar \normOne{s_{\Set}'} +  \regpar \normOne{s_{\Set^c}'}\\
& > \normTV{x_{T}'} +\PROD{Q}{x_{T^c}'} +  \regpar \normOne{s_{\Set}'} +  \PROD{c}{s_{\Set^c}'} \qquad \text{by Lemma~\ref{proof:F_T_I}, \eqref{eq:opt1} and \eqref{eq:opt2}} \notag \\
& \geq \PROD{Q}{x'} +  \PROD{c}{s'} \qquad \text{by \eqref{eq:opt1} and \eqref{eq:opt2}}\\
&=\PROD{\mathcal{F}_n^{\ast}c}{x'} +  \PROD{c}{s'}\\
&=\PROD{c}{\mathcal{F}_n \, x'+s'}\\
&=\PROD{c}{\mathcal{F}_n \, x+s} \qquad \text{by \eqref{eq:constraint}} \\
&=\PROD{\mathcal{F}_n^{\ast}c}{x} +  \PROD{c}{s}\\
&=\PROD{Q}{x} +  \PROD{c}{s}\\
& \geq   \normTV{x} +  \regpar \normOne{s} \qquad \text{by \eqref{eq:opt1} and \eqref{eq:opt2}}.
\end{align}

\subsubsection{Proof of Lemma~\ref{lemma:F_T_I}}
\label{proof:F_T_I}
If $x_{T^c}'$ and $s_{\Set^c}'$ both equal zero, then
\begin{align}
\mathcal{F}_n \, x+s - \mathcal{F}_n \, x_{T}'-s_{\Set}'  & = \mathcal{F}_n \, x'+s' - \mathcal{F}_n \, x_{T}'-s_{\Set}' \qquad \text{by \eqref{eq:constraint}} \\
& = \mathcal{F}_n \, x_{T^c}'+s_{\Set^c}'   \\
& = 0.  \label{eq:zero} 
\end{align}
We index the entries of $\Set := \keys{i_1,i_2, \ldots,i_{\abs{\Set}}}$ and define the matrix $\MAT{F_T & \Id_{\Set}} \in \C^{n \times \brac{\abs{T}+\abs{\Set}}}$, where
\begin{align}
\brac{F_T}_{lj} & = e^{i2\pi l t_j} \quad \text{for } 1\leq l \leq n, \; 1\leq j \leq  \abs{T}, \\
\brac{ \Id_{\Set}}_{lj} & = 
\begin{cases} 1 \quad \text{if } l =i_j \\ 
0 \quad \text{otherwise}\end{cases}\quad \text{for } 1\leq l \leq n, \; 1\leq j \leq  \abs{\Set}.
\end{align}
If $\abs{T}+ \abs{\Set} \leq n$ then $\MAT{F_T & \Id_{\Set}}$ is full rank (this follows from the fact that $F_T$ is a submatrix of a Vandermonde matrix). Equation~\eqref{eq:zero} implies
\begin{align}
\MAT{F_T & \Id_{\Set}} \MAT{a-a' \\ \mathcal{P}_{\Set}s - \mathcal{P}_{\Set} s'} = 0,
\end{align}
where $\mathcal{P}_{\Set} u' \in \C^s$ is the subvector of $u'$ containing the entries indexed by $\Set$ and $a' \in \C^T$ is the vector containing the amplitudes of $x'$ (recall that by assumption $x_{T^c}'=0$). We conclude that $x=x'$ and $s=s'$. 

\subsection{Proof of Lemma~\ref{lemma:dual_commonsupport}}
\label{proof:dual_commonsupport}
The Lagrangian is equal to
\begin{align*}
\mathcal{L}\brac{\widetilde{X}, C} & = \normgTV{\widetilde{X}} + \PROD{C}{Y -\MAT{\mathcal{F}_{n} \, \widetilde{X}_1 & \mathcal{F}_{n} \, \widetilde{X}_2 & \cdots & \mathcal{F}_{n} \, \widetilde{X}_m }} \\
& = \PROD{C}{Y} + \normgTV{\widetilde{X}} - \sum_{k=1}^{m}  \PROD{C_k}{\mathcal{F}_{n} \, \widetilde{X}_k}\\
& = \PROD{C}{Y} + \normgTV{\widetilde{X}} - \sum_{k=1}^{m}  \PROD{\mathcal{F}_{n}^{\ast} \, C_k}{ \widetilde{X}_k}\\
& = \PROD{C}{Y} +  \sup_{\normTwo{F\brac{t}} \leq 1} \sum_{k=1}^{m} \operatorname{Re}\keys{ \int_{\mathbb{T}} \overline{F_k \brac{t}} X_k\brac{\text{d}t} } -  \sum_{k=1}^{m} \operatorname{Re}\keys{ \int_{\mathbb{T}} \overline{\mathcal{F}_{n}^{\ast} \, C_k \brac{t}} X_k\brac{\text{d}t} }
\end{align*}
where $C \in \C^{n\times m}$ is the dual variable. The lower bound on the cost function provided by minimizing the Lagrangian over $\tilde{X}$ is trivial ($-\infty$) unless 
\begin{align*}
\sum_{k=1}^{m} \abs{ \brac{\mathcal{F}_{n}^{\ast} \, C_k} (t)}^2 \leq 1
\end{align*}
for all $t$. Under these constraints, the minimum is attained for $\tilde{X}=0$. Maximizing the remaining expression over $C$ yields the dual problem.

\subsection{Proof of Lemma~\ref{lemma:pol_commonsupport}}
\label{proof:pol_commonsupport}
Let us define the matrix $P \in \C^{\abs{T} \times m}$,
\begin{align*}
P_{jk} := \brac{\mathcal{F}_{n}^{\ast} \, \widehat{C}_k} \brac{t_j}.
\end{align*}
The result is an immediate consequence of the following equality
\begin{align}
\label{eq:P_A}
P_{j:}  = \frac{A_{j:}}{ \norm{A_{j:}}},
\end{align}
where $M_{j:}$ denotes the $j$th row of a matrix $M$. 

By strong duality, which holds by a generalized Slater condition \cite{rockafellar1974conjugate} we have that
\begin{align*} 
\normgTV{\widehat{X}} =  \<Y, \widehat{C} \>.
\end{align*}
On the one hand, 
\begin{align*} 
\normgTV{\widehat{X}} =  \sum_{j=1}^{\abs{T}} \normTwo{A_{j:}}.
\end{align*}
On the other,
\begin{align*} 
\<Y, \widehat{C} \> & =  \sum_{k=1}^{m} \<Y_k, \widehat{C}_k \>\\
&  =  \sum_{k=1}^{m} \<\mathcal{F}_n \widehat{X}_k, \widehat{C}_k \> \\
&  = \sum_{k=1}^{m} \< \widehat{X}_k, \mathcal{F}_n^{\ast} \widehat{C}_k \> \\
& = \sum_{j=1}^{\abs{T}}  \sum_{k=1}^{m} \text{Re} \keys{ \hat{A}_{jk} \; \overline{\mathcal{F}_n^{\ast} \widehat{C}_k \brac{ \hat{t}_j} }} \\
& = \sum_{j=1}^{\abs{T}} \PROD{A_{j:}}{P_{j:}}.
\end{align*}
This combined with the following lemma implies~\eqref{eq:P_A}.
\begin{lemma}
Let $M$, $N$ $\in \C^{n_1 \times n_2}$, such that the rows of $N$ have non-zero norm, if 
\begin{align}
& \sum_{j=1}^{n_1} \PROD{M_{j:}}{N_{j:}}  = \sum_{j=1}^{n_1} \normTwo{N_{j:}}, \label{eq:equality_MN}\\
& \max_{1 \leq j \leq n_1} \normTwo{M_{j:}}  \leq 1 \label{eq:bound_M}
\end{align}
then 
\begin{align*}
M_{jk} & = \frac{N_{jk}}{ \normTwo{N_{j:}}}.
\end{align*}
\end{lemma}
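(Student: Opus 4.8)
The plan is to reduce the statement to a row-by-row application of the Cauchy--Schwarz inequality together with its equality case. Fix a row index $j \in \keys{1,\dots,n_1}$ and write $z_j := \sum_{k} \overline{M_{jk}} N_{jk}$ for the Hermitian inner product of the $j$th rows of $M$ and $N$, so that $\PROD{M_{j:}}{N_{j:}} = \Real{z_j}$ in the notation of the paper. By Cauchy--Schwarz, $\abs{z_j} \leq \normTwo{M_{j:}} \, \normTwo{N_{j:}}$, so
\begin{align*}
\PROD{M_{j:}}{N_{j:}} = \Real{z_j} \leq \abs{z_j} \leq \normTwo{M_{j:}} \, \normTwo{N_{j:}} \leq \normTwo{N_{j:}},
\end{align*}
where the final inequality uses hypothesis \eqref{eq:bound_M}. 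In particular the difference $\normTwo{N_{j:}} - \PROD{M_{j:}}{N_{j:}}$ is nonnegative for every $j$.

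Summing this chain over $j$ and comparing with \eqref{eq:equality_MN}, the quantity $\sum_{j=1}^{n_1} \brac{ \normTwo{N_{j:}} - \PROD{M_{j:}}{N_{j:}} }$ equals zero while each summand is nonnegative; hence every summand vanishes, and for each $j$ the displayed chain collapses to a string of equalities:
\begin{align*}
\Real{z_j} = \abs{z_j} = \normTwo{M_{j:}} \, \normTwo{N_{j:}} = \normTwo{N_{j:}}.
\end{align*}
Since $\normTwo{N_{j:}} > 0$ by assumption, the last equality forces $\normTwo{M_{j:}} = 1$. The equality $\abs{z_j} = \normTwo{M_{j:}}\normTwo{N_{j:}}$ is the equality case of Cauchy--Schwarz, which (as $N_{j:} \neq 0$) gives $M_{j:} = \mu_j N_{j:}$ for some complex scalar $\mu_j$; then $z_j = \overline{\mu_j}\,\normTwo{N_{j:}}^2$, and $\Real{z_j} = \abs{z_j}$ forces this to be real and nonnegative, so $\mu_j \geq 0$ is real. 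Combining $M_{j:} = \mu_j N_{j:}$ with $\normTwo{M_{j:}} = 1$ yields $\mu_j = 1/\normTwo{N_{j:}}$, and therefore $M_{jk} = N_{jk}/\normTwo{N_{j:}}$ for every $k$, which is the claim.

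There are no estimates to grind through; the only delicate point — the one I would be careful to spell out — is the equality analysis, namely that matching the \emph{real part} $\Real{z_j}$ (not merely $\abs{z_j}$) against the product of norms pins down both the modulus and the argument of $M_{j:}$ relative to $N_{j:}$. This is exactly why the two-step bound $\Real{z_j} \leq \abs{z_j} \leq \normTwo{M_{j:}}\normTwo{N_{j:}}$ is needed rather than a single invocation of Cauchy--Schwarz. Finally, note that this lemma immediately gives \eqref{eq:P_A} upon taking $M = P$ and $N = A$, which completes the proof of Lemma~\ref{lemma:pol_commonsupport}.
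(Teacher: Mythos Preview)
Your proof is correct and follows essentially the same approach as the paper: Cauchy--Schwarz row-by-row, then the summed equality forces equality in each row, then the equality case pins down $M_{j:}$. The only cosmetic difference is in the final step: the paper expands $\normTwo{M_{j:} - N_{j:}/\normTwo{N_{j:}}}^2 = \normTwo{M_{j:}}^2 - 1 \leq 0$ directly, whereas you invoke the equality case of Cauchy--Schwarz to get $M_{j:} = \mu_j N_{j:}$ and then solve for $\mu_j$; your version is arguably cleaner in being explicit about the two-step bound $\Real{z_j} \leq \abs{z_j} \leq \normTwo{M_{j:}}\normTwo{N_{j:}}$ and why the real-part condition fixes the phase.
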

\begin{proof}
By ~\eqref{eq:bound_M} and the Cauchy-Schwarz inequality, $\PROD{M_{j:}}{N_{j:}} \leq  \normTwo{N_{j:}}$ for all $j$. If for any of them $\PROD{M_{j:}}{N_{j:}} <  \normTwo{N_{j:}}$ then~\eqref{eq:equality_MN} cannot hold, so
\begin{align}
\label{eq:equality_MN_rows}
\PROD{M_{j:}}{N_{j:}} =  \normTwo{N_{j:}} \quad \text{for all } j.
\end{align}
This directly implies the desired result because for any $j$
\begin{align*}
\normTwo{M_{j:} - \frac{N_{j:}}{\normTwo{N_{j:}}}} ^2 & = \PROD{M_{j:} - \frac{N_{j:}}{ \normTwo{N_{j:}}}}{M_{j:} - \frac{N_{j:}}{\normTwo{N_{j:}}}}\\
& = \normTwo{M_{j:}} - 2 \frac{\PROD{M_{j:}}{N_{j:}}}{\normTwo{N_{j:}}} + 1\\
& = \normTwo{M_{j:}} - 1 = 0. 
\end{align*}
\end{proof}

\subsection{Proof of Lemma~\ref{lemma:cert_common_support}}
\label{proof:cert_common_support}
Any $m$-dimensional measure that is feasible for Problem~\eqref{eq:gTVnormMin} can be written as $X + H$, where $H$ is an $m$-dimensional measure such that $\mathcal{F}_n H_k = 0$ for $1 \leq k \leq m$. This implies that
\begin{align}
\PROD{Q}{H} = \sum_{k=1}^{m} \PROD{Q_k}{H_k} = \sum_{k=1}^{m} \PROD{\mathcal{F}_n^{\ast}C_k}{H_k} = \sum_{k=1}^{m} \PROD{ C_k}{\mathcal{F}_n H_k} = 0.
\end{align}
For any $m$-dimensional measure $\tilde{X}$ composed by $m$ measures $\tilde{X}_k$ with bounded TV norm supported on the unit interval $\mathbb{T}$, we denote its restriction to a subset $S$ of $\mathbb{T}$ by $\tilde{X}_S$ and its restriction to the complement of $S$ by $\tilde{X}_{S^c}$. Since $\normTwo{Q\brac{t}}\leq 1$ for any $t$, we have $\PROD{Q}{X+H_T} \leq \normgTV{X + H_T}$. Similarly, since $\normTwo{Q\brac{t}}<1$ for any $t$ in $T^c$, $\PROD{Q}{H_{T^c}} < \normgTV{H_{T^c}}$. This combined with the fact that the gTV norm is separable and that by construction $\PROD{Q}{X}  = \normgTV{X}$ allows us to conclude that
\begin{align*}
\normgTV{X + H} & = \normgTV{X + H_T} + \normgTV{H_{T^c}}\\
& > \PROD{Q}{X+H_T} +  \PROD{Q}{H_{T^c}}\\
& = \PROD{Q}{X} +  \PROD{Q}{H}\\
& = \normgTV{X},
\end{align*}
as long as $H_{T^c}$ is non zero. Since the measurements are injective on $T$ (the restriction of the measurement operator is a submatrix of a Vandermonde matrix and thus full rank) $H_{T^c}= 0$ implies $H = 0$ and the proof is complete.
\section{Bounds on the interpolation kernel}
\label{sec:bounds_kernel}
\subsection{Upper and lower bounds near the origin in Lemma~\ref{lemma:bounds_near}}
\label{sec:bounds_near}
We use $\fmin$ to denote a lower bound on the cut-off frequency $\fc$. The letter U denotes an upper bound and the letter L denotes a lower bound. The bounds on $K_{\gamma}$ are
\begin{align}
B_{\gamma,0}^{\text{U}} \brac{\tau}& := \max_{A \in \keys{\text{L},\text{U}}^p} \prod_{j=1}^{p}B_{0}^{A_j} \brac{\gamma_j ,\tau}, \notag\\
B_{\gamma,0}^{\text{L}} \brac{\tau} & := \min_{A \in \keys{\text{L},\text{U}}^p} \prod_{j=1}^{p}B_{0}^{A_j} \brac{\gamma_j ,\tau}, \label{eq:B0gamma}
\end{align}
where $\keys{\text{L},\text{U}}^p$ denotes the set of strings of length $p$ where each character is equal to U or L. $B_{\ell}^{\text{U}}$ and $B_{\ell}^{\text{L}}$ for $\ell=0,1,2,3$ are bounds on the Dirichlet kernel $K$ and its derivatives which are defined below. The bounds on $K_{\gamma}^{\brac{1}}$, $K_{\gamma}^{\brac{2}}$ and $K_{\gamma}^{\brac{3}}$, which follow, also depend on these functions.
\begin{align}
B_{\gamma,1}^{\text{U}} \brac{\tau} & := \sum_{i=1}^p \max_{A \in \keys{\text{L},\text{U}}^p} B_{1}^{A_i} \brac{\gamma_i ,\tau} \prod_{j=1,j\neq i}^{p}B_{0}^{A_j} \brac{\gamma_j ,\tau},\notag \\
B_{\gamma,1}^{\text{L}} \brac{\tau} & := \sum_{i=1}^p \min_{A \in \keys{\text{L},\text{U}}^p} B_{1}^{A_i} \brac{\gamma_i ,\tau} \prod_{j=1,j\neq i}^{p}B_{0}^{A_j} \brac{\gamma_j ,\tau} ,\label{eq:B1gamma}
\end{align}
\begin{align}
B_{\gamma,2}^{\text{U}} \brac{\tau} := & \sum_{i=1}^p \max_{A \in \keys{\text{L},\text{U}}^p} B_{2}^{A_i} \brac{\gamma_i ,\tau} \prod_{j=1,j\neq i}^{p}B_{0}^{A_j} \brac{\gamma_j ,\tau}\notag \\
& + \sum_{i=1}^p \sum_{j=1,j\neq i}^{p} \max_{A \in \keys{\text{L},\text{U}}^p} B_{1}^{A_i} \brac{\gamma_i ,\tau} B_{1}^{A_j} \brac{\gamma_j ,\tau} \prod_{k=1,k\notin \keys{i,j} }^{p}B_{0}^{A_k} \brac{\gamma_k ,\tau},\notag \\
B_{\gamma,2}^{\text{L}} \brac{\tau} := & \sum_{i=1}^p \min_{A \in \keys{\text{L},\text{U}}^p} B_{2}^{A_i} \brac{\gamma_i ,\tau} \prod_{j=1,j\neq i}^{p}B_{0}^{A_j} \brac{\gamma_j ,\tau}\notag \\
& + \sum_{i=1}^p \sum_{j=1,j\neq i}^{p} \min_{A \in \keys{\text{L},\text{U}}^p} B_{1}^{A_i} \brac{\gamma_i ,\tau} B_{1}^{A_j} \brac{\gamma_j ,\tau} \prod_{k=1,k\notin \keys{i,j} }^{p}B_{0}^{A_k} \brac{\gamma_k ,\tau} \label{eq:B2gamma},
\end{align}
\begin{align}
B_{\gamma,3}^{\text{U}} \brac{\tau} := & \sum_{i=1}^p \max_{A \in \keys{\text{L},\text{U}}^p} B_{3}^{A_i} \brac{\gamma_i ,\tau} \prod_{j=1,j\neq i}^{p}B_{0}^{A_j} \brac{\gamma_j ,\tau}\notag \\
& + 3\sum_{i=1}^p \sum_{j=1,j\neq i}^{p} \max_{A \in \keys{\text{L},\text{U}}^p} B_{2}^{A_i} \brac{\gamma_i ,\tau} B_{1}^{A_j} \brac{\gamma_j ,\tau} \prod_{k=1,k\notin \keys{i,j} }^{p}B_{0}^{A_k} \brac{\gamma_k ,\tau}\notag \\
& + \sum_{i=1}^p \sum_{j=1,j\neq i}^{p}\sum_{k=1,k\notin \keys{i,j} }^{p}  \max_{A \in \keys{\text{L},\text{U}}^p} B_{1}^{A_i} \brac{\gamma_i ,\tau} B_{1}^{A_j} \brac{\gamma_j ,\tau}B_{1}^{A_k} \brac{\gamma_k ,\tau}  \prod_{l=1,l\notin \keys{i,j,k} }^{p}B_{0}^{A_l} \brac{\gamma_l ,\tau},\notag \\
B_{\gamma,3}^{\text{L}} \brac{\tau} := & \sum_{i=1}^p \min_{A \in \keys{\text{L},\text{U}}^p} B_{3}^{A_i} \brac{\gamma_i ,\tau} \prod_{j=1,j\neq i}^{p}B_{0}^{A_j} \brac{\gamma_j ,\tau}\notag \\
& + 3\sum_{i=1}^p \sum_{j=1,j\neq i}^{p} \min_{A \in \keys{\text{L},\text{U}}^p} B_{2}^{A_i} \brac{\gamma_i ,\tau} B_{1}^{A_j} \brac{\gamma_j ,\tau} \prod_{k=1,k\notin \keys{i,j} }^{p}B_{0}^{A_k} \brac{\gamma_k ,\tau}\notag \\
& + \sum_{i=1}^p \sum_{j=1,j\neq i}^{p}\sum_{k=1,k\notin \keys{i,j} }^{p}  \min_{A \in \keys{\text{L},\text{U}}^p} B_{1}^{A_i} \brac{\gamma_i ,\tau} B_{1}^{A_j} \brac{\gamma_j ,\tau}B_{1}^{A_k} \brac{\gamma_k ,\tau}  \prod_{l=1,l\notin \keys{i,j,k} }^{p}B_{0}^{A_l} \brac{\gamma_l ,\tau}. \label{eq:B3gamma}
\end{align}
These bounds are plotted for $\gamma=\sqbr{\gammaOne,\gammaTwo,\gammaThree}^T$ in Figure~\ref{fig:kernel_gamma_B_bound}.

We separate the bounds on $K$, $B_{0}^{U}$ and $B_{0}^{L}$, into two intervals.
\begin{align}
B_{0}^{\text{U}} \brac{\gamma_0,\tau} := \begin{cases}
B_{\text{near},0}^{\text{U}} \brac{\gamma_0,\tau} \quad &\text{if } 0 \leq \abs{\tau}  \leq \tau_0\brac{\gamma_0}\\
B_{\text{far},0}^{\text{U}} \brac{\gamma_0,\tau} \quad &\text{if } \abs{\tau}  > \tau_0\brac{\gamma_0},
\end{cases}\label{eq:B0}
\end{align}
and do exactly the same for $B_{0}^{\text{L}}$. The intervals depend on the value of $\gamma_0$. In particular:
$\tau_0\brac{\gammaOne}=0.9112$, $\tau_0\brac{\gammaTwo}=0.6615$,
$\tau_0\brac{\gammaThree}=0.5401$.

The bounds near the origin are of the form
\begin{align}
B_{\text{near},0}^{\text{U}} \brac{\gamma_0, \tau} & : = \frac{1-\frac{2\pi^2  \gamma_0^2 \tau^2}{3}+\frac{2\pi^4 \gamma_0^4 \tau^4}{15} \brac{1+\frac{1}{2 \gamma_0 \fmin}}^4 -\frac{4\pi^6 \gamma_0^6 \tau^6}{315} +\frac{2\pi^8 \gamma_0^8 \tau^8}{2835}\brac{1+\frac{1}{2 \gamma_0 \fmin}}^8 }{\brac{1-\frac{2 \pi^2 \gamma_0^2 \tau^2}{3} +\frac{2\pi^4 \gamma_0^4 \tau^4}{15}  -\frac{4\pi^6 \gamma_0^6 \tau^6}{315}}\brac{1- \frac{\pi^2  \tau^2}{6  \fmin^2}}} \frac{\sin \brac{2 \pi \gamma_0  \tau }}{2 \pi \gamma_0  \tau }, \label{eq:Bnear0U} \\
B_{\text{near},0}^{\text{L}} \brac{\gamma_0,\tau} & : = \frac{1-\frac{2\pi^2 \gamma_0^2 \tau^2}{3}\brac{1+\frac{1}{2 \gamma_0 \fmin}}^2+\frac{2\pi^4 \gamma_0^4 \tau^4}{15}  -\frac{4\pi^6 \gamma_0^6 \tau^6}{315}\brac{1+\frac{1}{2 \gamma_0 \fmin}}^6 }{1-\frac{2\pi^2 \gamma_0^2 \tau^2}{3}+\frac{2\pi^4 \gamma_0^4 \tau^4}{15}  -\frac{4\pi^6 \gamma_0^6 \tau^6}{315} +\frac{2\pi^8 \gamma_0^8 \tau^8}{2835}}  \frac{\sin \brac{2 \pi \gamma_0  \tau }}{2 \pi \gamma_0  \tau }. \label{eq:Bnear0L}
\end{align}
Farther from the origin, 
\begin{align}
B_{\text{far},0}^{\text{U}} \brac{\gamma_0,\tau} & : =\frac{\sin \brac{2 \pi \gamma_0  \tau  }}{2 \pi \gamma_0  \tau  } \frac{\brac{1- \mathbbm{1}_{\sin \brac{2 \pi \gamma_0 \tau} <0} \frac{\pi^2  \tau^2}{2 \fmin^2}} }{\brac{1+\mathbbm{1}_{\sin \brac{2 \pi \gamma_0 \tau} < 0} \frac{1}{2 \gamma_0 \fmin}}  } + \mathbbm{1}_{\cos \brac{2 \pi \gamma_0 \tau} \geq 0}\frac{\cos \brac{2 \pi \gamma_0  \tau  }}{1 +2 \gamma_0 \fmin } , \label{eq:Bfar0U} \\
B_{\text{far},0}^{\text{L}} \brac{\gamma_0,\tau} & : =\frac{\sin \brac{2 \pi \gamma_0  \tau  }}{2 \pi \gamma_0  \tau  } \frac{\brac{1- \mathbbm{1}_{\sin \brac{2 \pi \gamma_0 \tau} \geq 0} \frac{\pi^2  \tau^2}{2 \fmin^2}} }{\brac{1+\mathbbm{1}_{\sin \brac{2 \pi \gamma_0 \tau} \geq 0} \frac{1}{2 \gamma_0 \fmin}}  } + \mathbbm{1}_{\cos \brac{2 \pi \gamma_0 \tau} < 0}\frac{\cos \brac{2 \pi \gamma_0  \tau  }}{1+2 \gamma_0 \fmin }, \label{eq:Bfar0L} 
\end{align}
where $\mathbbm{1}_{S}$ is an indicator function that is equal to one when statement $S$ holds and zero otherwise.

The bounds on $K^{\brac{1}}$, $K^{\brac{2}}$ and $K^{\brac{3}}$ are
\begin{align}
B_{1}^{\text{U}} \brac{\gamma_0,\tau} &: = \frac{  \fc \brac{ \cos \brac{2 \pi \gamma_0 \tau} -  B^{\text{L}}_0 \brac{\gamma_0,\tau}} }{ \tau} \brac{ 1 - \mathbbm{1}_{\cos \brac{2 \pi \gamma_0 \tau} \leq  B^{\text{L}}_0 \brac{\gamma_0,\tau}}\frac{\pi^2 \tau^2}{2 \fmin^2} } \notag\\
& \quad - \mathbbm{1}_{\sin \brac{2 \pi \gamma_0 \tau} < 0} \frac{ \pi \fc \sin \brac{2 \pi \gamma_0 \tau} }{\fmin}, \label{eq:B1U}\\
B_{1}^{\text{L}} \brac{\gamma_0,\tau} &: = \frac{  \fc \brac{ \cos \brac{2 \pi \gamma_0 \tau} -  B^{\text{U}}_0 \brac{\gamma_0,\tau} }}{ \tau} \brac{ 1 - \mathbbm{1}_{\cos \brac{2 \pi \gamma_0 \tau} \geq  B^{\text{U}}_0 \brac{\gamma_0,\tau}}\frac{\pi^2 \tau^2}{2 \fmin^2} }\notag\\
& \quad - \mathbbm{1}_{\sin \brac{2 \pi \gamma_0 \tau} \geq 0} \frac{ \pi  \fc \sin \brac{2 \pi \gamma_0 \tau}}{\fmin}, \label{eq:B1L}
\end{align}
\begin{align}
B_{2}^{\text{U}} \brac{\gamma_0,\tau} &: = \frac{  2  \fc^2 \brac{B^{\text{U}}_0 \brac{\gamma_0,\tau}-\cos \brac{2 \pi \gamma_0 \tau} } }{\tau^2 } \brac{ 1 - \mathbbm{1}_{\cos \brac{2 \pi \gamma_0 \tau} \geq  B^{\text{U}}_0 \brac{\gamma_0,\tau}}\frac{\pi^2 \tau^2}{2 \fmin^2} }^2 \notag \\
&\quad  -4\pi^2 \gamma_0^2  \fc^2 B^{\text{L}}_0 \brac{\gamma_0,\tau} +  \mathbbm{1}_{h_{2}^{\text{U}}\brac{\gamma_0,\tau}\geq 0}   \fc^2 \, h_{2}^{\text{U}}\brac{\gamma_0,\tau}, \label{eq:B2U} \\
B_{2}^{\text{L}} \brac{\gamma_0,\tau} &: = \frac{ 2 \fc^2 \brac{B^{\text{L}}_0 \brac{\gamma_0,\tau}-\cos \brac{2 \pi \gamma_0 \tau} } }{\tau^2 } \brac{ 1 - \mathbbm{1}_{\cos \brac{2 \pi \gamma_0 \tau} <  B^{\text{L}}_0 \brac{\gamma_0,\tau}} \frac{\pi^2 \tau^2}{2 \fmin^2} }^2 \notag\\
&\quad  -4\pi^2 \gamma_0^2 \fc^2 B^{\text{U}}_0 \brac{\gamma_0,\tau} +  \mathbbm{1}_{h_{2}^{\text{L}}\brac{\gamma_0,\tau}< 0}  \fc^2 \, h_{2}^{\text{L}} \brac{\gamma_0,\tau},\label{eq:B2L}
\end{align}
where
\begin{align*}
h_{2}^{\text{U}}\brac{\gamma_0,\tau} & = \frac{4\pi^2\gamma_0}{\fmin}  \brac{  \brac{ 1 - \mathbbm{1}_{\sin \brac{2 \pi \gamma_0 \tau} <0}\frac{\pi^2 \tau^2}{2  \fmin^2} } \frac{\sin \brac{2 \pi \gamma_0 \tau}}{2\pi \gamma_0 \tau} - B^{\text{L}}_0 \brac{\gamma_0,\tau}},\\
h_{2}^{\text{L}}\brac{\gamma_0,\tau} & = \frac{4\pi^2\gamma_0}{\fmin}  \brac{  \brac{ 1 - \mathbbm{1}_{\sin \brac{2 \pi \gamma_0 \tau} \geq 0}\frac{\pi^2 \tau^2}{2 \fmin^2} } \frac{\sin \brac{2 \pi \gamma_0 \tau}}{2\pi \gamma_0 \tau} - B^{\text{U}}_0 \brac{\gamma_0,\tau}},
\end{align*}
\begin{align}
B_{3}^{\text{U}}\brac{\gamma_0,\tau} & :=  -4 \pi^2 \gamma_0^2 \fc^2 B^{\text{L}}_1\brac{\gamma_0,\tau}\brac{1 +\mathbbm{1}_{B^{\text{L}}_1\brac{\gamma_0,\tau}<0} \brac{\frac{1}{ \gamma_0 \fmin}-\frac{1}{2 \gamma_0^2\fmin^2}}} \notag \\
& \quad +\frac{2  \fc \, h_{3}^{\text{U}}\brac{\gamma_0,\tau} }{\tau} \brac{1-\mathbbm{1}_{h_{3}^{\text{U}}\brac{\gamma_0,\tau}< 0} \frac{\pi^2 \tau^2}{2  \fmin^2}}, \label{eq:B3U}
\end{align}
\begin{align}
B_{ 3}^{\text{L}}\brac{\gamma_0,\tau} & :=  -4 \pi^2 \gamma_0^2 \fc^2 B^{\text{U}}_1\brac{\gamma_0,\tau}\brac{1 +\mathbbm{1}_{B^{\text{U}}_1\brac{\gamma_0,\tau}\geq 0} \brac{\frac{1}{ \gamma_0 \fmin}-\frac{1}{2 \gamma_0^2 \fmin^2}} } \notag  \\
& \quad +\frac{2  \fc \, h_{3}^{\text{L}} \brac{\gamma_0,\tau} }{\tau}\brac{1-\mathbbm{1}_{h_{3}^{\text{L}}\brac{\gamma_0,\tau}> 0}\frac{\pi^2 \tau^2}{2  \fmin^2}}, \label{eq:B3L}
\end{align}
where the definition of $h_{3}^{\text{U}}\brac{\gamma_0,\tau}$ and $h_{3}^{\text{L}}\brac{\gamma_0,\tau}$ is separated into two intervals:
\begin{align*}
h_{3}^{U}\brac{\gamma_0,\tau} : = \begin{cases}
h_{\text{near},3,a}^{U} \brac{\gamma_0,\tau} \mathbbm{1}_{ h_{\text{near},3,a}^{U} \brac{\gamma_0,\tau} \geq 0 }-h_{\text{near},3,b}^{U} \brac{\gamma_0,\tau} \quad &\text{if } 0 \leq \abs{\tau}  \leq \tau_3\brac{\gamma_0}, \\
h_{\text{far},3}^{U} \brac{\gamma_0,\tau} \quad &\text{if } \abs{\tau}  > \tau_3\brac{\gamma_0},
\end{cases}\\
h_{3}^{L}\brac{\gamma_0,\tau} : = \begin{cases}
h_{\text{near},3,a}^{L} \brac{\gamma_0,\tau} \mathbbm{1}_{ h_{\text{near},3,a}^{L} \brac{\gamma_0,\tau} < 0}-h_{\text{near},3,b}^{L} \brac{\gamma_0,\tau} \quad &\text{if } 0 \leq \abs{\tau}  \leq \tau_3\brac{\gamma_0}, \\
h_{\text{far},3}^{L} \brac{\gamma_0,\tau} \quad &\text{if } \abs{\tau}  > \tau_3\brac{\gamma_0}.
\end{cases}
\end{align*}
In particular, $\tau_3\brac{\gammaOne}=0.4614$, $\tau_3\brac{\gammaTwo}=0.2346$ and $\tau_3\brac{\gammaThree}=0.3051$. Finally, we have  
\begin{align}
h_{ \text{near},3,a}^{\text{U}}\brac{\gamma_0,\tau} & : = \frac{4  \pi^2 \gamma_0 \fc^2}{ \fmin }\brac{ B^{\text{U}}\brac{\gamma_0 ,\tau} - \frac{\sin \brac{2 \pi \gamma_0 \tau }}{2 \pi \gamma_0 \tau} \brac{1 -\mathbbm{1}_{ \sin \brac{2 \pi \gamma_0 \tau}>0} \frac{\pi^2 \tau^2}{2 \fmineq^2} } } , \notag \\
h_{ \text{near},3,a}^{\text{L}}\brac{\gamma_0,\tau} & := \frac{4  \pi^2 \gamma_0 \fc^2}{ \fmin } \brac{ B^{\text{L}}\brac{\gamma_0,\tau} - \frac{\sin \brac{2 \pi \gamma_0 \tau}}{2 \pi \gamma_0 \tau } \brac{1 -\mathbbm{1}_{ \sin \brac{2 \pi \gamma_0 \tau} < 0} \frac{\pi^2 \tau^2}{2 \fmin^2} } }, \notag \\
 h_{ \text{near},3,b}^{\text{U}}\brac{\gamma_0,\tau} & : =  
 \frac{\fc^2\brac{31.3 \pi^4  \gamma_0^4  \tau^2 - 16 \pi^6 \gamma_0^6  \tau^4 \brac{1+\frac{2}{\gamma_0 \fmin}}} \brac{1- \frac{\pi^2  \tau^2}{2 \fmin^2}}^2  }{15\brac{2+\frac{1}{\gamma_0 \fmin}} }, \notag\\
h_{ \text{near},3,b}^{\text{L}}\brac{\gamma_0,\tau} & := 
\frac{2 \pi^4  \gamma_0^4 \fc^2 \tau^2\brac{8.17+\frac{20}{\gamma_0 \fmin}}}{15 \brac{1-\frac{ \pi^2 \tau^2}{6\fmin^2}}^2}, \notag\\
h_{ \text{far},3}^{\text{U}}\brac{\gamma_0, \tau}& : =  \frac{\fc B^{\text{U}}_1 \brac{\gamma_0, \tau} }{ \tau}\brac{1-\mathbbm{1}_{B^{\text{U}}_1\brac{\gamma_0, \tau}< 0}\frac{\pi^2  \tau^2}{2  \fmin^2}}  - B^{\text{L}}_2\brac{\gamma_0, \tau} , \notag\\
h_{ \text{far},3}^{\text{L}}\brac{\gamma_0, \tau}& : = \frac{\fc B^{\text{L}}_1 \brac{\gamma_0, \tau} }{ \tau }\brac{1-\mathbbm{1}_{B^{\text{L}}_1\brac{\gamma_0, \tau}> 0}\frac{\pi^2 \tau^2}{2  \fmin^2}}  - B^{\text{U}}_2\brac{\gamma_0, \tau} .  \notag\\
\end{align}

\begin{figure}
\begin{tabular}{ >{\centering\arraybackslash}m{0.45\linewidth} >{\centering\arraybackslash}m{0.45\linewidth} }
\begin{tikzpicture}[scale=1,spy using outlines=
	{circle, magnification=10, connect spies}]
\begin{axis}[title = $\ell{=}0$,xlabel= $\tau$]
\addplot[blue,line width=0.3pt] file {data_B_gamma_0_U.dat};
\addplot[red,line width=0.3pt] file {data_B_gamma_0_L.dat};
\addplot+[ycomb,only marks,orange,mark size =0.1pt,mark=o] file {data_K_gamma_0_1.dat};
\addplot+[ycomb,only marks,green,mark size =0.1pt,mark=diamond] file {data_K_gamma_0_2.dat};
\addplot+[ycomb,only marks,black,mark size =0.5pt,mark=x,line width = 0.1pt] file {data_K_gamma_0_3.dat};
\coordinate (spypoint) at (axis cs:1.1,0.02);
\coordinate (magnifyglass) at (axis cs:3,0.6);
\end{axis}
\spy [ size=3cm] on (spypoint)
   in node[fill=white] at (magnifyglass);
\end{tikzpicture}  
& 
\begin{tikzpicture}[scale=1,spy using outlines=
	{circle, magnification=10, connect spies}]
\begin{axis}[title = $\ell {=} 1$,xlabel=$\tau$]
\addplot[blue,line width=0.3pt] file {data_B_gamma_1_U.dat};
\addplot[red,line width=0.3pt] file {data_B_gamma_1_L.dat};
\addplot+[ycomb,only marks,orange,mark size =0.1pt,mark=o] file {data_K_gamma_1_1.dat};
\addplot+[ycomb,only marks,green,mark size =0.1pt,mark=diamond] file {data_K_gamma_1_2.dat};
\addplot+[ycomb,only marks,black,mark size =0.5pt,mark=x,line width = 0.1pt] file {data_K_gamma_1_3.dat};
\coordinate (spypoint) at (axis cs:0.495,-1.37);
\coordinate (magnifyglass) at (axis cs:3,-0.75);
\end{axis}
\spy [ size=3cm] on (spypoint)
   in node[fill=white] at (magnifyglass);
\end{tikzpicture}  
\\
\begin{tikzpicture}[scale=1,spy using outlines=
	{circle, magnification=10, connect spies}]
\begin{axis}[title = $\ell {=} 2$,xlabel=$\tau$]
\addplot[blue,line width=0.3pt] file {data_B_gamma_2_U.dat};
\addplot[red,line width=0.3pt] file {data_B_gamma_2_L.dat};
\addplot+[ycomb,only marks,orange,mark size =0.1pt,mark=o] file {data_K_gamma_2_1.dat};
\addplot+[ycomb,only marks,green,mark size =0.1pt,mark=diamond] file {data_K_gamma_2_2.dat};
\addplot+[ycomb,only marks,black,mark size =0.5pt,mark=x,line width = 0.1pt] file {data_K_gamma_2_3.dat};
\coordinate (spypoint) at (axis cs:0.86,2.39);
\coordinate (magnifyglass) at (axis cs:3,-2.6);
\end{axis}
\spy [ size=3cm] on (spypoint)
   in node[fill=white] at (magnifyglass);
\end{tikzpicture}  
&
\begin{tikzpicture}[scale=1,spy using outlines=
	{circle, magnification=10, connect spies}]
\begin{axis}[title = $\ell {=} 3$,xlabel=$\tau$,legend cell align=left,legend columns=3,legend style={draw=none,/tikz/every even column/.append style={column sep=0.4cm}},
legend to name=named]
\addplot[blue,line width=0.3pt,forget plot] file {data_B_gamma_3_U.dat};
\addplot[red,line width=0.3pt,forget plot] file {data_B_gamma_3_L.dat};
\addplot+[ycomb,only marks,orange,mark size =0.1pt,mark=square,forget plot] file {data_K_gamma_3_1.dat};
\addplot+[ycomb,only marks,green,mark size =0.1pt,mark=diamond,forget plot] file {data_K_gamma_3_2.dat};
\addplot+[ycomb,only marks,black,mark size =0.5pt,mark=x,line width = 0.1pt,forget plot] file {data_K_gamma_3_3.dat};
\coordinate (spypoint) at (axis cs:0.4,13.0);
\coordinate (magnifyglass) at (axis cs:3.25,7.5);
\addlegendimage{only marks,orange,mark size =3pt,mark=square*}
\addlegendentry{\;\;$ K_{\gamma}^{\brac{\ell}}\brac{\fc \tau} /\fc^{\ell}  \;\; (\fc \, {=} \, 10^3)$}
\addlegendimage{only marks,green,mark size =4pt,mark=diamond*}
\addlegendentry{\;\;$ K_{\gamma}^{\brac{\ell}}\brac{\fc \tau}  /\fc^{\ell}  \;\;  (\fc{=} \, 5 \: 10^3)$}
\addlegendimage{only marks,black,mark size =4pt,mark=x,line width=1pt}
\addlegendentry{\;\;$ K_{\gamma}^{\brac{\ell}}\brac{\fc \tau} /\fc^{\ell}  \;\;  (\fc \, {=} \, 10^4)$}
\addlegendimage{blue,line width=3pt}
\addlegendentry{$ \quad B_{\gamma,\ell}^{\text{U}} \brac{\tau} /\fc^{\ell} $}
\addlegendimage{red,line width=3pt}
\addlegendentry{$ \quad B_{\gamma,\ell}^{\text{L}} \brac{\tau} /\fc^{\ell}$}
\end{axis}
\spy [ size=3cm] on (spypoint)
   in node[fill=white] at (magnifyglass);
\end{tikzpicture}  
\end{tabular}
\begin{center}
\ref{named}
\end{center}
\caption{Upper and lower bounds on $K_{\gamma}$ and its derivatives.}
\label{fig:kernel_gamma_B_bound}
\end{figure}
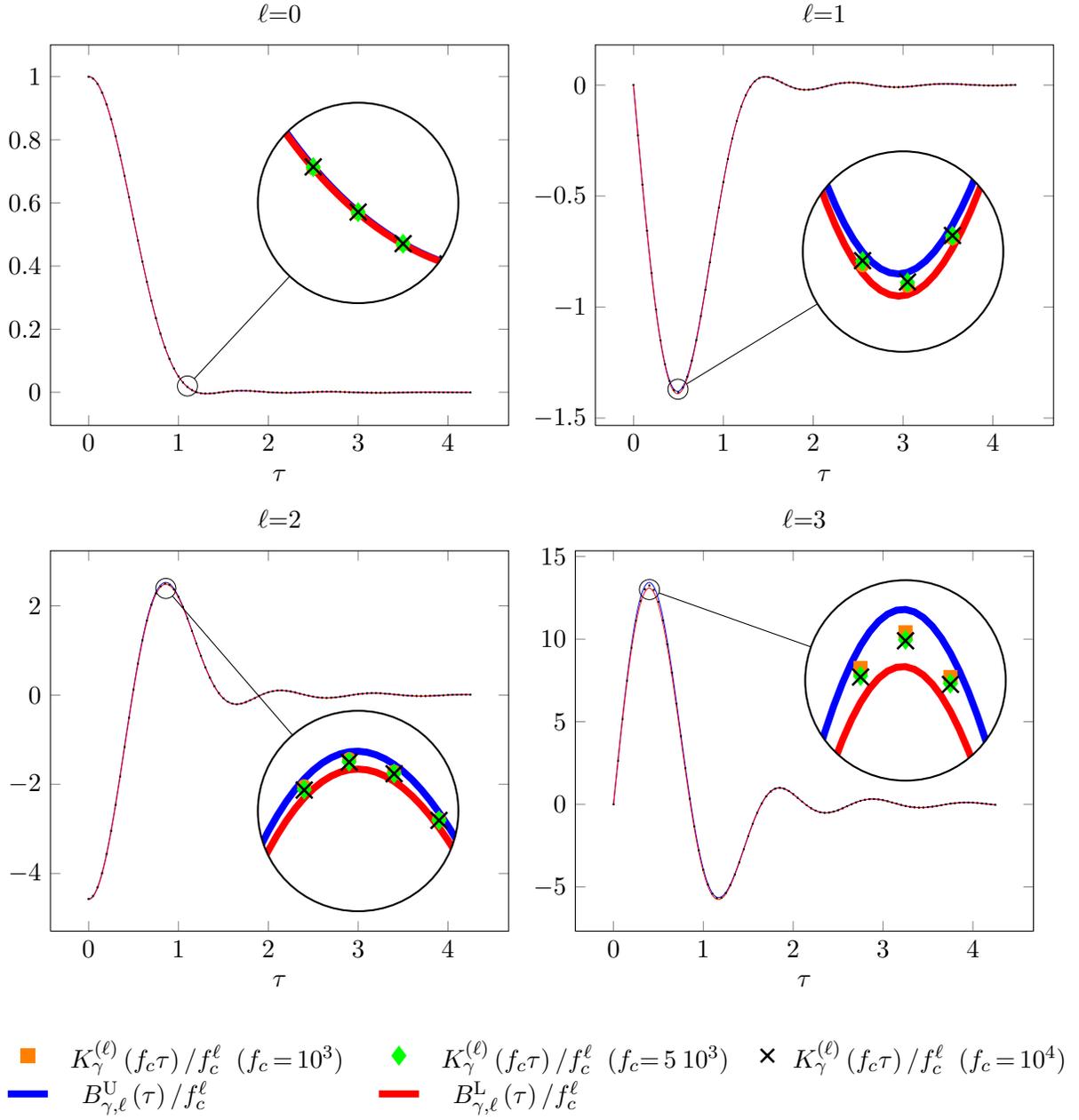

\subsection{Bounds on the tail of the kernel in Lemma~\ref{lemma:tail_bounds}}
\label{sec:tail_bounds}
 The bounds are plotted for $\gamma=\sqbr{\gammaOne,\gammaTwo,\gammaThree}^T$ in Figure~\ref{fig:kernel_gamma_b_bound}. 
\begin{align}
b_{\gamma,0} \brac{\tau}& := \prod_{j=1}^{p}b_{0} \brac{\gamma_j , \tau}, \notag \\
b_{\gamma,1} \brac{\tau} & := \sum_{i=1}^p  b_{1} \brac{\gamma_i ,\tau} \prod_{j=1,j\neq i}^{p}b_{0} \brac{\gamma_j ,\tau}, \notag \\
b_{\gamma,2} \brac{\tau} & := \sum_{i=1}^p b_{2} \brac{\gamma_i ,\tau} \prod_{j=1,j\neq i}^{p}b_{0} \brac{\gamma_j ,\tau} + \sum_{i=1}^p \sum_{j=1,j\neq i}^{p} b_{1} \brac{\gamma_i ,\tau} b_{1} \brac{\gamma_j ,\tau} \prod_{k=1,k\notin \keys{i,j} }^{p}b_{0} \brac{\gamma_k ,\tau}, \notag \\
b_{\gamma,3} \brac{\tau} & :=  \sum_{i=1}^p  b_{3} \brac{\gamma_i ,\tau} \prod_{j=1,j\neq i}^{p}b_{0} \brac{\gamma_j ,\tau} + 3\sum_{i=1}^p \sum_{j=1,j\neq i}^{p} b_{2} \brac{\gamma_i ,\tau} b_{1} \brac{\gamma_j ,\tau} \prod_{k=1,k\notin \keys{i,j} }^{p}b_{0} \brac{\gamma_k ,\tau} \notag\\
& \quad + \sum_{i=1}^p \sum_{j=1,j\neq i}^{p}\sum_{k=1,k\notin \keys{i,j} }^{p}  b_{1}  \brac{\gamma_i ,\tau} b_{1} \brac{\gamma_j ,\tau} b_{1} \brac{\gamma_k ,\tau}  \prod_{l=1,l\notin \keys{i,j} }^{p}b_{0} \brac{\gamma_l ,\tau}, \label{eq:b_bounds}
\end{align}
where
\begin{align*}
b_0 \brac{\gamma_0,\tau} &  := \frac{1 }{2 \pi \gamma_0 \tau  \brac{1-\frac{ \pi^2 \tau^2}{6\fmin^2} }} ,  \\
b_1 \brac{\gamma_0,\tau} &  :=  \frac{\fc \brac{1 +b_0\brac{\gamma_0,\tau}}}{ \tau \brac{1-\frac{ \pi^2  \tau^2}{6  \fmin^2} }},  \\
b_2 \brac{\gamma_0,\tau} &  := 4 \pi^2 \gamma_0^2 \fc^2 b_0\brac{\gamma_0,\tau}\brac{1+\frac{1}{ \gamma_0 \fmin}}+\frac{2 \fc b_1\brac{\gamma_0,\tau} }{ \tau} ,  \\
b_3 \brac{\gamma_0,\tau} &  := 4 \pi^2 \gamma_0^2 \fc^2  b_1 \brac{\gamma_0,\tau} \brac{1+\frac{1}{ \gamma_0 \fmin} } + \frac{2 \fc  }{ \tau} \brac{ b_2 \brac{\gamma_0, \tau}  + \frac{ \fc b_1 \brac{\gamma_0,\tau} }{\tau}} .
\end{align*}
\begin{figure}
\begin{tabular}{ >{\centering\arraybackslash}m{0.45\linewidth} >{\centering\arraybackslash}m{0.45\linewidth} }
\includegraphics{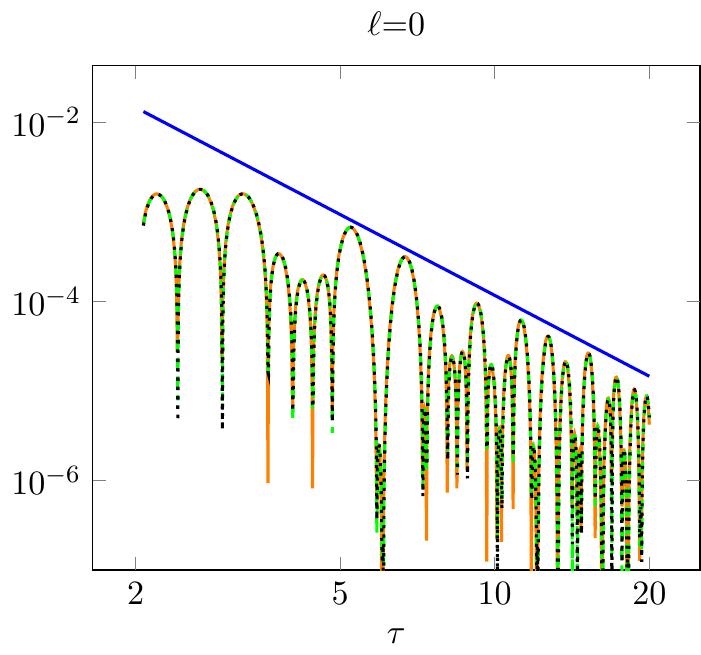}
& 
\includegraphics{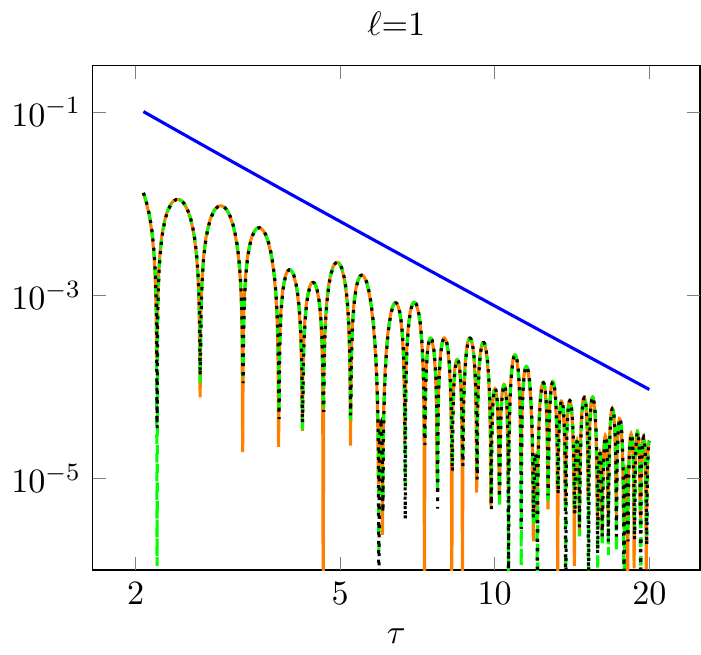}
\\
\includegraphics{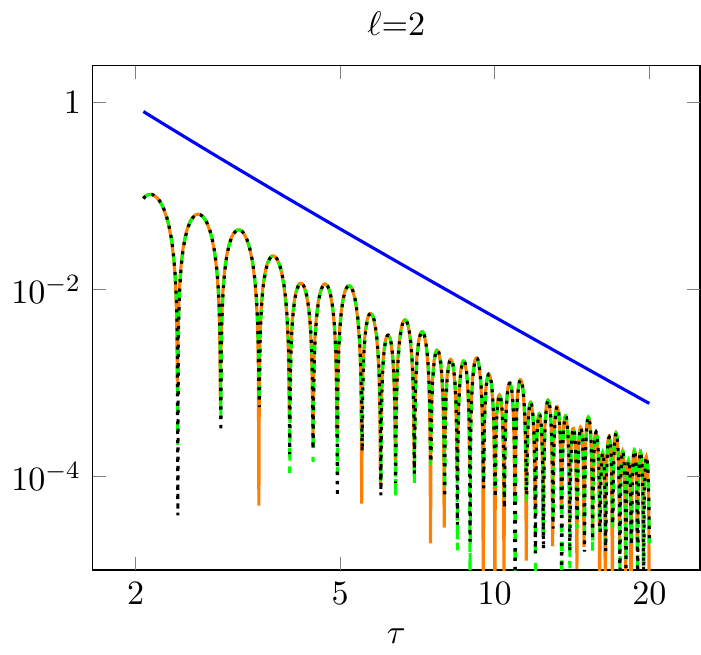}
&
\begin{tikzpicture}[scale=0.9]
\begin{loglogaxis}[title = $\ell {=} 3$,xlabel=$\tau$,legend cell align=left,legend columns=2,legend style={draw=none,/tikz/every even column/.append style={column sep=0.4cm}},
legend to name=named2, ymin=1e-4]
\addplot[blue,line width=1pt,forget plot] file {data_b_gamma_3.dat};
\addplot[orange,line width=1pt,forget plot] file {data_b_K_gamma_3_1.dat};
\addplot[green,dashed,line width=1pt,forget plot] file {data_b_K_gamma_3_2.dat};
\addplot[black,dotted,line width=1pt,forget plot] file {data_b_K_gamma_3_3.dat};
\addlegendimage{orange,line width=3pt}
\addlegendentry{\;\;$\abs{K_{\gamma}^{\brac{\ell}}\brac{\fc \tau}} /\fc^{\ell} \;\; (\fc \, {=} \, 10^3)$}
\addlegendimage{green,dashed,line width=3pt}
\addlegendentry{\;\;$\abs{K_{\gamma}^{\brac{\ell}}\brac{\fc \tau}} /\fc^{\ell} \;\;  (\fc{=} \, 5 \: 10^3)$}
\addlegendimage{black,dotted,line width=3pt}
\addlegendentry{\;\;$\abs{K_{\gamma}^{\brac{\ell}}\brac{\fc \tau}} /\fc^{\ell}  \;\;  (\fc \, {=} \, 10^4)$}
\addlegendimage{blue,line width=3pt}
\addlegendentry{\;\;$ \quad b_{\gamma,\ell} \brac{\tau} /\fc^{\ell} $}
\end{loglogaxis}
\end{tikzpicture}  
\end{tabular}
\begin{center}
\ref{named2}
\end{center}
\caption{Non-increasing upper bounds on the absolute value of $K_{\gamma}$ and its derivatives.}
\label{fig:kernel_gamma_b_bound}
\end{figure}

\section{Proof of the bounds on the interpolation kernel}
\subsection{Proof of Lemma~\ref{lemma:bounds_near}}
\label{proof:bounds_near}
We assume that $t$ is between 0 and 0.5. The bounds for $t$ between -0.5 and 0 follow by symmetry ($K$ and $K^{\brac{2}}$ are even, $K^{\brac{1}}$ and $K^{\brac{3}}$ are odd). The following local bounds, which are derived from Taylor series expansions around the origin, will be used often during the proof. If $u \geq 0$
\begin{align} 
u - \frac{u^3}{6}  & \leq \sin \brac{u }  \leq u ,\label{eq:ineq_sin_1}\\
   u - \frac{u^3}{6} + \frac{u^5}{120} - \frac{u^7}{5040} & \leq \sin \brac{u }  \leq u - \frac{u^3}{6} + \frac{u^5}{120} - \frac{u^7}{5040} +  \frac{u^9}{362880}  ,\label{eq:ineq_sin_2}\\
1 - \frac{u^2}{2}  & \leq \cos \brac{u }  \leq 1 , \label{eq:ineq_cos_1}\\
 1 - \frac{u^2}{2} + \frac{u^4}{24} - \frac{u^6}{720}  &\leq \cos \brac{u }  \leq 1 - \frac{u^2}{2} + \frac{u^4}{24}, \label{eq:ineq_cos_2}
 \end{align}
 and if $0 \leq u \leq \sqrt{2}$
\begin{align} 
u  & \leq \tan \brac{u} \leq \frac{u }{1-\frac{u^2 }{2}}, \label{eq:ineq_tan}\\
u^2  & \leq \sin \brac{u} \tan \brac{u} \leq u^2 +\frac{u^4}{2-u^2}, \label{eq:ineq_sintan}\\ 
u^3 & \leq \sin \brac{u} \tan^2 \brac{u} \leq u^3 +\frac{u^5\brac{4-u^2}}{\brac{2-u^2}^2}. \label{eq:ineq_sinsqtan}
\end{align}

Let us fix a value $0<\gamma_0<1$ and set $f_0=\gamma_0 \, f_c$ to alleviate notation. Henceforth we assume that $\fc \geq \fmin$. By~\eqref{eq:ineq_sin_1} and~\eqref{eq:ineq_tan}
\begin{align}
& \frac{1}{ \pi t }   \leq \frac{1}{\sin \brac{\pi t} }  \leq \frac{1}{ \pi t \brac{1- \frac{\pi^2 f_0^2 t^2}{6 \gamma_0^2\fmin^2}} }, \label{eq:ineq_1oversin}\\
&  \frac{1- \frac{\pi^2 f_0^2 t^2}{2 \gamma_0^2\fmin^2}}{ \pi t }  \leq \frac{1}{\tan \brac{\pi t }} \leq \frac{1 }{\pi t}. \label{eq:ineq_1overtan} 
\end{align}
We begin by deriving bounds on $K\brac{\gamma_0 \,\fc, t}$ near the origin. By~\eqref{eq:ineq_sin_2}
\begin{align*}
\frac{\sin \brac{\brac{2 f_0+1} \pi t}}{2 f_0+1} \frac{2f_0}{\sin \brac{2 \pi f_0 t}} & \leq \frac{1-\frac{\pi^2\brac{2 f_0 + 1}^2 t^2}{6}+\frac{\pi^4\brac{2 f_0 + 1}^4 t^4}{120} -\frac{\pi^6\brac{2 f_0 + 1}^6 t^6}{5040}+\frac{\pi^8\brac{2 f_0 + 1}^8 t^8}{362880}}{1-\frac{2\pi^2f_0^2 t^2}{3}+\frac{2\pi^4f_0^4 t^4}{15}-\frac{4\pi^6f_0^6 t^6}{315}},\\
\frac{\sin \brac{\brac{2 f_0+1} \pi t}}{2 f_0+1} \frac{2f_0}{\sin \brac{2 \pi f_0 t}} & \geq \frac{1-\frac{\pi^2\brac{2 f_0 + 1}^2 t^2}{6}+\frac{\pi^4\brac{2 f_0 + 1}^4 t^4}{120} -\frac{\pi^6\brac{2 f_0 + 1}^6 t^6}{5040}}{1-\frac{2\pi^2f_0^2 t^2}{3}+\frac{2\pi^4f_0^4 t^4}{15}-\frac{4\pi^6f_0^6 t^6}{315}+\frac{2\pi^8f_0^8 t^8}{2835}}.
\end{align*}
Combining this with~\eqref{eq:ineq_1oversin} establishes
\begin{align}
B_{\text{near},0}^{\text{L}} \brac{\gamma_0,\fct} & \leq K(\gamma_0 \,\fc, t)  \leq B_{\text{near},0}^{\text{U}} \brac{\gamma_0,\fct}. \label{eq:B0_bound_near}
\end{align}
To prove
\begin{align}
B_{\text{far},0}^{\text{L}} \brac{\gamma_0,\fct} & \leq K(\gamma_0 \,\fc, t)  \leq B_{\text{far},0}^{\text{U}} \brac{\gamma_0,\fct}, \label{eq:B0_bound_far}
\end{align}
we express the kernel in the following form
\begin{align*}
K\brac{f_0,t}  & = \frac{\sin \brac{2 \pi f_0 t} \cos \brac{\pi t}}{\brac{2 f_0+1} \sin\brac{\pi t}} + \frac{\cos \brac{2 \pi f_0 t}}{\brac{2 f_0+1}},
\end{align*}
and apply~\eqref{eq:ineq_cos_1} and~\eqref{eq:ineq_1overtan}.

Some basic trigonometric identities yield
\begin{align}
K^{(1)}\brac{f_0, t} &= \frac{ \pi }{\sin \brac{\pi t}} \brac{ \cos \brac{\brac{2f_0 + 1} \pi t} - \frac{\sin \brac{\brac{2f_0 + 1} \pi t} \cos \brac{\pi t}}{\brac{2f_0 + 1}\sin \brac{\pi t}}}  \notag \\
  & = \frac{ \pi }{\sin \brac{\pi t}} \brac{ \cos \brac{\brac{2f_0 + 1} \pi t} - K\brac{f_0, t} \cos \brac{\pi t} } \label{eq:ineqK1}\\
   & = \frac{ \pi }{\sin \brac{\pi t}} \brac{ \cos \brac{2 \pi f_0 t}\cos \brac{\pi t}-\sin \brac{2 \pi f_0 t}\sin \brac{\pi t} - K\brac{f_0, t} \cos \brac{\pi t} } \label{eq:ineqK1_2} \\
 & = \pi\brac{ \frac{  \cos \brac{2 \pi f_0 t} -  K\brac{f_0, t} }{\tan \brac{\pi t}} - \sin \brac{2 \pi f_0 t}}. \notag
\end{align}
This implies
\begin{align*}
& K^{(1)}\brac{f_0, t} \leq \pi\brac{ \frac{  \cos \brac{2 \pi f_0 t} - B^{\text{L}}_0 \brac{\gamma_0,f_0 t} }{\tan \brac{\pi t}} - \sin \brac{2 \pi f_0 t}},\\
& K^{(1)}\brac{f_0, t} \geq \pi\brac{ \frac{  \cos \brac{2 \pi f_0 t} -  B^{\text{U}}_0 \brac{\gamma_0,f_0 t} }{\tan \brac{\pi t}} - \sin \brac{2 \pi f_0 t}} .
\end{align*}
Applying~\eqref{eq:ineq_1overtan} then establishes
\begin{align}
B_{1}^{\text{L}} \brac{\gamma_0,\fct} & \leq K^{(1)}(\gamma_0 \,\fc, t)  \leq B_{1}^{\text{U}} \brac{\gamma_0,\fct}. \label{eq:B1_bound}
\end{align}
To bound the second derivative of the kernel we again leverage some trigonometric identities,
\begin{align}
K^{(2)}\brac{f_0, t} &= -\pi^2 K\brac{f_0, t}\brac{\brac{2f_0+1}^2-1} -\frac{2 \pi K^{(1)}\brac{f_0, t} }{\tan \brac{\pi t}} \label{eq:ineqK2} \\
&= -4\pi^2 K\brac{f_0, t}\brac{f_0^2+f_0}-\frac{2 \pi^2 }{\tan \brac{\pi t}}\brac{ \frac{  \cos \brac{2 \pi f_0 t} -  K\brac{f_0, t} }{\tan \brac{\pi t}} - \sin \brac{2 \pi f_0 t}} \notag\\
&= -4\pi^2 f_0^2 K\brac{f_0, t} -\frac{  2 \pi^2 \brac{\cos \brac{2 \pi f_0 t} -  K\brac{f_0, t}} }{\tan^2 \brac{\pi t}} + 4\pi^2 f_0 \brac{ \frac{\sin \brac{2 \pi f_0 t}}{2f_0\tan \brac{\pi t}} - K\brac{f_0, t}}. \notag
\end{align}
This implies
\begin{align*}
K^{(2)}\brac{f_0, t} & \leq -4\pi^2 f_0^2 B^{\text{L}}_0 \brac{\gamma_0,f_0 t} +\frac{  2 \pi^2 \brac{B^{\text{U}}_0 \brac{\gamma_0,f_0 t}-\cos \brac{2 \pi f_0 t} } }{\tan^2 \brac{\pi t}} \\
& \quad+ 4\pi^2 f_0\brac{ \frac{\sin \brac{2 \pi f_0 t}}{2f_0\tan \brac{\pi t}} - B^{\text{L}}_0 \brac{\gamma_0,f_0 t}},\\
K^{(2)}\brac{f_0, t} & \geq -4\pi^2 f_0^2 B^{\text{U}}_0 \brac{\gamma_0,f_0 t} +\frac{  2 \pi^2 \brac{B^{\text{L}}_0 \brac{\gamma_0,f_0 t}-\cos \brac{2 \pi f_0 t} } }{\tan^2 \brac{\pi t}} \\
& \quad+ 4\pi^2 f_0\brac{ \frac{\sin \brac{2 \pi f_0 t}}{2f_0\tan \brac{\pi t}} - B^{\text{U}}_0 \brac{\gamma_0,f_0 t}}.
\end{align*}
Applying~\eqref{eq:ineq_1overtan} yields
\begin{align}
B_{2}^{\text{L}} \brac{\gamma_0,\fct} & \leq K^{(2)}(\gamma_0 \,\fc, t)  \leq B_{2}^{\text{U}} \brac{\gamma_0,\fct}. \label{eq:B2_bound}
\end{align}

Using the fact that the derivative of $1/\tan\brac{u}$ is $-1-1/\tan^2\brac{u}$, we rewrite the third derivative of the kernel as
\begin{align}
K^{(3)}\brac{f_0,t} &= -\pi^2 K^{(1)}\brac{f_0,t}\brac{\brac{2f_0+1}^2-3}-\frac{2\pi }{\tan\brac{\pi t}} \brac{ K^{(2)}\brac{f_0,t} - \frac{\pi  K^{(1)}\brac{f_0,t} }{\tan \brac{\pi t}}}  \label{eq:ineqK3}  \\
&= -2\pi^2 K^{(1)}\brac{f_0,t}\brac{2 f_0^2+2f_0-1} -\frac{2\pi^2 }{\tan\brac{\pi t}} \Bigg( -4\pi K\brac{f_0, t}\brac{f_0^2+f_0}-\frac{3  K^{(1)}\brac{f_0, t} }{\tan \brac{\pi t}} \Bigg) \notag \\
&= -2\pi^2 K^{(1)}\brac{f_0,t}\brac{2 f_0^2+2f_0-1}-\frac{2\pi^3 }{\tan\brac{\pi t}} \Bigg( 4 f_0\brac{ \frac{\sin \brac{2 \pi f_0 t}}{2f_0\tan \brac{\pi t}} - K\brac{f_0,t}} + z(t) \Bigg).\notag
\end{align}
To bound $z(t)$ we perform some algebraic manipulations and then apply~\eqref{eq:ineq_sin_2},~\eqref{eq:ineq_cos_2},~\eqref{eq:ineq_sintan} and~\eqref{eq:ineq_sinsqtan},
\begin{align*}
z(t) & =  -4 f_0^2 K\brac{f_0,t} + \frac{ \sin \brac{2 \pi f_0 t}  }{\tan \brac{\pi t}}  - \frac{ 3  \brac{\cos \brac{2 \pi f_0 t} -  K\brac{f_0,t}} }{\tan^2\brac{\pi t}}\\
& = \frac{ \brac{-4 f_0^2 +2 f_0+1}\sin \brac{2 \pi f_0 t}  }{\brac{2 f_0+1}\tan \brac{\pi t}} -\frac{4 f_0^2 \cos \brac{2\pi f_0 t}}{2 f_0+1} \\
& \quad  - \frac{ 3  \brac{\brac{2 f_0+1}\cos \brac{2 \pi f_0 t} \sin\brac{\pi t} - \sin \brac{\brac{2 f_0+1} \pi t} } }{\brac{2 f_0+1} \sin\brac{\pi t}\tan^2\brac{\pi t}} \\
& = \frac{ \sin \brac{2 \pi f_0 t} \brac{ \brac{-4 f_0^2 +2 f_0+1}\sin\brac{\pi t}\tan\brac{\pi t}    + 3 \cos \brac{\pi t}}}{\brac{2 f_0+1} \sin\brac{\pi t}\tan^2\brac{\pi t}}\\
&\quad + \frac{ \cos \brac{2 \pi f_0 t} \brac{ -4 f_0^2 \sin\brac{\pi t} \tan^2 \brac{\pi t} - 6f_0 \sin\brac{\pi t}  }}{\brac{2 f_0+1} \sin\brac{\pi t}\tan^2\brac{\pi t}}\\
& = \frac{ \brac{2 \pi f_0 t - \frac{4\pi^3 f_0^3 t^3}{3} +\frac{4\pi^5 f_0^5 t^5}{15}-\epsilon_1} \brac{ \brac{-4 f_0^2 +2 f_0+1}\brac{ \pi^2 t^2 + \epsilon_2}   +  3 \brac{1-\frac{\pi^2 t^2}{2} + \epsilon_3 } } }{\brac{2 f_0+1} \sin\brac{\pi t}\tan^2\brac{\pi t}}\\
&\quad + \frac{ \brac{1-2\pi^2 f_0^2t^2 + \frac{2\pi^4 f_0^4 t^4}{3} - \epsilon_4 }\brac{ -4 f_0^2 \brac{\pi^3 t^3 +\epsilon_5} - 6f_0 \brac{\pi t - \frac{\pi^3 t^3}{6} +\epsilon_6 } }  }{\brac{2 f_0+1} \sin\brac{\pi t}\tan^2\brac{\pi t}}\\
& = \frac{4 \pi^5  f_0^5 t^5\brac{8+\frac{20}{f_0}-\frac{5}{8f_0^2} +z_0\brac{f_0,t,\epsilon_1,\epsilon_2,\epsilon_3,\epsilon_4,\epsilon_5,\epsilon_6} } -8 \pi^7  f_0^7 t^7\brac{ 2+ \frac{4}{f_0}-\frac{1}{f_0^2}  } }{15\brac{2 f_0+1} \sin\brac{\pi t}\tan^2\brac{\pi t}},
\end{align*}
where 
\begin{align*}
\begin{aligned}
\abs{ \epsilon_1} & \leq  \frac{8\pi^7 f_0^7 t^7 }{315}, \qquad & 
\abs{ \epsilon_2} & \leq  \pi^4 t^4, & \qquad 
\abs{  \epsilon_3} & \leq \frac{\pi^4 t^4}{24}, \\
\abs{ \epsilon_4} & \leq  \frac{4\pi^6 f_0^6 t^6}{45}, \qquad & 
\abs{\epsilon_5} & \leq 4 \pi^5 t^5, \qquad & 
\abs{\epsilon_6} & \leq \frac{ \pi^5 t^5}{120}. \end{aligned}
\end{align*}
If $\fct \leq \frac{1}{2}$ and $\fmin \geq 10^3$ some straightforward, yet tedious, computations yield
\begin{align*}
\abs{z_0\brac{f_0,t,\epsilon_1,\epsilon_2,\epsilon_3,\epsilon_4,\epsilon_5,\epsilon_6}} & \leq 0.17,
\end{align*}
which together with~\eqref{eq:ineq_1oversin} and~\eqref{eq:ineq_1overtan} implies\footnote{In this step we assume that $\gamma_0\geq 1/32 \fc$, which holds for the values of $\gamma_0$ we are interested in. Otherwise we would have to add a term $- 5 /8f_0^2$ in the first factor of the numerator of the lower bound. For the upper bound, we assume $\gamma_0\geq 1/2 \fc^2$. Similarly, this holds for the values of $\gamma_0$ that we consider, but we could add an extra term to the numerator of the right hand side if this were not the case.}
\begin{align*}
\frac{\brac{31.3 \pi^2  f_0^4 t^2 - 16 \pi^4  f_0^6 t^4 \brac{1+\frac{2}{\gamma_0 \fmin}}} \brac{1- \frac{\pi^2 f_0^2 t^2}{2 \gamma_0^2\fmin^2}}^2  }{15\brac{2+\frac{1}{\gamma_0 \fmin}} 
} \leq z\brac{t}   \leq \frac{2 \pi^2  f_0^4 t^2\brac{8.17+\frac{20}{\gamma_0 \fmin}}}{15 \brac{1-\frac{ \pi^2 f_0^2 t^2}{6\gamma_0^2 \fmin^2}}^2}.
\end{align*}
This establishes
\begin{align}
B_{3}^{\text{L}} \brac{\gamma_0,\fct} & \leq K^{(3)}(\gamma_0 \,\fc, t)  \leq B_{3}^{\text{U}} \brac{\gamma_0,\fct} \label{eq:B3_bound}
\end{align}
for $0 \leq \fct \leq \tau_3\brac{\gamma_0}$. For $\fct \geq \tau_3\brac{\gamma_0}$ the bounds are obtained by combining~\eqref{eq:ineqK3} with~\eqref{eq:ineq_1overtan}.

The bounds~\eqref{eq:B0_bound_near},~\eqref{eq:B0_bound_far},~\eqref{eq:B1_bound},~\eqref{eq:B2_bound} and~\eqref{eq:B3_bound} combined with the identities
\begin{align}
K^{ \brac{ 1} }_{\gamma}\brac{ t} & := \sum_{i=1}^p K^{ \brac{ 1} }\brac{\gamma_i \fc,t}   \prod_{j \neq i} K\brac{\gamma_j \fc,t}, \notag\\  
K^{ \brac{2} }_{\gamma}\brac{ t} & := \sum_{i=1}^p K^{ \brac{ 2} }\brac{\gamma_i \fc,t}   \prod_{j \neq i} K\brac{\gamma_j \fc,t} +\sum_{i=1}^p K^{ \brac{ 1} }\brac{\gamma_i \fc,t} \sum_{j\neq i} K^{ \brac{ 1} }\brac{\gamma_j \fc,t}  \prod_{k \notin \keys{i,j}}^p K\brac{\gamma_k \fc,t},\notag\\  
K^{ \brac{3} }_{\gamma}\brac{ t} & := \sum_{i=1}^p K^{ \brac{3} }\brac{\gamma_i \fc,t}   \prod_{j \neq i} K\brac{\gamma_j \fc,t}  +3\sum_{i=1}^p K^{ \brac{ 2} }\brac{\gamma_i \fc,t} \sum_{j\neq i} K^{ \brac{ 1} }\brac{\gamma_j \fc,t}  \prod_{k \notin \keys{i,j}} K\brac{\gamma_k \fc,t} \notag\\
& \quad +\sum_{i=1}^p K^{ \brac{ 1} }\brac{\gamma_i \fc,t} \sum_{j\neq i} K^{ \brac{ 1} }\brac{\gamma_j \fc,t} \sum_{k \notin \keys{i,j}} K^{ \brac{ 1} }\brac{\gamma_k \fc,t}  \prod_{l \notin \keys{i,j,k}} K\brac{\gamma_l \fc,t} \label{eq:derivatives}
\end{align}
yield
\begin{align}
B_{\gamma,\ell}^{\text{L}} \brac{\fct} & \leq K_{\gamma}^{(\ell)}( t )  \leq B_{\gamma,\ell}^{\text{U}} \brac{\fct} . \label{eq:KB_bound}
\end{align}
Finally, we control the deviation of the kernel and its derivatives from their values on a grid. Bernstein's inequality allows to bound the $\ell$th derivative of a trigonometric polynomial.
\begin{theorem}[Theorem 4 \cite{bernstein_inequality}]
Let $P$ be a trigonometric polynomial of order $N$ 
\begin{align}
P\brac{\theta} = \sum_{j=1}^{N} a_j \cos \brac{j\theta} +  b_j \sin \brac{j\theta}, \label{eq:trig_pol}
\end{align}
such that $\abs{P\brac{\theta}} \leq 1$ for all real $\theta$. Then, 
\begin{align*}
\abs{P^{\brac{\ell}}\brac{\theta}} \leq  N^{\ell} \quad \text{for all real } \theta.
\end{align*}
\end{theorem}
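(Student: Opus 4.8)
The plan is to prove the case $\ell = 1$, namely $\normInf{P^{(1)}} \leq N\,\normInf{P}$, and then obtain every $\ell \geq 1$ by induction. Since differentiation does not raise the order of a trigonometric polynomial, $P^{(\ell-1)}$ is again of order at most $N$; applying the case $\ell = 1$ to it gives $\normInf{P^{(\ell)}} = \normInf{(P^{(\ell-1)})^{(1)}} \leq N\,\normInf{P^{(\ell-1)}} \leq N^{\ell}\,\normInf{P}$ by the inductive hypothesis, and specializing to $\normInf{P} \leq 1$ yields the stated bound.

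For the base case I would invoke M.~Riesz's interpolation formula: for every trigonometric polynomial $P$ of order at most $N$,
\begin{align*}
P^{(1)}(\theta) = \sum_{k=1}^{2N} \lambda_k \, P(\theta + \xi_k), \qquad \xi_k := \frac{(2k-1)\pi}{2N}, \quad \lambda_k := \frac{(-1)^{k-1}}{4N \sin^2(\xi_k/2)}.
\end{align*}
This identity, which is the crux, can be verified by checking it on the exponential basis $\{ e^{i j \theta} : \abs{j} \leq N \}$ — i.e.\ on the finitely many scalar relations $\sum_{k=1}^{2N} \lambda_k e^{i j \xi_k} = i j$ for $\abs{j} \leq N$ — exploiting that the nodes $\xi_k$ are precisely the zeros of $\cos(N\theta)$ in a period.

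The remaining ingredient is the evaluation of $\sum_{k=1}^{2N}\abs{\lambda_k}$. Because $\sin^2(\xi_k/2) > 0$ for all $k$, this sum equals $\frac{1}{4N}\sum_{k=1}^{2N}\frac{1}{\sin^2\!\left((2k-1)\pi/(4N)\right)}$, and the classical identity $\sum_{k=1}^{m}\frac{1}{\sin^2\!\left((2k-1)\pi/(2m)\right)} = m^2$ with $m = 2N$ gives $\sum_{k=1}^{2N}\abs{\lambda_k} = N$; a consistency check is that applying the interpolation formula to $P(\theta) = \sin(N\theta)$, whose derivative at the origin equals $N$, forces the same value. The triangle inequality applied to the interpolation formula now gives, for every $\theta$, $\abs{P^{(1)}(\theta)} \leq \left(\sum_{k=1}^{2N}\abs{\lambda_k}\right)\sup_{\phi}\abs{P(\phi)} = N\,\normInf{P}$, which is the base case.

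The main obstacle is establishing the Riesz interpolation identity with the correct nodes and weights; once it is in hand everything else is a short computation. An alternative route that avoids the explicit formula is the classical comparison argument: assuming $\normInf{P} \leq 1$ but $P^{(1)}(\theta_0) > N$ at some point $\theta_0$, compare $P$ with $S(\theta) = \sin\!\left(N(\theta - \theta_0) + \beta\right)$, choosing $\beta$ so that $S(\theta_0) = P(\theta_0)$; the $2N$ interlacing maxima and minima of $S$ in a period force $S - P$ to vanish at least $2N$ times there, while $S^{(1)}(\theta_0) = N\cos\beta \leq N < P^{(1)}(\theta_0)$ yields one further zero, contradicting the fact that a nonzero trigonometric polynomial of order $N$ has at most $2N$ zeros per period. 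The delicate step in that route is the bookkeeping of zero multiplicities at the extremal nodes of $S$, which is why I would favor the interpolation-formula proof.
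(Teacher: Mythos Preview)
The paper does not prove this statement at all: it is quoted verbatim as ``Theorem~4'' from an external reference and then applied as a black box to bound $\abs{K_{\gamma}^{(\ell)}(t)}$. There is therefore no paper proof to compare against.

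Your proposal is a correct and standard self-contained proof. The reduction to $\ell=1$ by induction is immediate since differentiation preserves the order of a trigonometric polynomial, and the base case via the M.~Riesz interpolation formula with nodes $\xi_k=(2k-1)\pi/(2N)$ and weights $\lambda_k=(-1)^{k-1}/(4N\sin^2(\xi_k/2))$ is exactly the classical argument. The weight-sum evaluation $\sum_{k=1}^{2N}\abs{\lambda_k}=N$ via $\sum_{k=1}^{m}\csc^2\!\big((2k-1)\pi/(2m)\big)=m^2$ with $m=2N$ is correct, and your sanity check against $P(\theta)=\sin(N\theta)$ is apt, since that function attains equality in Bernstein's inequality. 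The alternative comparison argument you sketch is also a legitimate classical route, with the zero-counting caveat you already flag. Nothing is missing; this would stand as an independent proof where the paper simply cites one.
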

Applying the theorem to $K_{\gamma}( t )$, which can be written as $P\brac{2\pi t}$ for $P$ of the form~\eqref{eq:trig_pol} for $N=\fc$ and has magnitude bounded by 1, we obtain
\begin{align*}
\abs{K_{\gamma}^{(\ell)}( t ) } \leq \brac{2 \pi}^{\ell} \fc^{\ell}.
\end{align*}
As a result, for any $t$ such that $\abs{t-t^{\ast}}\leq \epsilon/\fc$
\begin{align*}
\abs{K_{\gamma}^{(\ell)}( t ) - K_{\gamma}^{(\ell)}( t^{\ast} )} \leq \brac{2 \pi}^{\ell+1} \fc^{\ell} \epsilon
\end{align*}
by the mean-value theorem. Combining this with~\eqref{eq:KB_bound} completes the proof.

\subsection{Proof of Lemma~\ref{lemma:tail_bounds}}
\label{proof:tail_bounds}
By~\eqref{eq:ineq_1oversin}
\begin{align}
\abs{K\brac{f_0,t}}& = \frac{\abs{\sin \brac{\brac{2 f_0+1} \pi t} }}{\brac{2 f_0+1}\abs{\sin \brac{\pi t}}}  \label{eq:tailbound0} \\
& \leq  \frac{1 }{2 \pi f_0 t  \brac{1-\frac{ \pi^2 f_0^2 t^2}{6\gamma_0^2\fmin^2} }} \notag.
\end{align}
As long as $ t < \frac{\sqrt{2} \fmin}{\pi \fc }$ this bound is decreasing because the derivative of the denominator is positive. 
By~\eqref{eq:ineqK1} and~\eqref{eq:ineq_1oversin}, we have
\begin{align}
\abs{K^{(1)}\brac{f_0, t} } & \leq \frac{ \pi  \brac{ \abs{\cos \brac{\brac{2f_0 + 1} \pi t}} + \abs{K\brac{f_0, t} }\abs{\cos \brac{\pi t}} } }{\abs{\sin \brac{\pi t}}} \label{eq:tailbound1} \\
& \leq \frac{1 +b_0\brac{\gamma_0,\fc t}}{ t \brac{1-\frac{ \pi^2 f_0^2 t^2}{6 \gamma_0^2 \fmin^2} }}. \notag
\end{align}
The bound is again decreasing if  $ t < \frac{\sqrt{2} \fmin}{\pi \fc }$ because the derivative of the denominator is positive and $b_0$ is decreasing. By~\eqref{eq:ineqK2} and~\eqref{eq:ineq_1overtan}
\begin{align}
\abs{K^{(2)}\brac{f_0, t} } & \leq \pi^2 \abs{K\brac{f_0, t}}\brac{\brac{2f_0+1}^2-1}+\frac{2 \pi \abs{K^{(1)}\brac{f_0, t} }}{\abs{\tan \brac{\pi t}}}  \label{eq:tailbound2}\\
& \leq  4 \pi^2 f_0^2 b_0\brac{\gamma_0,\fc t}\brac{1+\frac{1}{ \gamma_0 \fmin}}+\frac{2  b_1\brac{\gamma_0,\fc t} }{  t}. \notag
\end{align}
If $ t < \frac{\sqrt{2} \fmin}{\pi \fc }$ the bound is decreasing because so are $b_0$ and $b_1$. The same holds for the next bound, which results from combining~\eqref{eq:ineqK3} and~\eqref{eq:ineq_1overtan},
\begin{align}
\abs{K^{(3)}\brac{f_0, t} } &\leq  \pi^2 \abs{K^{(1)}\brac{f_0,t}}\brac{\brac{2f_0+1}^2-3}+\frac{2\pi }{\tan\brac{\pi t}} \brac{ \abs{K^{(2)}\brac{f_0,t}} + \frac{\pi  \abs{K^{(1)}\brac{f_0,t}} }{\abs{ \tan \brac{\pi t}}}}  \label{eq:tailbound3} \\
 &\leq 
 4 \pi^2 f_0^2  b_1 \brac{\gamma_0,\fc t} \brac{1+\frac{1}{ \gamma_0 \fmin} } + \frac{2  }{ t} \brac{ b_2 \brac{\gamma_0, \fc t}  + \frac{  b_1 \brac{\gamma_0,\fc t} }{ t}}. \notag
\end{align}
These inequalities together with~\eqref{eq:derivatives} establish the bounds in the lemma, which are decreasing in $\tau$ if $ \tau \leq 450 < \frac{\sqrt{2} \fmin}{\pi }$ (which implies $ t < \frac{\sqrt{2} \fmin}{\pi \fc }$ if $\fmin \geq 10^3$) because they are products of positive decreasing functions.
\subsection{Proof of Lemma~\ref{lemma:kernelsum}}
\label{proof:kernelsum}
We bound the first term of the decomposition
\begin{align*}
\sum_{t_j \in \,  T\setminus \{0\}} \abs{K^{\brac{\ell}}_{\gamma} \brac{t-t_j}} & =\sum_{t_j \in \, T \, \cap  \brac{0,\frac{1}{2}} } \abs{K^{\brac{\ell}}_{\gamma} \brac{t-t_j}}+\sum_{t_j \in \, T \, \cap  \left[-\frac{1}{2},0\right) } \abs{K^{\brac{\ell}}_{\gamma} \brac{t-t_j}} ;
\end{align*}
the argument for the second term is almost identical. 

Without loss of generality we assume that the elements of $T \, \cap  \brac{0,\frac{1}{2}} $ are ordered so that $0<t_1 < t_2<\dots<1/2$. 
By Corollary~\ref{cor:boundinf}
\begin{align*}
\abs{K^{\brac{\ell}}_{\gamma} \brac{t-t_j}} & \leq B_{\gamma,\ell}^{ \infty } \brac{u,\epsilon} 
\end{align*}
for any $u$ such that $ \fc\brac{t_j-t}-\epsilon \leq u \leq \fc\brac{t_j-t}+\epsilon$. Maximizing this quantity on an equispaced grid $\mathcal{G}_{j,\tau }$ with separation $\epsilon$ that covers the interval~$\sqbr{j\taumin-\tau,\brac{j+4}\taumin}$ for $\tau - \epsilon \leq \fc t \leq \tau  $ yields a bound that is valid for all values of $t_j$ such that $ t_j-t  \leq \brac{j+4} \Deltamin$, since $ t_j-t \geq j \Deltamin -t  $ due to the minimum-separation condition. If $t_j-t > \brac{j+4} \Deltamin $, then by Lemma~\ref{lemma:tail_bounds}
\begin{align*}
 \abs{K^{\brac{\ell}}_{\gamma} \brac{t-t_j}} & \leq  b_{\gamma,\ell} \brac{\brac{j+4} \taumin } .
\end{align*}
Taking the maximum of the two bounds allows to control the contribution of the kernel (or the $\ell$th derivative of the kernel) centered at $t_j$. We take advantage of this for $1 \leq j \leq \jnear$ and then apply the coarser bound
\begin{align*}
 \abs{K^{\brac{\ell}}_{\gamma} \brac{t-t_j}} & \leq  b_{\gamma,\ell} \brac{ \brac{j-1/2}  \taumin } 
\end{align*}
 for $\jnearpone \leq j \leq \jzero$. This bound follows from Lemma~\ref{lemma:tail_bounds} and the fact that $t_j -t \geq \brac{j-1/2} \Deltamin $ by the minimum-separation condition. The remaining terms can be controlled using the following lemma, which we prove in Section~\ref{proof:tail_bounds_inf}.\begin{lemma}
\label{lemma:tail_bounds_inf}
For $\fc \geq \minfc$, $\ell \in {\keys{0,1,2,3}}$, $\gamma=\sqbr{\gammaOne,\gammaTwo,\gammaThree}^T$ and $ 80 / \fc  \leq \abs{ t } \leq 1/2$
\begin{align*}
\abs{ K^{\brac{\ell}}_{\gamma}\brac{t}} \leq b_{\gamma,\ell}^{\infty}\brac{t} & := \frac{\brac{2\pi \fc}^{\ell}}{ \prod_{i=1}^3 \gamma_i} \brac{\frac{1.1}{4 \fc t }}^3. 
\end{align*}
\end{lemma}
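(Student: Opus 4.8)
The plan is to bound $K_\gamma^{\brac{\ell}}$ pointwise by first controlling each Dirichlet factor $K\brac{\gamma_i\fc,\cdot}$ and its derivatives away from the origin, and then reassembling the pieces via the product formula~\eqref{eq:derivatives}. Since $K_\gamma$ is even, $\abs{K_\gamma^{\brac{\ell}}\brac{-t}}=\abs{K_\gamma^{\brac{\ell}}\brac{t}}$ and the target bound depends only on $\abs{t}$, so I would assume $t\in\sqbr{80/\fc,\,1/2}$. Throughout I would use the elementary inequality $\sin\brac{\pi t}\geq 2t$ on $\sqbr{0,1/2}$, which holds because $\sin\brac{\pi t}-2t$ is concave there and vanishes at both endpoints.

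\emph{Step 1 (one Dirichlet factor).} Write $K\brac{f,t}=\brac{2f+1}^{-1}\sin\brac{\brac{2f+1}\pi t}\csc\brac{\pi t}$. Differentiating $\csc$ and bounding $\abs{\cot x}\leq\abs{\csc x}$ gives $\bigl|\tfrac{\mathrm{d}^{j}}{\mathrm{d}t^{j}}\csc\brac{\pi t}\bigr|\leq c_j\,\pi^{j}\,\abs{\sin\brac{\pi t}}^{-j-1}$ with $c_0=c_1=1$, $c_2=2$, $c_3=6$, while $\bigl|\tfrac{\mathrm{d}^{k}}{\mathrm{d}t^{k}}\sin\brac{\brac{2f+1}\pi t}\bigr|\leq\brac{\brac{2f+1}\pi}^{k}$. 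Applying Leibniz's rule, pulling out the top-order term $k=m$, and using $\sin\brac{\pi t}\geq 2t$ together with $\brac{2\gamma_i\fc+1}\,2t\geq 320\,\gammaOne\geq 79$ whenever $t\geq 80/\fc$, $\fc\geq\minfc$ and $\gamma_i\geq\gamma_1=\gammaOne$, the lower-order terms contribute a factor at most $\sum_{j=0}^{3}\binom{3}{j}c_j\,79^{-j}\leq 1.04$. Combining this with $\brac{2\gamma_i\fc+1}^{m-1}\leq\brac{2\gamma_i\fc}^{m-1}\bigl(1+\tfrac{1}{2\gamma_i\fc}\bigr)^{m-1}$ and $\tfrac{1}{2\gamma_i\fc}\leq\tfrac{1}{2\,\gammaOne\,\minfc}$, I would obtain, for every $m\in\keys{0,1,2,3}$,
\[
\abs{K^{\brac{m}}\brac{\gamma_i\fc,t}}\;\leq\;\frac{1.05\,\brac{2\pi\gamma_i\fc}^{m}}{4\gamma_i\fc\,t}.
\]

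\emph{Step 2 (assembling the product).} Expanding $K_\gamma=\prod_{i=1}^{3}K\brac{\gamma_i\fc,\cdot}$ through~\eqref{eq:derivatives} as $K_\gamma^{\brac{\ell}}=\sum_{\ell_1+\ell_2+\ell_3=\ell}\tfrac{\ell!}{\ell_1!\ell_2!\ell_3!}\prod_{i=1}^{3}K^{\brac{\ell_i}}\brac{\gamma_i\fc,\cdot}$ and substituting Step 1, the denominators combine to $\brac{4\fc t}^{3}\prod_{i}\gamma_i$, the constants give $\brac{1.05}^{3}$, and $\prod_{i}\brac{2\pi\gamma_i\fc}^{\ell_i}=\brac{2\pi\fc}^{\ell}\prod_{i}\gamma_i^{\ell_i}$. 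Since $\gammaOne+\gammaTwo+\gammaThree=1$, the multinomial theorem gives $\sum_{\ell_1+\ell_2+\ell_3=\ell}\tfrac{\ell!}{\ell_1!\ell_2!\ell_3!}\gamma_1^{\ell_1}\gamma_2^{\ell_2}\gamma_3^{\ell_3}=\brac{\gamma_1+\gamma_2+\gamma_3}^{\ell}=1$, hence
\[
\abs{K_\gamma^{\brac{\ell}}\brac{t}}\;\leq\;\frac{\brac{1.05}^{3}\brac{2\pi\fc}^{\ell}}{\brac{4\fc t}^{3}\prod_{i=1}^{3}\gamma_i}\;\leq\;\frac{\brac{2\pi\fc}^{\ell}}{\prod_{i=1}^{3}\gamma_i}\brac{\frac{1.1}{4\fc t}}^{3}\;=\;b_{\gamma,\ell}^{\infty}\brac{t},
\]
using $\brac{1.05}^{3}<\brac{1.1}^{3}$, which is the claim.

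\emph{Expected main obstacle.} The argument is essentially bookkeeping; the one delicate point is the numerical slack in Step 1 --- I must verify that the constants $c_j$ governing the derivatives of $\csc$ and the resulting geometric-type tail leave the per-factor constant safely below $1.1$ even in the worst case $t=80/\fc$, which is exactly where the hypotheses $t\geq 80/\fc$, $\fc\geq\minfc$ and the specific smallest ratio $\gamma_1=\gammaOne$ enter. The structural reason the clean $\brac{2\pi\fc}^{\ell}$ scaling survives the product (rather than picking up a spurious $3^{\ell}$ from the multinomial coefficients) is the normalization $\sum_i\gamma_i=1$, which collapses the multinomial sum to $1$.
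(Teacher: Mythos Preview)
Your proof is correct and follows the same overall outline as the paper --- bound each Dirichlet factor via $\sin(\pi t)\geq 2t$, then assemble through the product rule --- but your execution is cleaner in two respects. For the per-factor bound, the paper reuses its earlier recursive identities \eqref{eq:ineqK1}--\eqref{eq:ineqK3} and the tail estimates \eqref{eq:tailbound0}--\eqref{eq:tailbound3}, simplifying them under $t\geq 80/\fc$ to reach exactly the same form $\abs{K^{(\ell)}(f_0,t)}\leq 1.1\,(2\pi f_0)^{\ell}/(4f_0 t)$; your direct Leibniz expansion of $\sin\brac{(2f+1)\pi t}\csc(\pi t)$ is more self-contained and does not depend on those earlier lemmas. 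More notably, in the assembly step the paper writes out \eqref{eq:derivatives} explicitly and then proves three separate algebraic identities (the $\ell=3$ one occupying several lines of manipulation) to show that each combinatorial sum collapses to $1/\prod_i\gamma_i$; your recognition that, after factoring, the residual sum is precisely $\sum_{\ell_1+\ell_2+\ell_3=\ell}\tfrac{\ell!}{\ell_1!\ell_2!\ell_3!}\gamma_1^{\ell_1}\gamma_2^{\ell_2}\gamma_3^{\ell_3}=(\gamma_1+\gamma_2+\gamma_3)^{\ell}=1$ by the multinomial theorem both reveals the structural reason behind those identities and makes the extension to $p\neq 3$ factors (which the paper notes parenthetically but does not argue) immediate.
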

By the lemma,
\begin{align}
 \sum_{t_j \in \, T \, \cap  \brac{\jzero \Deltamin,\frac{1}{2}} } \abs{K^{\brac{\ell}}_{\gamma} \brac{t-t_j}} & \leq \sum_{k=\jzero}^{\infty} b_{\gamma,\ell}^{\infty} \brac{k \taumin} \notag\\ 
 & = \sum_{k=1}^{\infty} b_{\gamma,\ell}^{\infty} \brac{k\taumin} - \sum_{k=1}^{\jzero} b_{\gamma,\ell}^{\infty} \brac{ k\taumin} \notag\\ 
 & = \frac{\brac{2\pi \fc}^{\ell}}{ \prod_{i=1}^p \gamma_i} \brac{\frac{1.1}{4 \taumin }}^p \brac{\sum_{k=1}^{\infty} \frac{1}{k^p}-\sum_{k=1}^{\jzero} \frac{1}{k^p}} :=  C_{\ell}. \label{eq:Cl}
\end{align}
The infinite series $\sum_{k=1}^{\infty} k^{-p}$ is equal to the Riemann zeta function $\zeta\brac{p}$, so we can easily compute $C_{\ell}$ for the desired value of $\gamma$ and conclude that
\begin{align*}
\sum_{t_j \in \, T \, \cap  \brac{0,\frac{1}{2}} } \abs{K^{\brac{\ell}}_{\gamma} \brac{t-t_j}} & \leq H\brac{\tau}.
\end{align*}

\subsection{Proof of Lemma~\ref{lemma:tail_bounds_inf}}
\label{proof:tail_bounds_inf}
In this proof $p:=3$ (the lemma also holds if we convolve a different number of Dirichlet kernels). To obtain bounds that are valid for large $t$ we use a simple lower bound on the sine function that holds between 0 and $\frac{\pi}{2}$
\begin{align*} 
2t \leq \sin \brac{\pi t}. 
\end{align*}
Combining it with~\eqref{eq:ineq_1oversin} and the bounds~\eqref{eq:tailbound0},~\eqref{eq:tailbound1}, \eqref{eq:tailbound2} and~\eqref{eq:tailbound3} on the magnitude of the kernel and its derivatives from Section~\ref{proof:tail_bounds} yields
\begin{align*}
\abs{K\brac{f_0,t}}& \leq \frac{1}{\brac{2 f_0+1} 2t} \leq \frac{1}{4f_0t} , \\
\abs{K^{(1)}\brac{f_0, t} } & \leq \frac{\pi}{2t} \brac{1+\frac{1}{4f_0t}}  , \\
\abs{K^{(2)}\brac{f_0, t} } & \leq  \frac{ \pi^2 f_0}{t}\brac{1+\frac{1}{ \gamma_0 \fmin}}+\frac{\pi}{t^2}\brac{1+\frac{1}{4 f_0 t}} \leq  \frac{ \pi^2 f_0}{t}\brac{1+\frac{1}{ \gamma_0 \fmin} + \frac{1+\frac{1}{4 f_0 t}}{\pi f_0 t}} , \\
\abs{K^{(3)}\brac{f_0, t} } & \leq \frac{2 \pi^3 f_0^2}{t} \Bigg( \brac{1 + \frac{1}{4 f_0 t_0}}\brac{1+\frac{1}{\gamma_0 \fmin}} + \frac{1}{\pi f_0 t} \brac{1+\frac{1}{ \gamma_0 \fmin} + \frac{1+\frac{1}{4 f_0 t}}{\pi f_0 t}}\\
& \quad + \frac{1}{2\pi^2 f_0^2 t^2} \brac{1+\frac{1}{4f_0 t}} \Bigg).
\end{align*}
If $\gamma_0 \geq 0.25 $ then $1/f_0 t \leq 0.05$ for $ t \geq \frac{ 80 }{ \fc }$, which allows to coarsely summarize these inequalities as
\begin{align*}
\abs{K^{(\ell)}\brac{f_0, t} } & \leq \frac{1.1 \, 2^{\ell-2}\pi^\ell f_0^{\ell-1}}{t}
\end{align*}
for $j \in {0,1,2,3}$ and $\fmin \geq 10^3$. Together with~\eqref{eq:derivatives} 
this implies
\begin{align*}
\abs{K_{\gamma}\brac{ t}} & \leq \brac{\frac{1.1}{4 \fc t }}^p \prod_{i=1}^p\frac{1}{ \gamma_i} ,\\
\abs{K^{ \brac{ 1} }_{\gamma}\brac{ t} } & \leq 2\pi \fc \brac{\frac{1.1}{4 \fc t }}^p \sum_{i=1}^p\prod_{j=1,j\neq i}^{p} \frac{1}{\gamma_j},\\
\abs{K^{ \brac{2} }_{\gamma}\brac{ t} }& \leq  \brac{2\pi \fc}^2 \brac{\frac{1.1}{4 \fc t }}^p  \sum_{i=1}^p \brac{   \frac{\gamma_i}{ \prod_{j=1,j\neq i}^p  \gamma_j} + \sum_{j=1,j\neq i}^p  \prod_{k=1,k\neq i,j}^p \frac{1}{ \gamma_k}} ,\\
\abs{K^{ \brac{ 3} }_{\gamma}\brac{ t} } & \leq \brac{2\pi \fc}^3  \brac{\frac{1.1}{4 \fc t }}^p  \sum_{i=1}^p  \brac{   \frac{\gamma_i^2}{\prod_{j=1,j\neq i}^p \gamma_j} + \sum_{j=1,j\neq i}^p \frac{3\gamma_i}{\prod_{k=1,k\neq i,j}^p \gamma_k}+ \sum_{k=1,k\neq i,j}^p \prod_{m=1,m\neq i,j,k}^p\frac{1}{ \gamma_m} }.
\end{align*}
Since by definition $\sum_{j=1}^p \gamma_j=1$, the following identities complete the proof,
\begin{align*}
\sum_{i=1}^p\prod_{j=1,j\neq i}^{p} \frac{1}{\gamma_j}  & = \sum_{i=1}^p \frac{\gamma_i}{\prod_{j=1}^{p} \gamma_j}  =\frac{1}{\prod_{j=1}^{p} \gamma_j},
\end{align*}
\begin{align*}
\sum_{i=1}^p \brac{ \frac{\gamma_i}{ \prod_{j=1,j\neq i}^p  \gamma_j} +  \sum_{j=1,j\neq i}^p \prod_{k=1,k\neq i,j}^p \frac{1}{ \gamma_k} } & = \frac{1}{\prod_{j=1}^{p} \gamma_j} \sum_{i=1}^p \brac{ \gamma_i^2 + \gamma_i \sum_{j=1,j\neq i}^p \gamma_j } \\
&=  \frac{1}{\prod_{j=1}^{p} \gamma_j}\sum_{i=1}^p \gamma_i^2 + \gamma_i \brac{1-\gamma_i}=\frac{1}{\prod_{j=1}^{p} \gamma_j},\\
\end{align*}
\begin{align*}
&\sum_{i=1}^p \brac{ \frac{\gamma_i^2}{\prod_{j=1,j\neq i}^p \gamma_j} + \sum_{j=1,j\neq i}^p \brac{\frac{3\gamma_i}{\prod_{k=1,k\neq i,j}^p \gamma_k}+ \sum_{k=1,k\neq i,j}^p \prod_{m=1,m\neq i,j,k}^p\frac{1}{ \gamma_m}} } \\
&\qquad \qquad  = \sum_{i=1}^p  \brac{ \frac{\gamma_i^2}{\prod_{j=1,j\neq i}^p \gamma_j} + \sum_{j=1,j\neq i}^p  \brac{\frac{2\gamma_i+\gamma_j}{\prod_{k=1,k\neq i,j}^p \gamma_k}+ \sum_{k=1,k\neq i,j}^p \prod_{m=1,m\neq i,j,k}^p\frac{1}{ \gamma_m}}}  \\
&\qquad \qquad  = \frac{1}{\prod_{j=1}^{p} \gamma_j} \sum_{i=1}^p  \brac{ \gamma_i^3 + \sum_{j=1,j\neq i}^p  \brac{ 2\gamma_i^2 \gamma_j+\gamma_i \gamma_j^2 +\sum_{k=1,k\neq i,j}^p \gamma_i \gamma_j \gamma_k }}\\
&\qquad \qquad  = \frac{1}{\prod_{j=1}^{p} \gamma_j} \sum_{i=1}^p  \brac{ \gamma_i^3 + \brac{\gamma_i^2+\gamma_i} \sum_{j=1,j\neq i}^p   \gamma_j }=\frac{1}{\prod_{j=1}^{p} \gamma_j}.
\end{align*}

\subsection{Proof of Lemma~\ref{lemma:bound_2der}}
\label{proof:bound_2der}
By the fundamental theorem of calculus, if $t'$ is such that $ j \epsilon \leq t' \leq \brac{j +1}\epsilon$ for some integer $j$
\begin{align*}
 F'\brac{ t' }& = F'\brac{0} + \int_{0}^{t'} F''\brac{u} \text{d}u\\
 &   =  \sum_{l=0}^{j-1} \int_{l \epsilon}^{\brac{l+1}\epsilon } F''\brac{u} \text{d}u +  \int_{ j \epsilon }^{ t'  } F''\brac{u} \text{d}u. 
\end{align*}
This implies 
\begin{align}
F'\brac{ t' }& \leq \epsilon \brac{\sum_{l=0}^{j-1}  F_{2,l} +\mathbbm{1}_{ F_{2,j}>0} F_{2,j}} . \label{eq:boundFder}
\end{align}
Now, recall that $ k \epsilon \leq t \leq \brac{k \ +1}\epsilon$. By the fundamental theorem of calculus, the assumptions on F and~\eqref{eq:boundFder},
\begin{align*}
 F\brac{ t } & =  F\brac{ 0 }  + \int_{0}^{t} F'\brac{u} \text{d}u\\
 & = 1 + \sum_{j=0}^{k-1} \int_{j \epsilon}^{\brac{j+1}\epsilon } F'\brac{u} \text{d}u +  \int_{ k \epsilon }^{ t  } F'\brac{u} \text{d}u\\
 & = 1 + \epsilon^2\brac{  \sum_{j=0}^{ k-1 }  \brac{\sum_{l=1}^{j-1}  F_{2,l} +\mathbbm{1}_{ F_{2,j}>0} F_{2,j} }  +    \mathbbm{1}_{F_{2}>0}F_{2} }. 
\end{align*}

\end{document}